\DeclarePairedDelimiter{\ceil}{\lceil}{\rceil}
\newtheorem{theorem}{Theorem}[section]	
\newtheorem{corollary}[theorem]{Corollary}
\newtheorem{lemma}[theorem]{Lemma}
\newtheorem{proposition}[theorem]{Proposition}
\theoremstyle{definition}
\newtheorem{definition}[theorem]{Definition}
\theoremstyle{remark}
\newtheorem{remark}[theorem]{Remark}
\newtheorem{example}[theorem]{Example}
\numberwithin{equation}{section}
\newcommand{\form}{\mathfrak q}
\newcommand{\gr}{\Psi}
\newcommand{\smin}{\sigma_-}
\newcommand{\smax}{\sigma_+}
\newcommand{\cB}{{\mathcal{B}}}
\newcommand{\cK}{{\mathcal{K}}}
\newcommand{\cL}{{\mathcal{L}}}
\newcommand{\cN}{{\mathcal{N}}}
\newcommand{\cM}{{\mathcal{M}}}
\newcommand{\cF}{{\mathcal{F}}}
\newcommand{\cH}{{\mathcal{H}}}
\newcommand{\cV}{{\mathcal{V}}}
\newcommand{\C}{{\mathbb{C}}}
\newcommand{\R}{{\mathbb{R}}}
\newcommand{\Z}{{\mathbb{Z}}}
\newcommand{\N}{{\mathbb{N}}}
\newcommand{\bbR}{{\mathbb{R}}}
\newcommand{\bbC}{{\mathbb{C}}}
\newcommand\zbar{\overline{z}}
\newcommand{\ran}{\text{\rm{ran}}}
\newcommand{\Lframe}[2]{{\begin{pmatrix}  #1 \\ #2 \end{pmatrix}}}
\newcommand\term[1]{\emph{#1}}  
\newcommand{\spec}{\operatorname{Spec}}
\newcommand\spess{\spec_{\mathrm{ess}}}
\renewcommand{\det}{\operatorname{det}}
\newcommand{\dom}{\operatorname{dom}}
\newcommand{\codim}{\operatorname{codim}}
\newcommand{\tr}{\mathbf{\Gamma}}
\newcommand{\Ran}{\operatorname{ran}}
\DeclareMathOperator{\GL}{GL}
\newcommand{\minop}{S}
\newcommand{\lb}{\label}
\renewcommand{\ker}{\operatorname{ker}}
\newcommand{\Lagr}{\Lambda}  
\DeclareMathOperator\iD{\iota}
\DeclareMathOperator{\mul}{mul}
\DeclareMathOperator{\rank}{rank}
\DeclareMathOperator{\Mas}{Mas}
\DeclareMathOperator{\Span}{span}
\DeclareMathOperator{\deff}{def}
\date{\today}
\begin{document}

\title[The Duistermaat index and eigenvalue interlacing]
{The Duistermaat index and eigenvalue interlacing for self-adjoint
  extensions of a symmetric operator}

\author[G. Berkolaiko]{Gregory Berkolaiko}
\author[G. Cox]{Graham Cox}
\author[Y. Latushkin]{Yuri Latushkin}
\author[S. Sukhtaiev]{Selim Sukhtaiev}

\begin{abstract}
  Eigenvalue interlacing is a useful tool in linear algebra and
  spectral analysis.  In its simplest form, the interlacing inequality
  states that a rank-one positive perturbation shifts each eigenvalue
  up, but not further than the next unperturbed eigenvalue.  For
  different types of perturbations this idea is known as Weyl
  interlacing, Cauchy interlacing, Dirichlet--Neumann bracketing and
  so on.

  We prove a sharp version of the interlacing inequalities for
  ``finite-dimensional perturbations in boundary conditions,''
  expressed as bounds on the spectral shift between two self-adjoint
  extensions of a fixed semibounded symmetric operator with finite and
  equal defect numbers.  The bounds are given in terms of the
  Duistermaat index, a topological invariant describing the relative
  position of three Lagrangian planes in a symplectic space.  Two of
  the Lagrangian planes describe the self-adjoint extensions being
  compared, while the third corresponds to the Friedrichs extension,
  which acts as a reference point.

  Along the way several auxiliary results are established, including
  one-sided continuity properties of the Duistermaat index,
  smoothness of the Cauchy data space without unique
  continuation-type assumptions, and a formula for the Morse index 
  of an extension of a non-negative symmetric operator.
\end{abstract}


\maketitle

\section{Introduction}\lb{intro}

\subsection{Background and motivation}
Let $H_1$ and $H_2$ be two $N\times N$ Hermitian matrices, with 
eigenvalues $\lambda_1(H_j) \leq \lambda_2(H_j) \leq \cdots \leq \lambda_N(H_j)$.
The following inequalities \cite[Cor.~4.3.3]{HornJohnson} are often
referred to as ``Weyl Interlacing'':
\begin{equation}
  \label{eq:WeylInterlacingMatrices}
  \lambda_{k-\smin}(H_1)
  \leq
  \lambda_k(H_2)
  \leq
  \lambda_{k+\smax}(H_1),
\end{equation}
where $\smax$ and $\smin$ are given by the number of positive and
negative eigenvalues of the perturbation $H_2-H_1$.  The fact that
$\smin+\smax$ gives the rank of the perturbation from $H_1$ to $H_2$
suggests that the bounds in (\ref{eq:WeylInterlacingMatrices}) are
optimal.  Cauchy Interlacing
\cite[Cor.~4.3.17]{HornJohnson}, where $H_2$ is
obtained from $H_1$ by removing rows and columns, can also be put in
the form \eqref{eq:WeylInterlacingMatrices} by adjusting $\smax$ to
include the number of removed rows and columns.

The principal aim of this work is to establish inequality
\eqref{eq:WeylInterlacingMatrices} for any two self-adjoint extensions
of a bounded from below symmetric operator $\minop$ with finite and equal
defect numbers.  The results are directly applicable to differential
operators with finite-dimensional changes in boundary conditions, in
settings such as linear Hamiltonian systems
\cite{HowJunKwo_jdde18,HowSuk_dcds20}, quantum graphs
\cite{BanBerRazSmi_cmp12,BerKuc_incol12,BerKenKurMug_tams19}, \v{S}eba
billiards
\cite{Seb_prl90,BogGirSch_pre02,RahFis_n02,KeaMarWin_jmp10,KurUeb_gafa14,KurUeb_imrn23},
and manifolds with conical singularities \cite{GilKraMen_cjm07}; some
concrete examples are discussed in \Cref{sec:examples}.  Unlike the
matrix case discussed above, the perturbations here are not additive
(since one cannot take the difference of two unbounded operators with
different domains), so it is not immediately clear how to define such
quantities as the signature $(\smin,\smax)$ of the perturbation.

Our results characterize the shifts
in the interlacing, $\smin$ and $\smax$, in terms of the relative
topological position of three pieces of data: the two Lagrangian
planes that describe the self-adjoint extensions of interest and a third Lagrangian
plane describing the Friedrichs extension.
This topological position is expressed via the
Duistermaat triple index \cite{BarOffPorWu_mz21, Dui_am76,How_jmaa21,
  ZhoWuZhu_fmc18}, an integer-valued symplectic invariant whose
definition is recalled and supplemented with several new computational
tools in \Cref{sec:index_formulas}; see also \Cref{sec:examples} for
examples of computation.

For illustrative purposes, we present two proofs of our main result:
via Maslov-type index theory
\cite{Arn_faa67,BooZhu_memAMS,CapLeeMil_cpam94,RobSal_t93} and via the
Krein resolvent formula \cite{BehHasDeS_boundarytriples, MR18341,
  MR482343, MR1628437, MR203474, Schmudgen_unboundedSAO}, in
\Cref{sec:proof1,sec:proof_krein_resolvent}, correspondingly.  Along the
way, we sharpen existing techniques, in particular to avoid relying on
unique continuation-type conditions; see \Cref{sec:CauchyDataSpace}.

\subsection{Main results}
Let $\cH$ be a separable Hilbert space and let $\minop$ be a closed,
bounded from below, densely defined symmetric operator with finite and
equal defect numbers $(n,n)$.  Under these assumptions, all
self-adjoint extensions $H$ of $\minop$ have the same essential
spectrum\footnote{This coincides with $\sigma_{e3}(S)$ in
  \cite[Chap. IX]{EdmundsEvans_spectral}, so the first equality in
  \eqref{ess:same} follows from
  \cite[Cor. IX.4.2.]{EdmundsEvans_spectral}},
\begin{equation}
  \label{ess:same}
  \spess(H) = \spess(\minop)
  := \{z \in \bbC : \minop - z \text{ is not Fredholm} \}.
\end{equation}
We will be describing the self-adjoint extensions of $\minop$ in terms of a
boundary triplet $(\cK, \Gamma_0, \Gamma_1)$, see
\cite[Sec.~14.2]{Schmudgen_unboundedSAO}.  Here $\cK$ is an
$n$-dimensional\footnote{A boundary triplet exists if and only if the defect numbers of $\minop$ are equal, in which case $\dim \cK$ equals this common value, see \cite[Prop.~14.5]{Schmudgen_unboundedSAO}.} complex Hilbert space and the linear mappings $\Gamma_0, \Gamma_1 \colon \dom(\minop^*) \to \cK$ are such
that the operator
\begin{equation}
  \label{eq:trace_op}
	\tr \colon \dom(\minop^*) \to \cK\oplus\cK, \qquad  \tr f := (\Gamma_0f, \Gamma_1f)
\end{equation}
is surjective and the \emph{abstract Green's identity}
\begin{equation}
  \label{eq:Greens_identity}
  \left<f, \minop^* g\right>_\cH - \left<\minop^*f, g\right>_\cH
  = \left<\Gamma_0 f, \Gamma_1 g\right>_\cK - \left<\Gamma_1 f, \Gamma_0 g\right>_\cK
\end{equation}
holds for all $f,g \in \dom(\minop^*)$.

We will view $\cK \oplus \cK$ as
a complex symplectic space with the symplectic form
\begin{equation}
  \label{eq:sympl_form}
  \omega(u, v) := \left<u_0, v_1\right>_\cK
  - \left<u_1, v_0\right>_\cK,
  \quad
  u = (u_0, u_1) \text{ and }
 v = (v_0, v_1) \in \cK \oplus \cK,
\end{equation}
in terms of which the right-hand side of Green's identity
\eqref{eq:Greens_identity} is $\omega(\tr f, \tr g)$. Self-adjoint
extensions $H$ of $\minop$ are in one-to-one correspondence with
Lagrangian planes\footnote{We refer to
  Lagrangian subspaces as ``planes" regardless of their dimension. For
  a review of symplectic linear algebra in 
  complex vector spaces, we refer the reader to
  \cite{EveMar_tams99,Har_jpa00} or \cite[App.~D]{BerKuc_graphs}.}
$\cL$ in $\cK\oplus\cK$ via
\begin{equation}
  \label{eq:domain_extension}
  \dom(H) := \big\{f \in \dom(\minop^*) : (\Gamma_0 f, \Gamma_1 f)
  \in \cL\big\}.
\end{equation}
Heuristically this says that one must impose $\dim \cL = n$ ``boundary conditions" on $\minop^*$ to obtain 
a self-adjoint operator.

To state an analogue of inequalities
\eqref{eq:WeylInterlacingMatrices} for two self-adjoint extensions
$H_1$ and $H_2$ of $\minop$, in terms of the corresponding Lagrangian planes
$\cL_1$ and $\cL_2$, we will need a third
Lagrangian plane $\cF$, which corresponds to the Friedrichs extension
$H_F$ of $\minop$, namely
\begin{equation}
  \label{eq:Friedrichs_plane}
  \cF := \big\{ (\Gamma_0 f, \Gamma_1 f) : f \in \dom(H_F) \big\}.
\end{equation}
It is common in applications to choose the triple
$(\cK, \Gamma_0, \Gamma_1)$ so that the domain of the
Friedrichs extension is $\ker \Gamma_0$ and thus
$\cF$ coincides with the \emph{vertical subspace} $\cV := 0 \oplus \cK$. 
Some of the results below take a
simplified form under the assumption that $\cF = \cV$. 

Since $H$ is a self-adjoint extension, $\spec(H) \backslash \spess(H)$
consists of isolated eigenvalues of finite multiplicity.  For any
interval $I$ whose closure is disjoint from $\spess(H)$ we
define\footnote{The requirement that $\overline I$ lies outside the
  essential spectrum guarantees $N\big(H; I \big)$ is finite.}  the
\term{counting function}
\begin{equation}
  \label{eq:counting_def}
  N\big(H; I \big) := \sum_{\lambda\in I} \dim \ker (H-\lambda),
\end{equation}
i.e., the number of eigenvalues of $H$ in $I$ counted with
multiplicity.  We will abbreviate
\begin{align}
  \label{eq:n_plus_minus_def}
  n_-(H) := N\big(H; (-\infty,0)\big), \qquad
  n_0(H) := \dim \ker H
\end{align}
when these are well defined. The number 
$n_-(H)$ is called the \term{Morse index} of $H$.  If
$\lambda \in \bbR$ is below the essential spectrum, we also define the
\term{spectral shift}
\begin{equation}
  \label{eq:spec_shift_def}
  \sigma(H_1, H_2; \lambda) :=
  N\big(H_1;(-\infty,\lambda]\big) - N\big(H_2; (-\infty,\lambda]\big)
\end{equation}
between self-adjoint extensions $H_1$ and $H_2$.  Finally, assuming
$S$ is semibounded from below, we will label the eigenvalues of $H$
below the essential spectrum by
$\lambda_1(H) \leq \lambda_2(H) \leq \cdots$, i.e., in increasing
order, repeated according to their multiplicity.

We are now ready to formulate our main result.

\begin{theorem}
  \label{thm:main}
  Suppose $\minop$ is a closed, bounded from below, densely defined
  symmetric operator with finite and equal defect numbers and a
  boundary triplet $(\cK, \Gamma_0, \Gamma_1)$.  Let $\cL_1$, $\cL_2$
  and $\cF$ be Lagrangian planes in $(\cK\oplus\cK, \omega)$
  corresponding to self-adjoint extensions $H_1$, $H_2$ and the
  Friedrichs extension $H_F$ of $\minop$.  Define
  \begin{equation}
    \label{eq:sminmax}
    \smin := \iD(\cL_1, \cL_2, \cF),
    \qquad
    \smax := \iD(\cL_2, \cL_1, \cF),
  \end{equation}
  where $\iD$ is the Duistermaat index (see
  Section~\ref{sec:index_formulas}). Then the bounds
  \begin{equation}
    \label{eq:spec_shift_bound}
    -\smin \leq \sigma(H_1, H_2; \lambda ) \leq \smax
  \end{equation}
  hold for all $\lambda \in \bbR$ below 
  $\spess(\minop)$.  Equivalently, each of the inequalities
  \begin{equation}
    \label{eq:mainWeylInterlacing}
    \lambda_{k-\smin}(H_1)
    \leq
    \lambda_k(H_2)
    \leq
    \lambda_{k+\smax}(H_1)
  \end{equation}
  holds for all $k$ such that the eigenvalues in question are below
  $\spess(\minop)$.
\end{theorem}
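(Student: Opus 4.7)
The plan is to derive the theorem by combining a Morse-type index formula for self-adjoint extensions with the cocycle identity for the Duistermaat index, following the Maslov-index strategy of \Cref{sec:proof1}.

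The central analytic object is the path of Cauchy data spaces
\begin{equation*}
  \mu(\lambda) := \tr\bigl(\ker(\minop^* - \lambda)\bigr) \subset \cK \oplus \cK,
\end{equation*}
defined for $\lambda \in \bbR$ below $\spess(\minop)$. Green's identity \eqref{eq:Greens_identity} forces $\mu(\lambda)$ to be Lagrangian, and \Cref{sec:CauchyDataSpace} supplies its smoothness in $\lambda$ without unique continuation hypotheses. The key spectral link is that $\lambda$ is an eigenvalue of the extension $H$ associated with the Lagrangian $\cL$ precisely when $\mu(\lambda) \cap \cL \neq \{0\}$, with multiplicity $\dim(\mu(\lambda) \cap \cL)$. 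Moreover, $\lambda \mapsto \mu(\lambda)$ is symplectically monotone (all intersections of $\mu$ with a fixed Lagrangian produce crossing forms of one sign), as one sees by computing the crossing form using Green's identity and the positivity of the inner product on $\cH$.

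First, I would establish a Morse-type identity expressing the eigenvalue count of each extension as a Duistermaat index,
\begin{equation*}
  N\bigl(H; (-\infty, \lambda_0]\bigr) = \iD\bigl(\cL, \mu(\lambda_0), \cF\bigr) + c(\cL, \cF),
\end{equation*}
valid for $\lambda_0 \in \bbR$ below $\spess(\minop)$, where $c(\cL, \cF)$ is a boundary normalisation depending only on the pair $(\cL, \cF)$. Two ingredients enter: (i) monotonicity of $\mu$, which converts the spectral count into a Maslov-index computation; and (ii) the limit $\lim_{\lambda \to -\infty} \mu(\lambda) = \cF$ in the Lagrangian Grassmannian, a manifestation of the variational description of $H_F$ as the maximal non-negative self-adjoint extension of $\minop$ (Kato's representation theorem applied to quadratic forms), which selects $\cF$ as the correct reference Lagrangian for converting the Maslov index into a Duistermaat triple index.

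Second, subtracting the two instances of the formula applied to $H_1$ and $H_2$ yields
\begin{equation*}
  \sigma(H_1, H_2; \lambda_0) = \iD\bigl(\cL_1, \mu(\lambda_0), \cF\bigr) - \iD\bigl(\cL_2, \mu(\lambda_0), \cF\bigr) + \bigl[c(\cL_1, \cF) - c(\cL_2, \cF)\bigr].
\end{equation*}
Applying the Kashiwara--H\"ormander cocycle identity
\begin{equation*}
  \iD(L_1, L_2, L_3) - \iD(L_1, L_2, L_4) + \iD(L_1, L_3, L_4) - \iD(L_2, L_3, L_4) = 0
\end{equation*}
with $(L_1, L_2, L_3, L_4) = (\cL_1, \cL_2, \mu(\lambda_0), \cF)$, the right-hand side rewrites as a signed combination of triples built from $\{\cL_1, \cL_2, \mu(\lambda_0), \cF\}$. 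Since every Duistermaat triple index is a non-negative integer, in the two-sided estimate the triples involving the auxiliary Lagrangian $\mu(\lambda_0)$ can be dropped when they appear with the favourable sign, leaving exactly $\iD(\cL_1, \cL_2, \cF) = \smin$ and $\iD(\cL_2, \cL_1, \cF) = \smax$, which gives \eqref{eq:spec_shift_bound}. The eigenvalue-interlacing form \eqref{eq:mainWeylInterlacing} follows from the uniform spectral shift bound by the standard comparison of ordered eigenvalue sequences.

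The principal obstacle will be the first step: rigorously proving the Morse-type formula, including the limit $\mu(\lambda) \to \cF$ as $\lambda \to -\infty$ in the finite-dimensional Grassmannian topology, pinning down the correct normalisation $c(\cL, \cF)$, and handling non-transversal crossings both in the interior and at the endpoint $\lambda_0$. A secondary analytic issue is the smoothness of $\mu$ without unique continuation hypotheses, which must be established separately in \Cref{sec:CauchyDataSpace}.
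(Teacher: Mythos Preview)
Your overall strategy---Maslov index of the Cauchy data path, limit $\mu(\lambda)\to\cF$ as $\lambda\to-\infty$, cocycle identity, then bound the residual Duistermaat index by $0$ and $n-\dim\cL_1\cap\cL_2$---is the paper's strategy.  However, your ``first step'' contains a genuine error that the paper takes care to avoid.

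You assert that $\lambda$ is an eigenvalue of $H_\cL$ with multiplicity $\dim\bigl(\mu(\lambda)\cap\cL\bigr)$.  This is false without the unique continuation property: \Cref{lem:intersection_dim} gives
\[
  \dim\ker(H_\cL-\lambda)=\dim\bigl(\mu(\lambda)\cap\cL\bigr)+\dim\ker(\minop-\lambda),
\]
and the second term can be positive (see \Cref{sec:noUCP} for an explicit example where every second eigenvalue is an inner solution invisible to $\mu$).  Consequently your individual Morse formula $N\bigl(H;(-\infty,\lambda_0]\bigr)=\iD\bigl(\cL,\mu(\lambda_0),\cF\bigr)+c(\cL,\cF)$ cannot hold with $c$ depending only on $(\cL,\cF)$: the inner-solution count varies with $\lambda_0$ and is not captured by the Duistermaat index.

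The paper's fix is to work from the outset with the \emph{difference} of counting functions (\Cref{lem:Maslov_counting}), since $\dim\ker(\minop-\lambda)$ is the same for every extension and cancels.  The Zhou--Wu--Zhu identity \eqref{eq:Hor_ZWZ0} then converts the difference of Maslov indices directly into $\iD\bigl(\cL_1,\cL_2,\mu(b)\bigr)-\iD\bigl(\cL_1,\cL_2,\mu(a)\bigr)$, without ever needing an absolute formula for either $N(H_j;\cdot)$.  Sending $a\to-\infty$ and using the one-sided continuity of $\iD$ (\Cref{prop:continuous}) gives \eqref{eq:shift_triple}, from which your second step---bounding the surviving $\iD\bigl(\cL_1,\cL_2,\mu(\lambda)\bigr)$ between $0$ and $n-\dim\cL_1\cap\cL_2$ and invoking \eqref{eq:iDswap1}---proceeds exactly as you describe.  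So the architecture of your second step is correct; only the foundation needs to be replaced by the difference formulation.
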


It follows from elementary properties of the Duistermaat index that
\begin{equation}
  \label{eq:rank_formula}
  \smin + \smax = n - \dim(\cL_1\cap\cL_2),
\end{equation}
where $n=\dim\cL_1 = \dim\cL_2 = \dim \cK$ is the defect number of the
operator $S$.  Relation~\eqref{eq:rank_formula} gives a shortcut for
computing one index in \eqref{eq:sminmax} from the other.  The fact
that (\ref{eq:rank_formula}) is the rank of the perturbation from
$H_1$ to $H_2$ (in the sense of (\ref{eq:rank_res_diff}) below)
suggests that the bounds in \eqref{eq:spec_shift_bound} are optimal.
\Cref{prop:sharp_bounds} puts this on a rigorous footing: for any
$\smin$ and $\smax$ there are extensions $H_1$ and $H_2$ for which
both estimates in \eqref{eq:spec_shift_bound} are sharp.

The intuition behind needing $\cF$ is as follows.  The (complex)
Lagrangian Grassmannian $\Lambda$ --- the set of all Lagrangian planes
in $\cK\oplus\cK$ --- is diffeomorphic to the unitary group $U(n)$, a
compact manifold without boundary.  The plane $\cF$ provides a point
of reference in $\Lagr$, which allows us to establish facts on the
ordering of the eigenvalues in $\R$, such as
\eqref{eq:mainWeylInterlacing}. Heuristically, $H_F$ is the extension
of $\minop$ with the largest number of ``eigenvalues at
$\pm\infty$''.\footnote{See, for instance, \cite[Thm.~1.4]{Men_ijm15},
  which says a Lagrangian plane $\cL$ intersects $\cF$ nontrivially if
  and only if every neighborhood of $\cL$ contains an $\cL'$ whose
  self-adjoint extension $H_{\cL'}$ has eigenvalues close to
  $-\infty$.}
    
One approach to Theorem~\ref{thm:main} proceeds via the Maslov index
of a special path of Lagrangian planes, the \emph{Cauchy data space},
defined here with the parameter $z\in\bbC$ by
\begin{equation}
  \label{eq:CDS_def}
  \cM(z) := \big\{(\Gamma_0 f, \Gamma_1f): f\in\ker(\minop^* - z) \big\}
  \ \subset\  \cK \oplus \cK.
\end{equation}
Relevant properties of $\cM(z)$ are given in 
\Cref{prop:cM_real_s}.
Note that $\cM(z)$ is
the graph of the
Dirichlet-to-Neumann map (Weyl--Titschmarsh function) $M(z)$ when the
latter is defined.

\begin{theorem}
  \label{thm:shift_Hormander}
  Under the assumptions of \Cref{thm:main}, one has
  \begin{equation}
    \label{eq:count_triple}
    N\big(H_1; (a,b] \big) 
    - N\big(H_2; (a,b] \big) 
    \ =\
    \iD\!\big(\cL_1, \cL_2, \cM(b)\big) -   \iD\!\big(\cL_1, \cL_2, \cM(a)\big)
  \end{equation}
  for any $[a,b] \subset \bbR \backslash \spess(\minop)$. In particular, 
  the spectral shift is given by
  \begin{equation}
    \label{eq:shift_triple}
    \sigma(H_1, H_2; \lambda) 
    \ =\
    \iD\!\big(\cL_1, \cL_2, \cM(\lambda)\big) - \iD(\cL_1, \cL_2, \cF)
  \end{equation}
  for any $\lambda \in \bbR$ below the essential
  spectrum.
\end{theorem}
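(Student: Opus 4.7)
The plan is to express both sides of \eqref{eq:count_triple} in terms of the Maslov index of the Cauchy data path $z \mapsto \cM(z)$ relative to the fixed Lagrangians $\cL_1$ and $\cL_2$, and then convert the resulting difference of Maslov indices into a difference of Duistermaat triple indices via a standard cocycle identity. Formula \eqref{eq:shift_triple} will then follow by sending $a \to -\infty$.

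First I would establish two basic facts about $\cM(z)$ for $z \in \R \setminus \spess(\minop)$. The \emph{spectral correspondence} $\dim\ker(H_j - \lambda) = \dim(\cL_j \cap \cM(\lambda))$ is immediate from the domain characterization \eqref{eq:domain_extension} together with the definition \eqref{eq:CDS_def}, so crossings of $\cM(\cdot)$ with $\cL_j$ parametrize the eigenvalues of $H_j$ with the correct multiplicity. \emph{Monotonicity} of the path follows from differentiating Green's identity \eqref{eq:Greens_identity}: for an analytic family $f(z) \in \ker(\minop^* - z)$, the crossing form evaluated on $u = \tr f$ is proportional to $\|f\|_\cH^2$, hence definite on $\cL_j \cap \cM(\lambda)$. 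Combining these with the standard crossing-form computation of the Maslov index, and choosing the endpoint convention so that the count matches the half-open interval $(a,b]$ (using monotonicity to handle degenerate endpoints), I obtain
\[
  N\!\big(H_j;(a,b]\big) \ =\ \Mas\!\big(\cM(\cdot),\, \cL_j;\, [a,b]\big),
  \qquad j = 1,2.
\]

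Next I would invoke the Hörmander-type cocycle identity provided by \Cref{sec:index_formulas}: for any continuous path of Lagrangians $\Lambda(\cdot)$ and fixed Lagrangians $\cL_1,\cL_2$,
\[
  \Mas(\Lambda(\cdot), \cL_2;[a,b]) - \Mas(\Lambda(\cdot), \cL_1;[a,b])
  \ =\ \iD(\cL_1,\cL_2,\Lambda(b)) - \iD(\cL_1,\cL_2,\Lambda(a)).
\]
This reflects the interpretation of $\iD(\cL_1,\cL_2,\cdot)$ as a primitive for the difference of Maslov forms. Applying it with $\Lambda = \cM$ and subtracting the $j=1$ instance of the previous display from the $j=2$ instance yields \eqref{eq:count_triple}.

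For \eqref{eq:shift_triple}, I would let $a \to -\infty$. Since $\lambda$ lies below $\spess(\minop)$, only finitely many eigenvalues of $H_j$ lie in $(-\infty,\lambda]$, so for $a$ sufficiently negative the left-hand side of \eqref{eq:count_triple} stabilizes to $\sigma(H_1,H_2;\lambda)$. On the right-hand side I would combine the convergence $\cM(a) \to \cF$ as $a \to -\infty$ --- a manifestation of the variational characterization of the Friedrichs extension --- with the one-sided continuity of the Duistermaat index developed in \Cref{sec:CauchyDataSpace} to conclude $\iD(\cL_1,\cL_2,\cM(a)) \to \iD(\cL_1,\cL_2,\cF)$. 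This last step is where I expect the main difficulty to lie: $\cM(a)$ and $\cF$ may fail to be transverse and $\iD$ is in general only semicontinuous, so the limit must be taken through the dedicated continuity theorem for $\iD$ rather than by a naive passage; a secondary subtlety is the coordination between the half-open interval $(a,b]$ in \eqref{eq:counting_def} and the endpoint convention for the Maslov index, which again relies on monotonicity to remain unambiguous.
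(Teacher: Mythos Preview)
Your overall strategy matches the paper's, but there is a genuine gap in the ``spectral correspondence'' step. The claimed identity $\dim\ker(H_j - \lambda) = \dim(\cL_j \cap \cM(\lambda))$ is \emph{false} in general: the trace map $\tr$ restricted to $\ker(H_j-\lambda)$ need not be injective, and its kernel is precisely $\ker(\minop-\lambda)$ (the ``inner solutions''). The correct relation, established in \Cref{lem:intersection_dim}, is
\[
  \dim\ker(H_j-\lambda) = \dim\big(\cL_j \cap \cM(\lambda)\big) + \dim\ker(\minop - \lambda).
\]
Consequently your individual counting formula $N(H_j;(a,b]) = \Mas(\cM(\cdot),\cL_j;[a,b])$ fails whenever $\minop$ has eigenvalues in $(a,b]$; the paper gives an explicit example in \Cref{sec:noUCP} where the Maslov index misses half of the eigenvalues of each extension.

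The repair is simple and is exactly what the paper does: work with the \emph{difference} of counting functions from the outset. The inner-solution term $\dim\ker(\minop-\lambda)$ is the same for $j=1$ and $j=2$, so it cancels in $N(H_1;(a,b]) - N(H_2;(a,b])$, and one obtains the difference-of-Maslov-indices formula of \Cref{lem:Maslov_counting} without any unique continuation hypothesis. From there your argument --- the H\"ormander/Zhou--Wu--Zhu cocycle identity \eqref{eq:Hor_ZWZ0}, then the limit $a\to-\infty$ using $\cM(a)\to\cF$ together with the one-sided continuity of $\iD$ in its third argument (this is \Cref{prop:continuous} in \Cref{sec:index_formulas}, not \Cref{sec:CauchyDataSpace}) --- coincides with the paper's.
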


We remark that it is common to express eigenvalue counting functions
as Maslov indices (see, for instance, the survey \cite{Beck} and
references therein) but only under the assumption of the unique
continuation property (UCP) for the symmetric operator $\minop$: the
mapping $f \mapsto (\Gamma_0 f, \Gamma_1 f)$ is injective on
$\ker(\minop^*-z)$ for all $z \in \C$.  Without the UCP, the Maslov index may
miss some eigenvalues, as demonstrated in \Cref{sec:noUCP}.  A novel
feature of \Cref{thm:shift_Hormander} is that the \emph{UCP is
  not necessary when evaluating the spectral shift}.

Another remarkable feature of \Cref{thm:shift_Hormander} is that using
a Maslov-type index (or a spectral flow, or the Krein shift function)
invariably involves integration or evaluation over a path (for
example, continuously tracking a branch of the logarithm).  In
contrast, equation~\eqref{eq:count_triple} involves only the data
collected at the endpoints of the interval!

An easy corollary of \Cref{thm:shift_Hormander} 
is the following elegant formula for the
Morse index of an extension $H_\cL$ of a non-negative symmetric
operator $S$:
\begin{equation}
  \label{eq:Morse_Duistermaat_preview}
  n_-(H_\cL) = \iD\!\big( \cM(0), \cL, \cF \big).  
\end{equation}
We refer the reader to \Cref{thm:BL_formula} for a precise
formulation and references to related results.

Another approach to \Cref{thm:main} is via the resolvents of $H_1$ and
$H_2$, which can be compared using the Krein resolvent formula.
Throughout, we use the notation $\cN_z:=\ker(\minop^*-z)$, 
and denote the number of zero, positive and negative 
eigenvalues of a self-adjoint operator (whenever these quantities are finite) by 
$n_\bullet(\cdot)$ with $\bullet \in \{0, +, -\}$; cf. 
\eqref{eq:n_plus_minus_def}.

\begin{theorem}
  \label{thm:resolvent_diff}
  Assume the setting of Theorem \ref{thm:main}. For
  $\lambda\in \rho(H_1)\cap\rho(H_2) \cap \bbR$ we define the operator
  \begin{equation}
    \label{eq:resolvent_diff}
    D(\lambda) := (H_1-\lambda)^{-1} - (H_2-\lambda)^{-1}.
  \end{equation}
  Then one has
  \begin{align} 
    \begin{split}
    \label{eq:index_K-int}
    &n_-\big(D(\lambda)\big) = \iD\!\big(\cL_1, \cL_2, \cM(\lambda)\big),\\
    &n_+\big(D(\lambda)\big) = \iD\!\big(\cL_2, \cL_1, \cM(\lambda)\big),
    \end{split}\\
	&n_0\big(D(\lambda)|_{\cN_{\lambda}} \big)=\dim \cL_1\cap \cL_2.  \label{eq:0index_K-int}
\end{align}
In addition,  $D(\lambda)$ has constant rank given by
\begin{equation}
  \label{eq:rank_res_diff}
  \rank\big(D(\lambda)\big) = n-\dim \cL_1\cap \cL_2,
\end{equation} 
while the functions $\lambda\mapsto n_{\pm}(D(\lambda))$ are locally constant
on $\rho(H_1)\cap\rho(H_2) \cap \R$.
\end{theorem}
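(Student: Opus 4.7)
The plan is to use the abstract Green's identity to rewrite the Hermitian form $\langle D(\lambda)\cdot,\cdot\rangle_\cH$ as a symplectic pairing of boundary data, thereby reducing the inertia of the resolvent difference to the canonical form whose inertia defines the Duistermaat index.

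First I would establish that $D(\lambda)$ has finite rank with image in $\cN_\lambda$. For $u \in \cH$, setting $f_j := (H_j - \lambda)^{-1} u \in \dom(H_j)$, one has $(\minop^* - \lambda)(f_1 - f_2) = 0$, so $\phi := D(\lambda) u \in \cN_\lambda$. Since $D(\lambda)$ is self-adjoint, $\cN_\lambda^\perp \subset \ker D(\lambda)$ as well, so all spectral information of $D(\lambda)$ is encoded in the restriction $D(\lambda)|_{\cN_\lambda}$, and $\rank D(\lambda) \leq \dim \cN_\lambda = n$ on the connected component of $\R \setminus \spess(\minop)$ containing $\lambda$.

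Next, I would set $g_j := (H_j - \lambda)^{-1} v$ and apply Green's identity \eqref{eq:Greens_identity}. Using $\tr f_j, \tr g_j \in \cL_j$ and the Lagrangian identity $\omega(\tr f_j, \tr g_j) = 0$, a short calculation gives
\begin{equation*}
\langle D(\lambda) u, v\rangle_\cH = \omega(\tr f_1, \tr g_2) = -\omega(\tr f_2, \tr g_1).
\end{equation*}
Since $\tr \phi = \tr f_1 - \tr f_2 \in (\cL_1 + \cL_2) \cap \cM(\lambda)$, this induces a Hermitian form
\begin{equation*}
Q(\xi, \eta) := \omega(a_1, b_2), \qquad \xi = a_1 - a_2, \quad \eta = b_1 - b_2, \quad a_j, b_j \in \cL_j,
\end{equation*}
on $(\cL_1 + \cL_2) \cap \cM(\lambda)$. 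Well-definedness modulo the $\cL_1 \cap \cL_2$-ambiguity in each splitting, Hermitian symmetry, and the fact that the two expressions in the preceding display coincide all reduce to the Lagrangian identity $\omega(\cM(\lambda), \cM(\lambda)) = 0$. By construction $Q$ is (up to sign conventions) the canonical form whose inertia indices define the Duistermaat index, giving \eqref{eq:index_K-int}---with $\cL_1 \leftrightarrow \cL_2$ accounting for $n_+$.

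The rank formula \eqref{eq:rank_res_diff} then follows from the dimension count $\dim((\cL_1 + \cL_2) \cap \cM(\lambda)) = n - \dim(\cL_1 \cap \cL_2)$ (a consequence of the transversality $\cM(\lambda) \cap \cL_j = 0$ for $\lambda \in \rho(H_j)$) combined with non-degeneracy of the reduced Duistermaat form, and \eqref{eq:0index_K-int} is immediate from $\dim \cN_\lambda - \rank D(\lambda)$. On $\rho(H_1) \cap \rho(H_2) \cap \R$ the family $D(\lambda)$ is analytic with locally constant rank, so its signature $n_\pm(D(\lambda))$ is locally constant. The main obstacle I foresee is the symplectic bookkeeping in the second step: verifying well-definedness of $Q$ modulo $\cL_1 \cap \cL_2$, confirming its non-degeneracy on the quotient, and matching sign conventions with the Duistermaat index as defined in Section \ref{sec:index_formulas}. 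A useful feature, which sidesteps a subtlety in the Maslov-index route behind \Cref{thm:shift_Hormander}, is that $Q$ lives intrinsically on the Cauchy data space $\cM(\lambda)$, so no unique continuation hypothesis enters the identification of the inertia of $D(\lambda)$.
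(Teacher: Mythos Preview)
Your approach is correct and genuinely different from the paper's. The paper first proves an abstract symplectic result (\Cref{prop:diff_of_Krein_terms}) computing $n_-(\Delta)$ for $\Delta=(\cL_1-\cL_3)^{-1}-(\cL_2-\cL_3)^{-1}$, then invokes the Krein--Naimark formula $D(\lambda)=\gamma(\lambda)\Delta\gamma(\lambda)^*$ with $\cL_3=\cM(\lambda)$, which requires excluding the eigenvalues of the reference extension $H_0$; it also uses a separate resolvent-difference identity from \cite{LatSuk_prep20} for the nullity. You instead go directly from $D(\lambda)$ to boundary data via Green's identity, bypassing the $\gamma$-field and $H_0$ entirely and working at every $\lambda\in\rho(H_1)\cap\rho(H_2)\cap\R$ at once. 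The paper's route cleanly separates the symplectic linear algebra from the operator theory; yours fuses them and is shorter.

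Two points of care in your bookkeeping step. First, your form $Q$ lives on $(\cL_1+\cL_2)\cap\cM(\lambda)$, whereas the paper's form $Q(\cL_1,\cM(\lambda);\cL_2)$ from \Cref{cor:n_minus_Q} lives on $\cL_1$; they are identified by the surjection $\cL_1\ni a_1\mapsto -La_1\in(\cL_1+\cL_2)\cap\cM(\lambda)$ (projection along $\cL_2$), with kernel $\cL_1\cap\cL_2$, and one checks $Q(-La_1,-Lb_1)=\omega(a_1,b_1+Lb_1)=\omega(a_1,Lb_1)$. Second, to transfer inertia from $D(\lambda)|_{\cN_\lambda}$ to $Q$ you need the map $\cN_\lambda\ni u\mapsto\tr f_1\in\cL_1$ to be a bijection; this is exactly \Cref{propositionc.16} in the paper, and it also gives $n_0(D(\lambda)|_{\cN_\lambda})=\dim\{u:\tr f_1\in\cL_1\cap\cL_2\}=\dim\cL_1\cap\cL_2$ directly. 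Your observation that UCP is irrelevant is right for the simple reason that $\ker(S-\lambda)\subset\ker(H_1-\lambda)=0$ whenever $\lambda\in\rho(H_1)$, so $\dim\cN_\lambda=n$ throughout.
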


In \Cref{sec:proof_krein_resolvent} we will explain how \Cref{thm:main} can be obtained 
from \Cref{thm:resolvent_diff}. In particular, we will use the index formulas in \eqref{eq:index_K-int} 
to derive the interlacing inequality \eqref{eq:mainWeylInterlacing}.

\begin{remark}
  Assume the setting of Theorem \ref{thm:resolvent_diff}.
  \begin{enumerate}
  \item Since $H_1$ and $H_2$ are both extensions of $\minop$, we have $D(\lambda)f=0$ provided 
$f\in\cN_{\lambda}^{\perp}=\overline{\ran(\minop-\lambda)}$, thus $\cN_{\lambda}^{\perp} \subset \ker D(\lambda)$. In this context, the identity in \eqref{eq:0index_K-int} provides new information about the part of $\ker D(\lambda)$ contained in $\cN_\lambda$. 

\item The fact that the rank of $D(\lambda)$ is constant on $\rho(H_1)\cap\rho(H_2) \cap \bbR$ was shown in \cite[Thm.~2.8.1]{BehHasDeS_boundarytriples}; however, the explicit value $n-\dim \cL_1\cap\cL_2$ appears to be new.

  \item The indices $n_-\big(D(\lambda)\big)$ and
    $n_+\big(D(\lambda)\big)$ may change 
    when $\lambda$ passes through $\spec(H_1)\cup \spec(H_2)$,
    but their sum remains constant, as seen from
    \eqref{eq:rank_res_diff}, cf.\ \eqref{eq:rank_formula}.
    \hfill$\Diamond$
  

  \end{enumerate}
\end{remark}

\subsection{Related results and possible extensions}
\label{sec:literature_and_conjectures}

The most general prior result on eigenvalue interlacing known to us in
the context of self-adjoint extension is Theorem 10.2.5 of
\cite{BirmanSolomjak_book}.  This theorem only applies when the
operator $H_2$ is obtained from $H_1$ by the restriction of its form
domain.  For instance, no pair of extensions from
Example~\ref{sec:examples_n2} satisfies this condition.  In contrast,
our \Cref{thm:main} allows one to compare \emph{any} pair of
self-adjoint extensions of $S$.

Relations between the Morse index of self-adjoint extensions of $S$
and the boundary operators in $\cK\oplus\cK$ constructed by means of
the Weyl function (as in \Cref{thm:BL_formula}, for example) are
classical, and known at least since M.  Krein \cite{MR0024574}, Birman
\cite{MR0080271}, and Derkach--Malamud \cite{DerMal_jfa91}; see also
the recent treatise \cite{BehHasDeS_boundarytriples} and the
literature cited therein. However, the geometric approach via the
Duistermaat index offered in the current paper allow us, on the one
hand, to drop inessential restrictions (such as the
form domain inclusion condition in \cite[Thms.~5 and 6]{DerMal_jfa91})
and, on the other hand, to compute the spectral shift using the
Duistermaat index calculus that we review and extend in
Section~\ref{sec:index_formulas}.

The inequalities~\eqref{eq:spec_shift_bound}
and~\eqref{eq:mainWeylInterlacing} give different but
equivalent points of view on the same result; we include both of them
for completeness and ease of use in applications.  However, the
discrete spectral shift we consider is extended by a more flexible
concept of \emph{Krein spectral shift} \cite{MR1607900, MR0049490,
  MR1202723, MR0060742, MR0139006, Schmudgen_unboundedSAO}, valid also
on the continuous spectrum and in the gaps.  With this extension, we
conjecture that \eqref{eq:spec_shift_bound} holds for all real
$\lambda$. The case of a rank-one perturbation has been
thoroughly investigated in
\cite{BehLebMarMowTru_jmaa16,BehMowTru_om16}, in the more general
setting of self-adjoint operators on a Krein space.  The latter
results are formulated in the gaps of the essential spectrum (see also
\cite{BehMowTru_caot14}), which gives further support to our
conjecture about the universal validity of
\eqref{eq:spec_shift_bound}.  Note, additionally, that (most of) the conclusions
in \Cref{thm:shift_Hormander,thm:resolvent_diff} are already valid in the gaps.

\subsection*{Outline of paper}
In \Cref{sec:monotonicity} we review the crossing form and the computation of the Maslov index 
for monotone paths. In \Cref{sec:index_formulas} we recall the definition 
of the Duistermaat index, in addition to obtaining new results on its one-sided limits and explicit formulas 
for its calculation via Lagrangian frames. \Cref{sec:CauchyDataSpace} derives fundamental 
properties of the Cauchy data space, which we use in \Cref{sec:proof1}  to calculate its 
Maslov index and hence prove our main theorems. In \Cref{sec:proof_krein_resolvent} we 
give a second proof using the Krein resolvent formula, and in \Cref{sec:examples} 
we present some applications of our results. \Cref{app:Lagrangian} summarizes basic 
properties of the Lagrangian Grassmannian that are used throughout.

\subsection*{Notation and conventions}
We use $\langle \cdot,\,\cdot\rangle_{\cH}$ and
$\langle \cdot,\,\cdot\rangle_{\cK}$ to denote the scalar products on
$\cH$ on $\cK$.  
These are taken to be linear in the \emph{second}
argument, as is the symplectic form $\omega$ in
\eqref{eq:sympl_form}. We use $\oplus$ to denote the direct (not necessarily
orthogonal) sum, and the set of all Lagrangian planes in $(\cK \oplus \cK, \omega)$ is
denoted $\Lagr$. The zero subspace will be denoted by $0$ 
when the ambient space is clear from the context.
 The space of bounded linear operators between Hilbert spaces $\cH_1, \cH_2$ is denoted by $\cB(\cH_1, \cH_2)$. We denote by
$\spec(\cdot)$ and $\rho(\cdot)$ the spectrum and the resolvent set.


\subsection*{Acknowledgements}
The authors are grateful to Ram Band, Dean Baskin, Maurice de Gosson, Peter
Howard, Alexander V. Kiselev, Peter Kuchment, Marina Prokhorova, Gilad
Sofer and Igor Zelenko for their help, encouragement, and suggestions
of references.

G.B. was partially supported by NSF grant DMS-2247473.
G.C. acknowledges the support of NSERC Discovery Grant 2017-04259.
Y.L. was supported by the NSF grant DMS-2106157, and would like to
thank the Courant Institute of Mathematical Sciences at NYU and
especially Prof.\ Lai-Sang Young for their hospitality.  S.S. was
supported in part by NSF grants DMS--2510344, DMS--2418900, Simons Foundation grant
MP--TSM--00002897,grant no. $2024154$ from the U.S.-Israel Binational Science Foundation Jerusalem, Israel, and by the Office of the Vice President for
Research \& Economic Development (OVPRED) at Auburn University through
the Research Support Program grant.

The authors thank the anonymous referees for suggesting several
important corrections to the manuscript.

\section{Crossing forms and monotonicity}
\label{sec:monotonicity}

We first review crossing forms and the computation of the Maslov index for monotone paths. This  suffices for the purposes of this paper; see \Cref{app:Lagrangian} for a general discussion of the Maslov index and different parameterizations of Lagrangian subspaces. We work in a finite-dimensional complex symplectic space, $\cK \oplus \cK$, with Lagrangian Grassmannian $\Lambda$.

Let $\cL(\cdot)\colon [0,1]\to\Lagr$ be a differentiable path of Lagrangian planes. 
For $t_0 \in [0,1]$ and $u, v \in \cL(t_0)$  we define the crossing form $\form$ by
\begin{equation}
\label{Q:def}
	\form(u, v) := \frac{d}{dt} \omega\big(u, \tilde v(t) \big) \bigg|_{t=t_0},
\end{equation}
where $\tilde v(t)$ is any differentiable path in $\cK \oplus
\cK$ such that $\tilde v(t_0) = v$ and $\tilde v(t) \in \cL(t)$ for
all $t$ near $t_0$. For this to be a valid definition, we must show
that it does not depend on the choice of the path $\tilde v$.

Suppose $\tilde u(t)$ is a differentiable path in $\cK \oplus \cK$ with $\tilde
u(t_0) = u$ and $\tilde u(t) \in \cL(t)$. The fact that $\tilde u(t)$
and $\tilde v(t)$ are both in $\cL(t)$ implies $\omega\big(\tilde
u(t), \tilde v(t) \big) = 0$. Differentiating at $t_0$, we get
\begin{equation}
  \label{eq:sesqui_well_defined}
  \omega\big(u, \tilde v'(t_0)\big)
  = -\omega\big(\tilde u'(t_0), v\big)
  = \overline{ \omega\big(v, \tilde u'(t_0)\big) }.
\end{equation}
Since the left-hand side does not depend on $\tilde u(t)$ and
the right-hand side does not depend on $\tilde v(t)$, both sides are path independent. 
This proves that $\form(u,v)$ is well defined, and is equal to $\overline{\form(v,u)}$.

We now give some equivalent expressions for the quadratic form $\form[v] := \form(v,v)$. The analogous expressions for the sesquilinear form $\form(u,v)$ can be obtained by polarization.

\begin{theorem}\label{THMform}
  Let $\cL(\cdot)\colon [0,1]\to\Lagr$ be a differentiable path of Lagrangian planes.
  Fix $v \in \cL(t_0)$ and let $\hat\cL$ be a Lagrangian subspace transversal to $\cL(t_0)$.
  
  \begin{enumerate}
  \item If $w(t)$ is the unique path in $\hat\cL$ for which $v + w(t) \in \cL(t)$, then
    \begin{equation}
      \label{Q-RS}
      \form[v] = \frac{d}{dt} \omega\big(v, w(t) \big) \bigg|_{t=t_0}.
    \end{equation}
    
  \item If $\cL^\sharp$ is transversal to $\hat\cL$, $L_t \colon \cL^\sharp \to \hat\cL$ is a
    differentiable family of operators such that
    $\cL(t) = \{u+L_tu \colon u \in \cL^\sharp \}$ (i.e.,
    $\cL(t)$ is the graph of $L_t$) and $u \in \cL^\sharp$ is such that $v = u + L_{t_0} u$, then
    \begin{equation}
      \label{Q-St}
      \form[v] = \frac{d}{dt} \omega\big(u, L_t u \big) \bigg|_{t=t_0}.
    \end{equation}
    
  \item If
    $Z(t) = \Big(\! \begin{smallmatrix} X(t) \\ Y(t) \end{smallmatrix}
    \!\Big)$ is a differentiable frame for $\cL(t)$ and $\kappa \in \cK$
    is such that $v = Z(t_0) \kappa$, then
    \begin{equation}
      \label{Q;frame}
      \form[v] = \left<\kappa, \big(X^*(t_0) Y'(t_0) - Y^*(t_0) X'(t_0)\big) \kappa \right>_{\bbC^n}.
    \end{equation}
    
  \item If $G_t$ is a differentiable family of operators on
    $\cK \oplus \cK$ such that $G_t \cL(t_0) = \cL(t)$ and
    $G_{t_0} = I_{\cK \oplus \cK}$, then
    \begin{equation}
      \label{Q;Gt}
      \form[v] = \frac{d}{dt} \omega\big(v, G_t v \big) \bigg|_{t=t_0}.
    \end{equation}
  \end{enumerate}
\end{theorem}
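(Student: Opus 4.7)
The plan is to exploit the already-established path-independence of the definition \eqref{Q:def}: for each of the four claims it suffices to exhibit a specific differentiable path $\tilde v(t)\in\cL(t)$ with $\tilde v(t_0)=v$, compute $\omega(v,\tilde v(t))$ using Lagrangian and symplectic identities, and differentiate at $t_0$. Since every nonzero term in what remains after differentiation will automatically coincide with the expression on the right-hand side of the relevant formula, no monodromy or normalization issues appear. The only real technical points concern the existence and differentiability of the candidate paths.

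For part (1), I would first justify that $w(t)$ exists, is unique, and is differentiable near $t_0$. Transversality of $\hat\cL$ to $\cL(t_0)$ is an open condition (cf.\ the material on the Lagrangian Grassmannian in \Cref{app:Lagrangian}), so for $t$ close to $t_0$ the plane $\cL(t)$ is transversal to $\hat\cL$ and can be written as the graph of a smoothly varying operator from $\cL(t_0)$ into $\hat\cL$. Applying this graph description to $v\in\cL(t_0)$ gives a unique differentiable $w(t)\in\hat\cL$ with $v+w(t)\in\cL(t)$. Setting $\tilde v(t)=v+w(t)$ and using $\omega(v,v)=0$ yields \eqref{Q-RS} immediately from \eqref{Q:def}.

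For part (2), I would take $\tilde v(t)=u+L_t u$, which lies in $\cL(t)$ by hypothesis and is differentiable by assumption on $L_t$. Expanding
\[
\omega(v,\tilde v(t))=\omega(u,u)+\omega(u,L_t u)+\omega(L_{t_0}u,u)+\omega(L_{t_0}u,L_t u),
\]
the first term vanishes since $u\in\cL^\sharp$ which is Lagrangian, the third is constant in $t$, and the fourth vanishes because $L_{t_0}u,L_t u\in\hat\cL$ and $\hat\cL$ is Lagrangian. Differentiating yields \eqref{Q-St}. For part (3), I would simply take $\tilde v(t)=Z(t)\kappa$, which is a differentiable path in $\cL(t)$ with the correct value at $t_0$. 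Writing out $\omega(Z(t_0)\kappa,Z'(t_0)\kappa)$ according to the block decomposition $Z=\binom{X}{Y}$ and the definition \eqref{eq:sympl_form} gives \eqref{Q;frame} after moving $X(t_0)^*$ and $Y(t_0)^*$ to the left in the inner products. Part (4) is the most direct: set $\tilde v(t)=G_tv$, which lies in $\cL(t)$ because $G_t\cL(t_0)=\cL(t)$ and has $\tilde v(t_0)=v$ because $G_{t_0}=I$; then \eqref{Q;Gt} is just \eqref{Q:def} applied to this path.

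There is no single hard step; the only subtle point is the justification in (1) that $w(t)$ is a well-defined, differentiable path near $t_0$, which I would handle by a one-sentence appeal to the openness of transversality and the graph parameterization of the Lagrangian Grassmannian. Everything else is a one-line cancellation argument using that $\cL(t_0)$, $\cL^\sharp$, $\hat\cL$ are Lagrangian, that $\omega$ is constant-valued when both arguments stay in the same Lagrangian plane, and that constants drop out under differentiation.
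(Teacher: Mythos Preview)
Your proposal is correct and follows exactly the paper's approach: choose the specific paths $\tilde v(t)=v+w(t)$, $u+L_tu$, $Z(t)\kappa$, and $G_tv$, then invoke the path-independence of \eqref{Q:def}. The paper's proof is in fact just a one-line pointer to these four paths, so your version is a more detailed write-up of the same argument.
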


Note that \eqref{Q-RS} is the definition of $\form$ given by Robbin and Salamon in \cite[Thm.~1.1]{RobSal_t93}, so this theorem shows that our definition in \eqref{Q:def} is equivalent to theirs. Similarly, choosing $\cL^\sharp = \cL(t_0)$ in \eqref{Q-St} recovers the definition in \cite[Eq.~(2.1)]{BooFur_tjm98}.

\begin{proof}
The given expressions for $\form[v]$ follow from \eqref{Q:def} with appropriate choices of the path $\tilde v(t)$, namely $v + w(t)$, $u + L_t u$, $Z(t)\kappa$ and $G_t v$. 
\end{proof}

The crossing form allows us to define a notion of monotonicity for differentiable paths.

\begin{definition}
  \label{def:increasing_path}
  A differentiable path $\cL(\cdot) \colon (0,1)\to\Lagr$ is
  \term{non-decreasing} (corresp.\ \term{increasing}) if the crossing form $\form = \form_{t_0}$ on $\cL(t_0)$ is
  non-negative (corresp.\ positive) at every $t_0$.
\end{definition}

The Maslov index of an increasing $C^1$ path 
$\cM(\cdot)$, with reference plane $\cL$, is given by
  \begin{equation}
    \label{eq:Maslov_intersections}
    \Mas_{[a,b]}\!\big(\cM(\cdot), \cL\big)
    = \sum_{t \in [a,b)} \dim \!\big(\cM(t)\cap\cL \big).
  \end{equation}
	 The fact that $\form$ is positive (and hence non-degenerate) guarantees that the crossings are isolated, 
  therefore the above sum is finite. This is a special case of  \cite[Prop.~3.27]{BooZhu_memAMS}; see 
  \eqref{MIforms} for the general formula. In practice we will use the equivalent formula
  \begin{equation}
    \label{eq:Maslov_intersections2}
    \Mas_{[a,b]}\!\big(\cL, \cM(\cdot)\big)
    = - \sum_{t \in (a,b]} \dim \!\big(\cM(t)\cap\cL \big).
  \end{equation}
  which is obtained from \eqref{eq:Maslov_intersections} using the identity \eqref{eq:CLM_antisymmetric}. Note that 
  the sum in \eqref{eq:Maslov_intersections} is over $t \in [a,b)$, whereas the sum in \eqref{eq:Maslov_intersections2} is over $t \in (a,b]$.

\begin{remark}
  \label{rem:montonicityBGB}
  Our notion of path
  monotonicity is the standard one for the Lagrangian Grassmannian
  \cite{Arn_faa67} and is local in nature.  If $\cL(t)$ never
  intersects the vertical subspace $\cV = 0 \oplus \cK$, then this
  coincides with the partial ordering defined in
  \cite[Sec.~5.2]{BehHasDeS_boundarytriples} for families of
  self-adjoint linear relations.
  Namely, $\cL(\cdot)$ is non-decreasing in the sense of
  \Cref{def:increasing_path} if and only if $\cL(t_1) \leq \cL(t_2)$
  for all $t_1 \leq t_2$ in the sense of
  \cite[Def.~5.2.3]{BehHasDeS_boundarytriples}.
  
  This is no longer the case if $\cL(t)$ intersects $\cV$ at some
  time. A simple example is
  $\cL(t) = \{ (z \cos t, z \sin t) : z \in \bbC\}$, which is
  Lagrangian in $\bbC^2$ for real values of $t$. At an arbitrary point
  $v = (z_0 \cos t_0, z_0 \sin t_0) \in \cL(t_0)$ we compute
  $\form[v] = |z_0|^2$, and conclude that $\cL(\cdot)$ is increasing
  in the sense of \Cref{def:increasing_path}. On the other hand,
  for small positive $\epsilon$, 
  $\cL(\pi/2+\epsilon) \leq \cL(\pi/2-\epsilon)$ 
  in the sense of \cite[Def.~5.2.3]{BehHasDeS_boundarytriples}.
  \hfill$\Diamond$
\end{remark}


\section{The Duistermaat triple index}
\label{sec:index_formulas}

Again assuming that $\cK\oplus\cK$ is a finite-dimensional complex
symplectic space, we now recall the definition of the
\emph{Duistermaat index} $\iD$, which first appeared in
\cite[Eq.~(2.16)]{Dui_am76}.  It is closely related to other
symplectically invariant triple indices, such as Kashiwara--Wall index
\cite{CapLeeMil_cpam94,LionVergne_WeilMaslov,Wal_im69} --- also known
as the ``triple signature''
\cite[App.~6.2]{LibermannMarle_symplectic_geometry} --- and
Leray--de~Gosson index of inertia defined in
\cite[Def.~148]{deGosson_PrinciplesNewtonianQuantum} (generalizing
\cite[I.2.4]{Leray_LagrangianAnalysis} to remove the assumption of
transversality).  In fact there is only one non-trivial symplectic
invariant of a triple of Lagrangian planes
\cite[Prop.~4.4]{AgrGam_SymplMeth}, and the above indices are just
different incarnations of it.

When describing the Duistermaat index, we follow the original
definition \cite[Eq.~(2.16)]{Dui_am76}, but use the notational
conventions of \cite{ZhoWuZhu_fmc18}, whose results we will use
below.  An alternative axiomatic approach to the Duistermaat
index 
is presented in our follow-up work
\cite{BerCoxLatSuk_prep24}.

\subsection{Definition and basic properties}
For Lagrangian planes $\alpha$, $\beta$ and $\gamma$ such
that $\alpha \cap \beta = 0 = \beta \cap \gamma$, we can view $\gamma$ as
the graph of a linear mapping $L \colon \alpha \to \beta$, i.e., $\gamma=\{u+Lu: u\in\alpha\}$.  This
gives rise to a bilinear form $Q(\alpha, \beta; \gamma)\colon
\alpha \times \alpha \to \C$
acting by
\begin{equation}
  \label{eq:Qdef}
  Q(\alpha, \beta; \gamma)\colon
  (u_1, u_2) \mapsto \omega(u_1, L u_2).
\end{equation}
It is easily shown that
\begin{equation}
\label{eq:Qkernel}
	\ker Q(\alpha, \beta; \gamma) = \alpha \cap \gamma.
\end{equation}
For arbitrary $\cL_1, \cL_2, \cL_3 \in \Lagr$ we choose
an $\hat\cL$ that is transversal to all three and define 
the \emph{Duistermaat index} of $(\cL_1,\cL_2,\cL_3)$ to
be\footnote{The negative index of a bilinear form is defined as the
  maximal dimension of a subspace on which the form is negative
  definite.  Alternatively, one may count the number of negative
  eigenvalues of the corresponding Hermitian matrix, as in
  \eqref{eq:n_plus_minus_def}.} 
\begin{equation}
  \label{eq:Dui_triple_def}
  \iD(\cL_1,\cL_2,\cL_3)
  := n_-\big(Q(\cL_2, \hat\cL; \cL_3)\big)
    - n_-\big(Q(\cL_1, \hat\cL;  \cL_3)\big)
    + n_-\big(Q(\cL_1, \hat\cL; \cL_2)\big).
\end{equation}

One can geometrically describe the integer $\iD(\cL_1,\cL_2,\cL_3)$ as
the maximal dimension of a subspace $\widehat{\cL}_3 \subset \cL_3$
which lies between $\cL_1$ and $\cL_2$, where ``between'' is defined
in terms of the positive direction of rotation in the Lagrangian
Grassmannian,
as introduced in \Cref{sec:monotonicity}.
This geometric interpretation, which is not immediately obvious from
the definition, is illustrated by the following example (see also
\Cref{cor:DNmap}).

\begin{figure}
  \centering
  \includegraphics[scale=0.9]{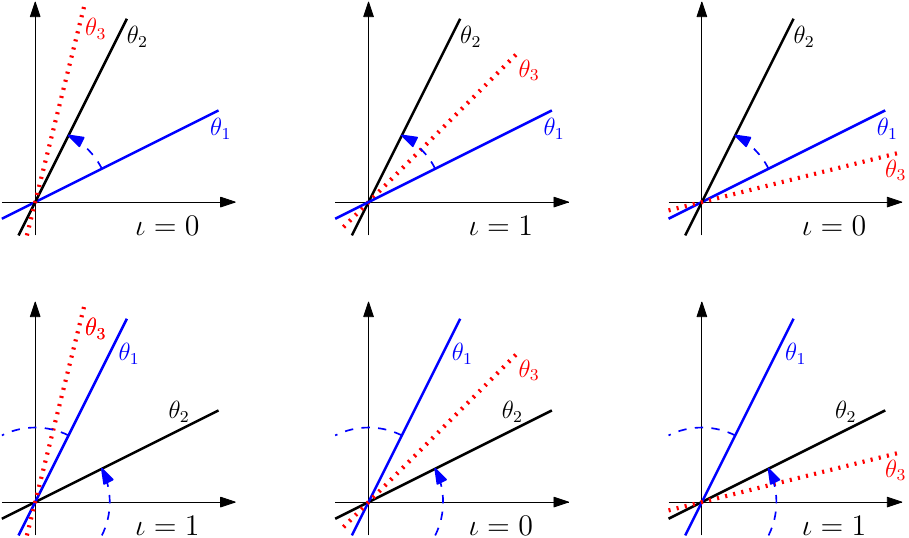}
  \caption{Possible transversal configurations of three Lagrangian
    planes (illustrated in $\R^2$) 
    and the corresponding values of the
    Duistermaat index. }
  \label{fig:iD_examples}
\end{figure}

\begin{example}
  \label{ex:basic_iD}
  In the symplectic space $\C^2$ consider the Lagrangian planes
  $\cL_j = \{ (z, \theta_j z) \colon z\in\C\}$ with $\theta_j \in \R$
  and $j=1,2,3$.  Simple calculations (for example, using
  \Cref{prop:Dui_index} where we can take $\epsilon=0$)
  show that
  \begin{equation}
    \label{eq:basic_iD}
    \iD(\cL_1,\cL_2,\cL_3)
    =
    \begin{cases}
      0,\quad & \theta_1 \leq \theta_2 \leq \theta_3
      \ \text{ or }\ 
      \theta_3 < \theta_1 \leq \theta_2
      \ \text{ or }\ 
      \theta_2 \leq \theta_3 < \theta_1,\\
      1,\quad & \theta_1 \leq \theta_3 < \theta_2
      \ \text{ or }\
      \theta_2 < \theta_1 \leq \theta_3
      \ \text{ or }\
      \theta_3 < \theta_2 < \theta_1.
    \end{cases}
  \end{equation}
  These results are illustrated in Figure~\ref{fig:iD_examples}.
\end{example}

We next recall some basic properties of $\iD$ that will be useful
later.  The Duistermaat index $\iD$ is a symplectic invariant: for any
symplectic automorphism $g$ of $(\cK\oplus\cK, \omega)$, we have
\begin{equation}
  \label{eq:symplectic_invariant}
  \iD(\cL_1,\cL_2,\cL_3) = \iD\!\big(g(\cL_1),g(\cL_2),g(\cL_3)\big).
\end{equation}
For any Lagrangian $\cL_1$, $\cL_2$, $\cL_3$ and $\cL_4$, $\iD$
satisfies the \emph{cocycle property}:
\begin{equation}
  \label{eq:cocyle_identity}
  \iD(\cL_1,\cL_2,\cL_3) - \iD(\cL_1,\cL_2,\cL_4)
  + \iD(\cL_1,\cL_3,\cL_4) - \iD(\cL_2,\cL_3,\cL_4) = 0.
\end{equation}
This follows from \cite[Thm~1.1]{ZhoWuZhu_fmc18}, cf.\
\cite[Prop.~1.5.8]{LionVergne_WeilMaslov},
\cite[Eq.~(I.2.13)]{deG_jmpa92}, \cite[Sec.~8,
Prop.~VI]{CapLeeMil_cpam94}.  In fact, the definition in
\eqref{eq:Dui_triple_def} can be interpreted as letting
\begin{equation}
  \label{eq:n_minus_Q}
  \iD(\cL_1,\cL_2,\cL_3) = n_-\big(Q(\cL_1,\cL_3;\cL_2)\big),
  \qquad \text{assuming }\cL_1 \cap \cL_3 = 0 = \cL_2\cap\cL_3,
\end{equation}
then using the cocycle property to extend to Lagrangian planes
with no transverality assumptions.  Equation~\eqref{eq:n_minus_Q} 
will be derived from the definition
\eqref{eq:Dui_triple_def} of $\iD$ in Corollary~\ref{cor:n_minus_Q}
below (see also \cite[Lem.~2.4]{Dui_am76} and
\cite[Lem.~3.13]{ZhoWuZhu_fmc18}).

From \cite[Lem.~2.4]{Dui_am76} we immediately get an estimate
\begin{equation}
  \label{eq:Dui_estimate}
  0 \leq \iD(\cL_1,\cL_2,\cL_3)
  \leq n - \dim\big((\cL_1\cap\cL_2)+(\cL_2\cap\cL_3)\big)
  \leq n - \dim \cL_1 \cap \cL_2.
\end{equation}
Under permutation of the first two arguments we have
\begin{equation}
  \label{eq:iDswap1}
  \iD(\cL_1,\cL_2,\cL_3) + \iD(\cL_2,\cL_1,\cL_3)
    = n - \dim \cL_1 \cap \cL_2.
\end{equation}
This follows, for instance, from
\Cref{prop:Dui_index} below. We also have the cyclic identity
\begin{align}
  \label{eq:iDshift1}
  \iD(\cL_1,\cL_2,\cL_3) - \dim\cL_1\cap\cL_3
    = \iD(\cL_3,\cL_1,\cL_2) - \dim\cL_2\cap\cL_3,
\end{align}
which follows from \cite[Lem.~3.2 and Lem.~3.13]{ZhoWuZhu_fmc18}.
Combining \eqref{eq:iDswap1} and \eqref{eq:iDshift1}, we obtain
identities for other permutations,
\begin{align}
  \label{eq:swap23}
    &\iD(\cL_1,\cL_2,\cL_3) + \iD(\cL_1,\cL_3,\cL_2)
      = n - \dim \cL_2 \cap \cL_3, \\
  \label{eq:swap13}
    &\iD(\cL_1,\cL_2,\cL_3) + \iD(\cL_3,\cL_2,\cL_1)
      = n - \dim \cL_1 \cap \cL_2 - \dim \cL_2 \cap \cL_3
      + \dim\cL_1\cap\cL_3, 
\end{align}
and the important special cases
\begin{equation}
  \label{eq:iD_special_cases}
  \iD(\cL, \cL, \cL_3) = 0,
  \qquad
  \iD(\cL_1, \cL, \cL) = 0,
  \qquad
  \iD(\cL, \cL_2, \cL) = n - \dim\cL\cap\cL_2.
\end{equation}

We finally recall an identity of Zhou, Wu and 
Zhu that relates the difference of
Maslov indices, with different reference planes, to the difference 
of Duistermaat indices:
\begin{equation}
    \Mas_{[a,b]}\!\big(\cL_2, \cM(\cdot)\big) -
    \Mas_{[a,b]}\!\big(\cL_1, \cM(\cdot)\big)
  =
    \iD\!\big(\cL_1, \cL_2, \cM(b) \big) - \iD\!\big(\cL_1, \cL_2, \cM(a) \big)
 \label{eq:Hor_ZWZ0}
\end{equation}
for any continuous path $\cM(\cdot)$; see \cite[Thm~1.1]{ZhoWuZhu_fmc18}. 
The difference of Maslov indices is also known as the \emph{H\"ormander index} \cite{Hor_am71}.

\subsection{One-sided limits}

The notion of monotonicity in \Cref{def:increasing_path} allows us to compute one-sided limits of the Duistermaat index.

\begin{theorem}
  \label{prop:continuous}
  Suppose $\cL\colon (-1,1) \to \Lagr$ is a
  continuous path that is differentiable and increasing on
  $(-1,0) \cup (0,1)$, and set $\cL_0 := \cL(0)$. For any $\cL_1, \cL_2, \cL_3 \in \Lagr$ and $0 < |t| \ll 1$ we have
  \begin{align}
    \label{eq:limit1}
    \iD\!\big(\cL(t), \cL_2, \cL_3 \big)
    &= \iD(\cL_0, \cL_2, \cL_3) +
      \begin{cases}
        0, & t<0, \\
        \dim \cL_2 \cap \cL_0 - \dim \cL_3 \cap \cL_0,
        & t >0,
      \end{cases} \\
    \label{eq:limit1alt}
    &\qquad=
      \begin{cases}
        \iD (\cL_0, \cL_2, \cL_3 ), & t<0,\\
        \iD (\cL_2, \cL_3, \cL_0 ), & t>0,
      \end{cases} \\
    \label{eq:limit2}
    \iD\!\big(\cL_1, \cL(t), \cL_3\big)
    &= \iD(\cL_1, \cL_0, \cL_3) +
      \begin{cases}
        \dim \cL_1 \cap \cL_0, & t < 0, \\
        \dim \cL_3 \cap \cL_0, & t > 0,
      \end{cases} \\
    \label{eq:limit3}
    \iD\!\big(\cL_1, \cL_2, \cL(t)\big)
    &= \iD(\cL_1, \cL_2, \cL_0) +
      \begin{cases}
        \dim \cL_2 \cap \cL_0 - \dim \cL_1 \cap \cL_0, & t < 0, \\
        0, & t > 0,
      \end{cases} \\ \label{eq:limit3alt}
    &\qquad=
      \begin{cases}
        \iD(\cL_0, \cL_1, \cL_2), & t<0,\\
        \iD(\cL_1, \cL_2, \cL_0), & t>0.
      \end{cases}
  \end{align}
\end{theorem}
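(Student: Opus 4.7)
The plan is to prove \eqref{eq:limit3} first, by combining the H\"ormander-Zhou-Wu-Zhu identity \eqref{eq:Hor_ZWZ0} with the crossing-form formula for the Maslov index of an increasing path, and then to derive the remaining four formulas from \eqref{eq:limit3} using the permutation identities collected at the end of Section~3.1.

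Fix Lagrangian planes $\cL_1, \cL_2, \cL_3$. Since $\cL(\cdot)$ is $C^1$ and increasing on each of $(-1,0)$ and $(0,1)$, its crossings with any fixed Lagrangian plane on either open interval are isolated; combined with the finiteness of the Maslov index on any compact subinterval of $(-1,1)$ and its additivity under concatenation, this yields $\delta>0$ such that $\cL(s)\cap\cL_j = 0$ for all $0<|s|<\delta$ and $j=1,2,3$. Now apply \eqref{eq:Hor_ZWZ0} with $\cM(\cdot)=\cL(\cdot)$. For $0<t<\delta$, the counting formula \eqref{eq:Maslov_intersections2} runs over $(0,t]$ (which contains no crossings with $\cL_1$ or $\cL_2$), so both Maslov indices vanish and \eqref{eq:Hor_ZWZ0} gives $\iD(\cL_1,\cL_2,\cL(t)) = \iD(\cL_1,\cL_2,\cL_0)$, which is the $t>0$ case of \eqref{eq:limit3}. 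For $-\delta<t<0$, the sum in \eqref{eq:Maslov_intersections2} runs over $(t,0]$ and picks up only the endpoint crossing at $s=0$, so $\Mas_{[t,0]}(\cL_j,\cL(\cdot)) = -\dim(\cL_0\cap\cL_j)$ for $j=1,2$; substituting into \eqref{eq:Hor_ZWZ0} and rearranging produces the $t<0$ case of \eqref{eq:limit3}. The alternative form \eqref{eq:limit3alt} then follows by a single application of the cyclic identity \eqref{eq:iDshift1}, which converts $\iD(\cL_1,\cL_2,\cL_0) + \dim\cL_2\cap\cL_0 - \dim\cL_1\cap\cL_0$ into $\iD(\cL_0,\cL_1,\cL_2)$.

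To derive \eqref{eq:limit2}, apply \eqref{eq:iDshift1} to rewrite $\iD(\cL_1,\cL(t),\cL_3) = \iD(\cL_3,\cL_1,\cL(t)) + \dim\cL_1\cap\cL_3 - \dim\cL(t)\cap\cL_3$; since $\cL(t)\cap\cL_3 = 0$ for $0<|t|<\delta$, this reduces the problem to evaluating $\iD(\cL_3,\cL_1,\cL(t))$ via \eqref{eq:limit3} and converting back through \eqref{eq:iDshift1}. For \eqref{eq:limit1}, use the swap identity \eqref{eq:iDswap1} to write $\iD(\cL(t),\cL_2,\cL_3) = n - \dim\cL(t)\cap\cL_2 - \iD(\cL_2,\cL(t),\cL_3)$, which (using $\cL(t)\cap\cL_2=0$ for small $|t|$) reduces to \eqref{eq:limit2} followed by one more application of \eqref{eq:iDswap1}. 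Finally, \eqref{eq:limit1alt} follows from \eqref{eq:limit1} via \eqref{eq:iDshift1}.

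The principal technical subtlety is ensuring that the crossings of $\cL(\cdot)$ with the fixed planes $\cL_j$ do not accumulate at $t=0$, despite the loss of differentiability there. This is secured by the fact that the Maslov index of the continuous path $\cL(\cdot)|_{[\epsilon,t]}$ is a finite integer that remains uniformly bounded as $\epsilon \to 0^+$, because the Maslov index of $\cL(\cdot)|_{[0,t]}$ is itself a well-defined integer by homotopy invariance. Once this is in place, all five identities reduce to routine bookkeeping with the permutation identities of Section~3.1; the main risk is sign errors or dropped dimensional corrections, but the swap, cyclic, and cocycle relations are tight enough that the algebraic verification is fully mechanical.
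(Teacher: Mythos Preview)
Your overall strategy is sound and genuinely different from the paper's: you route through the Zhou--Wu--Zhu identity \eqref{eq:Hor_ZWZ0} and the crossing-count formula \eqref{eq:Maslov_intersections2}, whereas the paper works directly with the definition \eqref{eq:Dui_triple_def}, showing that each form $Q_t := Q(\cL_j,\hat\cL;\cL(t))$ is continuous in $t$ and has positive $t$-derivative on $(-1,0)\cup(0,1)$ (via \eqref{Q-St}), so the mean value theorem gives monotonicity across $t=0$ and the behavior of $n_-(Q_t)$ follows immediately.

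There is, however, a real gap in your argument. You apply \eqref{eq:Maslov_intersections2} on $[0,t]$ and $[t,0]$, but that formula is stated for $C^1$ increasing paths, and $\cL(\cdot)$ is only continuous at $0$. Your stated fix (finiteness of the Maslov index forces non-accumulation of crossings) is correct as far as it goes, but it addresses the wrong issue: once you know there are no crossings in $(0,\delta)$, you still need to know that the eigenvalues of $W(s)$ sitting at $1$ at $s=0$ depart \emph{counterclockwise}, since with the spectral-flow definition via $\lceil\theta_k/2\pi\rceil$ a clockwise departure would give $\Mas_{[0,t]}(\cL_j,\cL(\cdot)) = \dim(\cL_0\cap\cL_j)$ rather than $0$. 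This can be closed by noting that on $(0,\delta)$ the path is $C^1$ increasing, so each $\theta_k$ is strictly increasing there; continuity at $0$ then forces $\theta_k(s)>\theta_k(0)$ for small $s>0$. The paper's $Q$-form argument sidesteps this entirely, since the mean value theorem needs only continuity on the closed interval and differentiability on the open one.
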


In particular, we see that $\iD(\cL_1, \cL_2, \cL_3)$ is left-continuous in $\cL_1$ and right-continuous in $\cL_3$, but in general is neither right- nor left-continuous in $\cL_2$.

\begin{proof}
We first prove \eqref{eq:limit3}, as it follows most directly from the definition of $\iD$. Choose $\hat\cL$ that is transversal to $\cL_1$, $\cL_2$ and $\cL_0$ (and hence to $\cL(t)$ for small $t$). Recalling the definition in \eqref{eq:Dui_triple_def}, we have
\begin{equation}
  \label{eq:Dui_triple_def2}
  \iD\!\big(\cL_1,\cL_2,\cL(t) \big)
  = n_-\big(Q(\cL_2, \hat\cL; \cL(t))\big)
    - n_-\big(Q(\cL_1, \hat\cL;  \cL(t))\big)
    + n_-\big(Q(\cL_1, \hat\cL; \cL_2)\big).
\end{equation}
Starting with the first term on the right-hand side, we abbreviate
$Q_t := Q(\cL_2, \hat\cL; \cL(t))$. This acts by
$Q_t[u] = \omega(u, L_t u)$, where $L_t \colon \cL_2 \to \hat\cL$ is
such that $\cL(t) = \{u+L_tu \colon u \in \cL_2\}$. From
\Cref{THMform}, namely \eqref{Q-St} with $\cL^\sharp = \cL_2$, we see
that $Q_t'[u] = \form[u + L_t u]$ for any $u \in \cL_2$, therefore
$Q_t'$ is positive for $t \in (-1,0) \cup (0,1)$. It follows from the
mean value theorem that $Q_t$ is increasing on $(-1,1)$, therefore
$n_-(Q_t) = n_-(Q_0)$ for small $t>0$, and
\begin{equation}
  n_-\big(Q_t) = n_-(Q_0) + \dim \ker Q_0 
  =  n_-(Q_0) + \dim \cL_2 \cap \cL_0
\end{equation}
for small $t<0$, where we have used \eqref{eq:Qkernel}. An analogous formula holds for $n_-\big(Q(\cL_1, \hat\cL; \cL(t))\big)$. Using this in \eqref{eq:Dui_triple_def2} completes the proof of \eqref{eq:limit3}.

To prove \eqref{eq:limit1} we combine \eqref{eq:limit3} with the
identities
\begin{align}
  \label{eq:iD_swap0}
  &\iD(\cL_2, \cL_3, \cL_0)
    = \iD(\cL_0, \cL_2, \cL_3) + \dim \cL_2 \cap \cL_0 - \dim \cL_3
    \cap \cL_0,
  \\
    \label{eq:iDt_swap1}
  &\iD\!\big(\cL_2, \cL_3, \cL(t) \big)
    = \iD\!\big(\cL(t), \cL_2, \cL_3 \big),
    \qquad
    0 < |t| \ll 1,
\end{align}
which follow from \eqref{eq:iDshift1} and the observation
that $\cL(t)$ is transversal to $\cL_2$ and $\cL_3$ except at isolated
values of $t$.  The proof of \eqref{eq:limit2} is analogous.
\end{proof}

\begin{corollary}
  \label{cor:n_minus_Q}
  Under the assumption $\cL_1 \cap \cL_3 = 0 = \cL_2\cap\cL_3$ we have
  \begin{equation}
    \label{eq:n_minus_Q_copy}
    \iD(\cL_1,\cL_2,\cL_3) = n_-\big(Q(\cL_1,\cL_3;\cL_2)\big).
  \end{equation}
\end{corollary}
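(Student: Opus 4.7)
The plan is to combine the one-sided limit result (\Cref{prop:continuous}) with the definition \eqref{eq:Dui_triple_def} of $\iD$, in which the auxiliary Lagrangian $\hat\cL$ is taken to be $\cL_3$ itself after a small perturbation. To enable this, I would first choose a differentiable, monotone increasing path $\cL\colon(-\epsilon,\epsilon)\to\Lagr$ with $\cL(0)=\cL_3$; such a path exists, e.g., by parametrizing $\cL_3$ via the Cayley transform and rotating. The transversality hypotheses then persist for $\cL(t)$ at small $t\neq 0$: $\cL_j\cap\cL(t)=0$ for $j=1,2$ by continuity, while $\cL_3\cap\cL(t)=0$ for $t\neq 0$ small, because the monotonicity forces crossings to be isolated.

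For $0<t\ll 1$, the plane $\cL_3$ is thus transversal to $\cL_1$, $\cL_2$, and $\cL(t)$, so \eqref{eq:Dui_triple_def} with $\hat\cL=\cL_3$ gives
\begin{equation*}
  \iD\!\big(\cL_1,\cL_2,\cL(t)\big)
  = n_-\big(Q(\cL_2,\cL_3;\cL(t))\big) - n_-\big(Q(\cL_1,\cL_3;\cL(t))\big) + n_-\big(Q(\cL_1,\cL_3;\cL_2)\big).
\end{equation*}
The left-hand side equals $\iD(\cL_1,\cL_2,\cL_3)$ by the right-continuity formula \eqref{eq:limit3alt}. To conclude, I would prove the stronger statement that $Q(\cL_j,\cL_3;\cL(t))$ is \emph{negative definite} for $j=1,2$ and all sufficiently small $t>0$, so that each of the two $n_-$ terms equals $n$ and they cancel, leaving the desired identity $\iD(\cL_1,\cL_2,\cL_3) = n_-(Q(\cL_1,\cL_3;\cL_2))$.

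To establish negative definiteness, I would use a graph parametrization $\cL(t) = \{v + M(t)v : v \in \cL_3\}$ over $\cL_3$, with target a fixed auxiliary $\hat\cL$ transversal to $\cL_3$ and the $\cL_j$; then $M(0)=0$ and the crossing form on $\cL_3$ is $\form[v] = \omega(v, M'(0)v)>0$. Using the decomposition $\cK\oplus\cK = \cL_j\oplus\cL_3$ with projections $\pi_j,\pi_3$, I would convert this into a formula for the operator $L_j(t)\colon\cL_j\to\cL_3$ defined by $\cL(t) = \{u + L_j(t)u : u\in\cL_j\}$, obtaining the simple-pole expansion
\begin{equation*}
  L_j(t) = t^{-1}\big(\pi_j M'(0)\big)^{-1} + O(1) \quad \text{as } t\to 0^+,
\end{equation*}
where invertibility of $\pi_j M'(0)\colon\cL_3\to\cL_j$ follows from positive definiteness of $\form$ together with $\hat\cL\cap\cL_3=0$. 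A short Lagrangian-identity calculation then rewrites the leading coefficient of $Q(\cL_j,\cL_3;\cL(t))[u] = \omega(u, L_j(t)u)$ as $-t^{-1}\form\!\big[(\pi_j M'(0))^{-1}u\big]$, which is strictly negative on $\cL_j\setminus\{0\}$.

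The main obstacle is this last computation: extracting the correct leading-order pole of $L_j(t)$, verifying the invertibility of $\pi_j M'(0)$, and identifying the sign of the leading coefficient via the Lagrangian property of $\cL_3$. Once these ingredients are assembled, substitution into the displayed identity gives the corollary.
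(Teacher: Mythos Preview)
Your approach is correct and follows the same opening as the paper: perturb $\cL_3$ along an increasing path $\cL(t)$, invoke the right-continuity in \eqref{eq:limit3} to identify $\iD(\cL_1,\cL_2,\cL_3)=\iD\big(\cL_1,\cL_2,\cL(t)\big)$ for small $t>0$, and then take $\hat\cL=\cL_3$ in the defining formula \eqref{eq:Dui_triple_def}. The divergence is in how the two remaining terms are eliminated. You argue directly that $Q(\cL_j,\cL_3;\cL(t))$ is negative definite by extracting a simple pole of $L_j(t)$ and identifying its residue with (minus) the crossing form; this works, and your sketch of the invertibility of $\pi_jM'(0)$ and the sign computation via the Lagrangian property of $\cL_3$ is accurate. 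The paper instead applies the elementary swap identity $n_-\big(Q(\alpha,\beta;\gamma)\big)+n_-\big(Q(\beta,\alpha;\gamma)\big)=n$ (valid for pairwise transversal triples) to replace $Q(\cL_j,\cL_3;\cL(t))$ by $Q(\cL_3,\cL_j;\cL(t))$, which is an \emph{increasing} family of forms in $t$ (this is exactly the content of \eqref{Q-St}) vanishing at $t=0$, hence positive definite for small $t>0$ with no pole analysis required. Your route is more self-contained but computationally heavier; the paper's route trades one extra identity for a much shorter endgame.
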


\begin{proof}
  Let $\cL(t)$ be an increasing path with $\cL(0) = \cL_3$. Since $\cL_3$
  is transversal to $\cL_1$ and $\cL_2$,  \Cref{prop:continuous} implies
  $\iD(\cL_1, \cL_2, \cL_3) = \iD(\cL_1, \cL_2, \cL(t))$ for
  $|t| \ll 1$.  For small nonzero $t$ we can choose $\hat\cL = \cL_3$
  in the definition \eqref{eq:Dui_triple_def} of $\iD(\cL_1, \cL_2, \cL(t))$ to get
  \begin{equation}
    \iD\!\big(\cL_1, \cL_2, \cL(t)\big)
    = n_-\big(Q(\cL_2,\cL_3; \cL(t))\big)
    - n_-\big(Q(\cL_1,\cL_3; \cL(t))\big)
    + n_-\big(Q(\cL_1,\cL_3; \cL_2) \big).
  \end{equation}
  Using the identity
  $n_-\big(Q(\alpha, \beta; \gamma)\big) + n_-\big(Q(\beta, \alpha;
  \gamma)\big) = n$, valid when $\alpha,\beta$ and $\gamma$ are pairwise 
  transversal,
  we can
  rewrite this as
  \begin{equation}
    \iD\!\big(\cL_1, \cL_2, \cL(t)\big)
    = n_-\big(Q(\cL_3,\cL_1; \cL(t))\big)
    - n_-\big(Q(\cL_3,\cL_2; \cL(t))\big)
    + n_-\big(Q(\cL_1,\cL_3; \cL_2) \big).
  \end{equation}
  The quadratic form $Q(\cL_3,\cL_1; \cL(t))$ is increasing in $t$ and
  is identically zero when $t=0$, thus
  $n_-\big(Q(\cL_3,\cL_1; \cL(t))\big) = 0$ for small positive $t$ and
  similarly for $n_-\big(Q(\cL_3,\cL_2; \cL(t))\big)$.
\end{proof}

\subsection{Computing with frames}
\label{sec:frames}

We now give a simple formula for computing the
Duistermaat index using linear algebra. First, we recall that any $n$-dimensional subspace 
$ \cM \subset \cK \oplus \cK$ can be described by a \term{frame}, which is 
an injective linear operator
\begin{equation}
  \label{eq:frame}
	Z = \Lframe{X}{Y} \colon \cK \to \cK\oplus\cK,
\end{equation}
whose range is $\cM$. Moreover, $\cM$ is Lagrangian if and only if 
$X^*Y = Y^*X$ (see Appendix~\ref{sec:param} for a review of this and
other parametrizations of Lagrangian planes).

This description is not unique, but it is easy to see that frames $Z$ and $\tilde Z$ describe the 
same subspace if and only if $\tilde Z = ZC$ for some invertible 
$C \colon \cK \to \cK$. Therefore, the set
\begin{equation}
\label{Edef}
	E(\cM) := \{ \epsilon \in \bbR : X + \epsilon Y \text{ is not invertible} \}
\end{equation}
and the operator
\begin{equation}
  \label{eq:almostDTN}
  R^\epsilon := Y(X + \epsilon Y) ^{-1},
  \qquad
  \epsilon \in \R \backslash E(\cM),
\end{equation}
are independent of the choice of frame. The set $E(\cM)$ is finite,
since $\det(X + \epsilon Y)$ is a polynomial in $\epsilon$ that is not
identically zero because $Z$ has rank $n$. When $\cM$ is Lagrangian,
the condition $X^*Y = Y^*X$ implies that
\begin{equation}
  \label{eq:HermitianR}
  (X+\epsilon Y)^* R^\epsilon (X+\epsilon Y) = X^*Y + \epsilon Y^* Y
\end{equation}
is Hermitian, therefore $R^\epsilon$ is Hermitian.

\begin{remark}
  \label{rem:RM}
For the Cauchy data space $\cM(z)$ defined in \eqref{eq:CDS_def},  the 
corresponding $R^\epsilon(z)$ acts by $u \mapsto \Gamma_1 f$, where $f \in \ker(\minop^* - z)$ 
satisfies $\Gamma_0 f + \epsilon \Gamma_1 f = u$. (The condition
$\epsilon \in \R\setminus E\big(\cM(z)\big)$ 
guarantees there is a unique such $f$ for each $u \in \cK$.)   In particular, $R^0(z)$ is the Dirichlet-to-Neumann map, whenever it is is defined. We thus refer to the operator $R^\epsilon$ in \eqref{eq:almostDTN} as the \emph{$\epsilon$-Robin map}, whether or not the corresponding subspace $\cM$ is the Cauchy data space.
\hfill$\Diamond$\end{remark}

  An intuitive description of $R^\epsilon$ is the ``regularized
  slope'' of $\cM$, as drawn in $\cK \oplus \cK$.  Referring to \Cref{fig:iD_examples}, 
  it is therefore natural that the Duistermaat
  index can be computed by comparing slopes of pairs of planes, 
  in the following sense.

\begin{theorem}
  \label{prop:Dui_index}
  Let $R^\epsilon_1$, $R^\epsilon_2$ and $R^\epsilon_3$ be the $\epsilon$-Robin maps
  corresponding to the Lagrangian planes $\cL_1$, $\cL_2$ and $\cL_3$.  Then the Duistermaat index
  $\iD(\cL_1,\cL_2,\cL_3)$ is given by
  \begin{equation}
      \label{eq:Dui_index_gen}
      \iD(\cL_1,\cL_2,\cL_3) 
	= \ n_-\Big( R^\epsilon_2-R^\epsilon_1 \Big)
	+ n_-\Big( R^\epsilon_3-R^\epsilon_2 \Big)
	- n_-\Big( R^\epsilon_3-R^\epsilon_1 \Big)
  \end{equation}
  for any $\epsilon \in \R \backslash E_{123}$, where
  $E_{123} := E(\cL_1)\cup E(\cL_2)\cup E(\cL_3)$ is a finite set.
  In particular, for the vertical plane
  $\cV = 0 \oplus \cK$ we have
  \begin{equation}
    \label{eq:Dui_index_L3vertical}
    \iD(\cL_1,\cL_2,\cV) 
    = \ n_-\Big( R^\epsilon_2-R^\epsilon_1 \Big)
  \end{equation}
  for any $0 < \epsilon \ll 1$.
\end{theorem}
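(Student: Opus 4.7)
The plan is to verify both formulas by a judicious choice of the auxiliary transversal Lagrangian $\hat\cL$ in the definition \eqref{eq:Dui_triple_def}. Specifically, I would introduce the one-parameter family
\[
  \hat\cL^\epsilon := \bigl\{ (-\epsilon v, v) : v \in \cK \bigr\} \subset \cK\oplus\cK,
\]
which has frame $(-\epsilon I, I)^\top$ and is Lagrangian for every real $\epsilon$ (the condition $X^*Y = Y^*X$ is immediate). A direct determinantal calculation with a frame $(X_i, Y_i)^\top$ of $\cL_i$ shows that $\cL_i$ is transversal to $\hat\cL^\epsilon$ if and only if $X_i + \epsilon Y_i$ is invertible, i.e., $\epsilon \notin E(\cL_i)$. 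Hence for any $\epsilon \in \R\setminus E_{123}$ the plane $\hat\cL^\epsilon$ is simultaneously transversal to $\cL_1,\cL_2,\cL_3$, so it is a legitimate choice in the definition \eqref{eq:Dui_triple_def}.

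The core of the argument is then to compute the quadratic form $Q(\cL_i, \hat\cL^\epsilon; \cL_j)$ explicitly. Parametrizing $u \in \cL_i$ by $u = (X_i a, Y_i a)$, one writes $\cL_j$ as the graph of the linear map $L\colon \cL_i \to \hat\cL^\epsilon$ with $Lu = (-\epsilon b, b)$; a short manipulation using $c = (X_j+\epsilon Y_j)^{-1}(X_i+\epsilon Y_i)a$ gives
\[
  b \;=\; (R^\epsilon_j - R^\epsilon_i)(X_i+\epsilon Y_i)a.
\]
Substituting into $\omega(u_1, Lu_2)$ and exploiting the change of variable $\tilde a := (X_i + \epsilon Y_i)a$ (a bijection of $\cK$ since $X_i + \epsilon Y_i$ is invertible), one obtains
\[
  Q(\cL_i, \hat\cL^\epsilon; \cL_j)(u_1,u_2)
  \;=\; \bigl\langle \tilde a_1,\, (R^\epsilon_j - R^\epsilon_i)\,\tilde a_2 \bigr\rangle_{\cK}.
\]
Thus $Q$ is congruent to the Hermitian form $R^\epsilon_j - R^\epsilon_i$ on $\cK$ (which is Hermitian by \eqref{eq:HermitianR}), and consequently $n_-\bigl(Q(\cL_i,\hat\cL^\epsilon;\cL_j)\bigr) = n_-(R^\epsilon_j - R^\epsilon_i)$. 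Substituting these three identities into \eqref{eq:Dui_triple_def} yields \eqref{eq:Dui_index_gen} immediately.

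For the vertical plane $\cV = 0 \oplus \cK$, a frame is $(0, I)^\top$, so $E(\cV) = \{0\}$ and $R^\epsilon_\cV = (1/\epsilon)\,I$. For small positive $\epsilon$ the operator $(1/\epsilon)I$ dominates the bounded Hermitian operators $R^\epsilon_1, R^\epsilon_2$, so both $R^\epsilon_\cV - R^\epsilon_1$ and $R^\epsilon_\cV - R^\epsilon_2$ are positive definite, giving $n_-(R^\epsilon_\cV - R^\epsilon_i) = 0$ for $i=1,2$. Only the middle term in \eqref{eq:Dui_index_gen} survives, which is \eqref{eq:Dui_index_L3vertical}. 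The only step requiring any care is the frame-level algebra in the congruence argument, but it reduces to inverting $X_j + \epsilon Y_j$ — which the transversality condition guarantees — and there is no real obstacle.
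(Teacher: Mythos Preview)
Your argument for \eqref{eq:Dui_index_gen} is correct and is in fact a slightly cleaner route than the paper's: rather than first establishing the $\epsilon=0$ case with $\hat\cL=\cV$ and then invoking the symplectic transformation $g_\epsilon = \left(\begin{smallmatrix} I & \epsilon I \\ 0 & I \end{smallmatrix}\right)$ to reduce the general case to it, you take $\hat\cL = \hat\cL^\epsilon$ directly in \eqref{eq:Dui_triple_def} and compute $Q$ once. The two approaches are really the same idea (your $\hat\cL^\epsilon$ is exactly $g_\epsilon^{-1}\cV$), but yours avoids the two-step structure.

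Your argument for \eqref{eq:Dui_index_L3vertical}, however, has a genuine gap. You assert that for small $\epsilon>0$ the operator $(1/\epsilon)I$ dominates the ``bounded'' operators $R^\epsilon_1, R^\epsilon_2$. But $R^\epsilon_i$ depends on $\epsilon$, and if $\cL_i \cap \cV \neq 0$ (equivalently $X_i$ is singular, i.e., $0\in E(\cL_i)$) then $R^\epsilon_i$ itself has eigenvalues of order $1/\epsilon$. For instance, with $X_i=\operatorname{diag}(1,0)$ and $Y_i=\operatorname{diag}(0,1)$ one gets $R^\epsilon_i=\operatorname{diag}(0,1/\epsilon)$, so $\epsilon^{-1}I - R^\epsilon_i = \operatorname{diag}(1/\epsilon,0)$ is non-negative but not positive definite, and certainly not by a domination argument. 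The hypotheses do not exclude $\cL_i\cap\cV\neq0$, and this case does arise in the applications that follow (e.g.\ Proposition~\ref{prop:BL_index_formula}). The paper closes this gap by the congruence
\[
  \epsilon\,(X+\epsilon Y)^*\bigl(\epsilon^{-1}I - Y(X+\epsilon Y)^{-1}\bigr)(X+\epsilon Y) \;=\; X^*X + \epsilon\, Y^*X,
\]
and then argues that the right-hand side, a Hermitian perturbation of $X^*X\geq0$ whose perturbing term $Y^*X$ vanishes on $\ker X$, stays non-negative for small $\epsilon>0$. You need an argument of this kind; the claim that $R^\epsilon_i$ stays bounded as $\epsilon\to0$ is simply false in general.
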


\begin{proof}
  We first assume that $\cL_1$, $\cL_2$ and $\cL_3$ are transversal to
  $\cV$ and establish \eqref{eq:Dui_index_gen} with $\epsilon=0$,
  using the definition \eqref{eq:Dui_triple_def} with $\hat\cL=\cV$.
  In this case each $\cL_j$ is represented by the frame $(I, R^0_j)$.
  We will use this to compute the index of $Q(\cL_2, \cV; \cL_3)$ to
  be used in \eqref{eq:Dui_triple_def}. To represent $\cL_3$ as the
  graph of an operator $L\colon \cL_2 \to \cV$, we write
  \begin{equation}
    \label{eq:x2_expand}
   	\cL_3 \ni \begin{pmatrix}
      \kappa \\ R^0_3 \kappa
    \end{pmatrix}
    = 
   \begin{pmatrix}
      \kappa \\ R^0_2 \kappa
    \end{pmatrix}
    +
   \begin{pmatrix}
      0 \\ R^0_3\kappa - R^0_2\kappa
    \end{pmatrix} = u + L u \in \cL_2 + \cV.
  \end{equation}
  For any $u_1 = (\kappa_1, R^0_2 \kappa_1)^T$ and
  $u_2 = (\kappa_2, R^0_2 \kappa_2)^T$ in $\cL_2$
we thus obtain
  \begin{equation}
    \label{eq:Q23}
    Q(\cL_2, \cV; \cL_3) \colon (u_1, u_2) \longmapsto
    \omega( u_1, L u_2)
    =
    \left< \kappa_1,  (R^0_3 - R^0_2) \kappa_2 \right>_{\cK}.
  \end{equation}
Since the index is invariant under isomorphism, the form 
  $Q(\cL_2, \cV; \cL_3)$ on $\cL_2$ has the same index as the form 
  $\left< \cdot ,  (R^0_3 - R^0_2) \cdot \right>_{\cK}$ on $\cK$, that is
  \begin{equation}
    \label{eq:index_Q23}
    n_-\big(Q(\cL_2, \cV; \cL_3)\big) = n_-\big(R^0_3 - R^0_2\big).
  \end{equation}
  Evaluating the other two terms in \eqref{eq:Dui_triple_def}
  similarly, we obtain \eqref{eq:Dui_index_gen} with $\epsilon=0$, i.e.,
  \begin{equation}
      \label{eq:Dui_index_gen0}
      \iD(\cL_1,\cL_2,\cL_3) 
	= \ n_-\Big( R^0_2-R^0_1 \Big)
	+ n_-\Big( R^0_3-R^0_2 \Big)
	- n_-\Big( R^0_3-R^0_1 \Big).
  \end{equation}
  
  In the general case, we can use the symplectic transformation
  \begin{equation}
    \label{eq:symplectic_small_rotation}
    g_\epsilon =
    \begin{pmatrix}
      I & \epsilon I \\
      0 & I
    \end{pmatrix}
  \end{equation}
  to make $\cL_1$, $\cL_2$ and $\cL_3$ transversal to
  $\cV$.  In other words, we consider Lagrangian planes
  \begin{equation}
    \label{eq:Leps}
    \cL^\epsilon_j := g^\epsilon (\cL_j) = \Ran
    \begin{pmatrix}
      X_j + \epsilon Y_j \\ Y_j
    \end{pmatrix},
  \end{equation}
  with $\epsilon$ chosen so that all $X_j+\epsilon Y_j$ are
  invertible. From \eqref{eq:symplectic_invariant} we have $\iD(\cL_1,\cL_2,\cL_3) =
  \iD(\cL^\epsilon_1,\cL^\epsilon_2,\cL^\epsilon_3)$, so we can compute
  $\iD(\cL^\epsilon_1,\cL^\epsilon_2,\cL^\epsilon_3)$ according to
  \eqref{eq:Dui_index_gen0} and thus obtain
  \eqref{eq:Dui_index_gen}.

  Finally, we consider the case $\cL_3 = \cV$ and prove \eqref{eq:Dui_index_L3vertical}.  We have
  $R^\epsilon_3 = \epsilon^{-1} I$, so the result follows once we
  establish that for any Lagrangian frame $(X, Y)^T$, the operator
  $\epsilon^{-1} I - Y(X+\epsilon Y)^{-1}$ is non-negative definite.
  It is equivalent to consider 
  \begin{equation}
    \label{eq:similarity}
    \epsilon (X+\epsilon Y)^*
    \left(\epsilon^{-1} I - Y(X+\epsilon Y)^{-1}\right)
    (X+\epsilon Y)
    =  X^* X+\epsilon Y^* X.
  \end{equation}
  The right-hand side is a Hermitian perturbation of
  the non-negative operator $X^*X$.  As functions of $\epsilon$, zero
  eigenvalues of the unperturbed problem remain identically zero,
  since the perturbation $Y^*X$ vanishes on $\ker(X^*X) = \ker X$.  On the
  other hand, the non-zero eigenvalues are positive at $\epsilon=0$
  and thus remain bounded away from zero for small $\epsilon$.
\end{proof}

Using \Cref{prop:Dui_index}, we obtain a formula for the
Duistermaat index in the case when $\cL_3 = \cV = 0\oplus\cK$ and
$\cL_1 \cap \cV = 0$.  We remark here that any Lagrangian plane
can be described in terms of a frame $(X,Y) = (P,\, P \Theta P + P - I)$, where
$P \colon \cK \to \cK$ is an orthogonal projector and $\Theta$ is a
Hermitian operator acting on $\Ran P$; see \Cref{sec:param}.

\begin{proposition}
  \label{prop:BL_index_formula}
  Suppose the planes $\cL_1$ and $\cL_2$ are
  described by the frames $(I,\, M)$ and $(P,\, P \Theta P + P - I )$,
respectively.  Then
  \begin{equation}
    \label{eq:BL_index_formula}
    \iD(\cL_1, \cL_2, \cV) = n_-(\Theta - P M P).
  \end{equation}
\end{proposition}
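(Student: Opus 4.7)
The plan is to apply formula \eqref{eq:Dui_index_L3vertical} from \Cref{prop:Dui_index}, which gives
$\iD(\cL_1, \cL_2, \cV) = n_-\big(R^\epsilon_2 - R^\epsilon_1\big)$ for $0 < \epsilon \ll 1$, and then to pass carefully to the limit $\epsilon \to 0^+$. The frames in the proposition are arranged so that, once a divergent piece of $R^\epsilon_2 - R^\epsilon_1$ is removed via a Schur complement, the remaining block converges to $\Theta - PMP$.

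First I would compute the Robin maps. For $\cL_1$, the frame $(I, M)$ gives immediately $R^\epsilon_1 = M(I+\epsilon M)^{-1}$. For $\cL_2$, I work in the orthogonal decomposition $\cK = \Ran P \oplus \ker P$, in which the frame $(P, P\Theta P + P - I)$ is block-diagonal; a short calculation then yields $R^\epsilon_2 = \diag\!\big(\Theta(I+\epsilon\Theta)^{-1},\, \epsilon^{-1} I_{\ker P}\big)$. The $\epsilon^{-1} I$ in the $\ker P$ block blows up as $\epsilon \to 0^+$, so naive continuity cannot be applied to $T(\epsilon) := R^\epsilon_2 - R^\epsilon_1$.

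To extract the correct limit I would write $T(\epsilon)$ in the same block form and invoke Haynsworth's inertia additivity. For small $\epsilon > 0$ the $(2,2)$-block $\tilde D(\epsilon) = \epsilon^{-1} I - QM(I+\epsilon M)^{-1}Q|_{\ker P}$ is positive definite, so $n_-(T(\epsilon)) = n_-\big(S(\epsilon)\big)$ where $S(\epsilon)$ is the Schur complement. Since $\tilde D(\epsilon)^{-1} = O(\epsilon)$, the off-diagonal correction is $O(\epsilon)$ and $S(\epsilon)$ converges continuously to $\Theta - PMP|_{\Ran P}$ as $\epsilon \to 0^+$.

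The main obstacle is that $n_-$ is only lower semi-continuous, so equating $\lim_{\epsilon \to 0^+} n_-\big(S(\epsilon)\big)$ with $n_-(\Theta - PMP)$ is not automatic when $\Theta - PMP$ has a nontrivial kernel (equivalently, when $\cL_1 \cap \cL_2 \neq 0$). I would overcome this by tracking kernel dimensions. Symplectic invariance under the rotation $g_\epsilon$ from \eqref{eq:symplectic_small_rotation} identifies $\dim \ker T(\epsilon) = \dim \cL_1 \cap \cL_2$, while a direct unwinding of the two frames shows $\dim \cL_1 \cap \cL_2 = \dim \ker(\Theta - PMP)$. Combined with the Haynsworth identity $\dim \ker T(\epsilon) = \dim \ker S(\epsilon)$, this forces $\dim \ker S(\epsilon) = \dim \ker(\Theta - PMP)$ on a punctured neighborhood of $\epsilon = 0$. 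Preservation of the kernel dimension, together with continuity of the eigenvalues of $S(\epsilon)$, prevents zero eigenvalues of $\Theta - PMP$ from bifurcating into nonzero eigenvalues of $S(\epsilon)$, so $n_-\big(S(\epsilon)\big) = n_-(\Theta - PMP)$ and the proof is complete.
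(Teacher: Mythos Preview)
Your proof is correct and shares its core machinery with the paper's argument: both compute $R_2^\epsilon - R_1^\epsilon$ in the block decomposition $\cK = \Ran P \oplus \ker P$, observe that the $\ker P$ block is $\epsilon^{-1}I + O(1)$ and hence positive definite, and apply Haynsworth's inertia formula to reduce to the Schur complement $S(\epsilon) = \Theta - PMP + O(\epsilon)$. The two proofs diverge only in how they handle the possible kernel of $\Theta - PMP$. The paper perturbs $\cL_1$ to the increasing path $\cL(t)$ with frame $(I, M+tI)$ and invokes the left-continuity of $\iD$ in its first argument (\Cref{prop:continuous}); this makes $\Theta - PMP - tI$ invertible for small $t<0$, so the $\epsilon \to 0^+$ limit is unambiguous, and then one lets $t \to 0^-$. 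Your route instead pins down $\dim\ker S(\epsilon)$ directly, via the chain $\dim\ker S(\epsilon) = \dim\ker T(\epsilon) = \dim(\cL_1^\epsilon \cap \cL_2^\epsilon) = \dim(\cL_1 \cap \cL_2) = \dim\ker(\Theta - PMP)$, and then argues that constancy of the kernel dimension together with eigenvalue continuity forces $n_-\big(S(\epsilon)\big) = n_-(\Theta - PMP)$. Your approach is more self-contained, since it does not rely on \Cref{prop:continuous}, at the cost of the extra kernel-dimension computation; the paper's approach is shorter once that continuity result is in hand.
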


This result is inspired by a counting formula in \cite{BehLug_jpa10}
for the eigenvalues of the Laplacian on a metric graph, which we will
rederive in \Cref{sec:BL}.

\begin{proof}
  Let $\cL(t)$ denote the path given by the frames $(I, M+tI)$. Since
  $\cL(t)$ is increasing and $\cL(0) = \cL_1$, \Cref{prop:continuous}
  gives
  \begin{equation}
    \label{eq:cont_left}
    \iD(\cL_1,\cL_2,\cL_3) = \lim_{t\to0-} \iD\!\big(\cL(t),\cL_2,\cL_3\big).
  \end{equation}
  We now use equation~(\ref{eq:Dui_index_L3vertical}) to compute
  $\iD\!\big(\cL(t),\cL_2,\cL_3\big)$.  Writing
  $R_j^\epsilon = Y_j(X_j + \epsilon Y_j)^{-1}$ in block form corresponding
  to the decomposition $\cK = \ker P \oplus \Ran P$, we obtain
  \begin{equation}
    \label{eq:R12diff}
    R_2^\epsilon - R_1^\epsilon = 
    \begin{pmatrix}
      \epsilon^{-1} I - M_{11} - t I & -M_{12} \\
      -M_{21} & \Theta - M_{22} - t I
    \end{pmatrix} + O(\epsilon),
  \end{equation}
  where $M_{11} = (I-P)M(I-P)$, $M_{22} = PMP$ and so on.  The
  top-left block is strictly positive (and in particular invertible)
  for small $\epsilon>0$, so the Haynsworth formula \cite{Hay_laa68}
  implies
  \begin{align}
    n_-\left(R_2^\epsilon - R_1^\epsilon\right) 
    &=
      n_-\left(\Theta - M_{22} - t I + O(\epsilon)
      + M_{21} \left(\epsilon^{-1} I + O(1)\right)^{-1} M_{12} \right) \nonumber \\
    &= n_-\big(\Theta - M_{22} - t I + O(\epsilon) \big)
      \label{eq:Hainsworth}
  \end{align}
  as $\epsilon \to 0$. 
  The operator $\Theta - M_{22} -t I_2$ is invertible for all
  $t$ in some interval $(t_*, 0)$, therefore
  \begin{equation}
    \label{eq:eps_limit}
    \iD\!\big(\cL(t),\cL_2,\cL_3 \big) = \lim_{\epsilon \to 0+}
    n_-\left(R_2^\epsilon - R_1^\epsilon\right)
    = n_-\left(\Theta - M_{22} - t I_2\right)
    = n_-\left(\Theta - M_{22} + |t| I_2\right).
  \end{equation} 
  On the other hand, since negative eigenvalues cannot be produced
  by a small positive perturbation, we have, for sufficiently small $t$,
  \begin{equation}
    \label{eq:final_limit}
    \iD\!\big(\cL(t),\cL_2,\cL_3\big)
    = n_-\left(\Theta - M_{22}\right)
    = n_-\left(\Theta - PMP\right),
  \end{equation}
  completing the proof.
\end{proof}

Finally, we give a corollary that will be useful in applications, and also clearly 
illustrates the idea that the index $\iD(\cL_1,\cL_2,\cL_3)$ quantifies how much of $\cL_3$
 lies ``between" $\cL_1$ and $\cL_2$ (in the special case that $\cL_1$ and $\cL_2$ are horizontal and vertical, 
 respectively).

\begin{corollary}
  \label{cor:DNmap}
  Let $\widehat{\cK} \subseteq \cK$ be a subspace and 
  $\Theta \colon \widehat{\cK} \to \widehat{\cK}$ a self-adjoint operator.
  Denote its number of 
  non-negative eigenvalues by $n_{0+}(\Theta)$ and consider the
  Lagrangian plane
  $\cL_\Theta := \big\{ (\kappa, \kappa'+\Theta\kappa) \colon
    \kappa\in\widehat{\cK},\ \kappa'\in\widehat{\cK}^\perp\big\}$.  Then
  \begin{equation}
    \label{eq:DN_iD}
    \iD\!\big( \cK\oplus0,\, 0\oplus\cK,\, \cL_\Theta\big)
    = n_{0+}(\Theta).
  \end{equation}
\end{corollary}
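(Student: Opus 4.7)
The plan is to reduce the corollary to Proposition~\ref{prop:BL_index_formula} by identifying a suitable frame for $\cL_\Theta$ and then using the swap identity \eqref{eq:swap23} to move the vertical plane into the third slot.

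First I would exhibit $\cL_\Theta$ in the frame form required by Proposition~\ref{prop:BL_index_formula}. Letting $P\colon \cK \to \cK$ be the orthogonal projection onto $\widehat{\cK}$, a direct computation shows that every element $(\kappa, \kappa' + \Theta\kappa)$ of $\cL_\Theta$ can be written as $\big(P u,\, (P\Theta P + P - I)u\big)$ with $u = \kappa - \kappa' \in \cK$; conversely, every $u \in \cK$ produces an element of $\cL_\Theta$ in this way. Hence $\cL_\Theta$ is the range of the Lagrangian frame $(P,\, P\Theta P + P - I)$. On the other hand, $\cK \oplus 0$ is the range of $(I,\, 0)$, i.e., it is of the form $(I, M)$ with $M = 0$.

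Next I would apply Proposition~\ref{prop:BL_index_formula} with $\cL_1 = \cK \oplus 0$ and $\cL_2 = \cL_\Theta$, yielding
$$\iD(\cK \oplus 0,\, \cL_\Theta,\, \cV) = n_-\!\big(\Theta - P \cdot 0 \cdot P\big) = n_-(\Theta),$$
where $n_-(\Theta)$ counts the negative eigenvalues of $\Theta$ viewed as a self-adjoint operator on $\widehat{\cK}$. Then the identity \eqref{eq:swap23}, applied with the triple $(\cK\oplus 0,\, \cL_\Theta,\, \cV)$ and $n = \dim \cK$, gives
$$\iD(\cK \oplus 0,\, \cV,\, \cL_\Theta) = n - \dim(\cV \cap \cL_\Theta) - n_-(\Theta).$$

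Finally I would compute the intersection: since $\cV = 0 \oplus \cK$ and a point of $\cL_\Theta$ has first component $\kappa \in \widehat{\cK}$, the condition $\kappa = 0$ forces the second component to range exactly over $\widehat{\cK}^\perp$, so $\cV \cap \cL_\Theta = 0 \oplus \widehat{\cK}^\perp$ and $\dim(\cV \cap \cL_\Theta) = n - \dim \widehat{\cK}$. Substituting gives $\iD(\cK \oplus 0,\, \cV,\, \cL_\Theta) = \dim \widehat{\cK} - n_-(\Theta) = n_{0+}(\Theta)$, where the last equality uses that $\Theta$ has exactly $\dim \widehat{\cK}$ eigenvalues counted with multiplicity. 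No step is substantive enough to constitute a real obstacle; the whole argument amounts to unwinding the frame of $\cL_\Theta$ and applying previously established identities.
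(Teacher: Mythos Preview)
Your proof is correct and follows essentially the same route as the paper: both apply Proposition~\ref{prop:BL_index_formula} with $M=0$ to get $\iD(\cK\oplus0,\cL_\Theta,\cV)=n_-(\Theta)$, compute $\dim(\cV\cap\cL_\Theta)=n-\dim\widehat{\cK}$, and then invoke the swap identity \eqref{eq:swap23}. Your version is slightly more explicit in verifying that $(P,\,P\Theta P+P-I)$ is a frame for $\cL_\Theta$, which the paper takes for granted.
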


\begin{remark}
  If we view $\cL_\Theta$ as a self-adjoint linear relation from $\cK$ to $\cK$,
  then $\Theta$ is the ``operator part'' of
  $\cL_\Theta$, as in \cite[Prop.\ 14.2]{Schmudgen_unboundedSAO}.
\end{remark}

\begin{proof}
  Using \Cref{prop:BL_index_formula} with $\cL_1 = \cK\oplus0$, $\cL_2
  = \cL_\Theta$, $M=0$ and $P$ being the orthogonal projector onto
  $\widehat{\cK}$, we get $\iD( \cK\oplus0, \cL_\Theta, 0\oplus\cK)
    = n_{-}(\Theta)$. Since
    \[
    	n - \dim\big( \cL_\Theta \cap (0\oplus\cK) \big) = n - \dim \widehat{\cK}^\perp = \dim \widehat\cK
	 = n_-(\Theta) + n_{0+}(\Theta),
    \]
	the result follows from \eqref{eq:swap23}.
\end{proof}

\section{The Cauchy data space}
\label{sec:CauchyDataSpace}

The main object in the proof of \Cref{thm:main,thm:shift_Hormander} is the Cauchy 
data space
\begin{equation}
  \label{eq:CDS_recall}
  \cM(z) = \tr \big(\!\ker(\minop^*-z) \big) 
  \subset \cK \oplus \cK
\end{equation}
introduced in \eqref{eq:CDS_def}, where $\tr =(\Gamma_0, \Gamma_1)$.
We now establish its fundamental properties, which will be needed
below, in particular in the proofs of
\Cref{lem:Maslov_counting,thm:resolvent_diff_new}.  In this section we
allow $\cK$ to be infinite dimensional, as the results presented
herein are of independent interest.

Recall that the \term{deficiency} of a closed operator $T \colon X\to Y$ is
the codimension of $\Ran T$ in $Y$, i.e., $\deff T := \dim (Y/\Ran T)$, see
\cite[Sec.~I.3]{EdmundsEvans_spectral}.  
We define 
\begin{equation}
  \label{eq:Phi_minus_def}
  \Phi_-(T) :=\left\{z\in\C \colon \deff (T-z) <\infty  \right\}
\end{equation}
and observe that $z \in \Phi_-(T)$ implies $\Ran(T-z)$ is closed, by \cite[Thm.~I.3.2]{EdmundsEvans_spectral}. 
If $\minop$ is closed and symmetric, then
\begin{equation}
  \label{eq:Phi_minus_ess}
  \C \backslash \spess(H) \subseteq \Phi_-(\minop^*)
\end{equation}
for any self-adjoint extension $H$ of $\minop$, with equality when the
defect numbers of $\minop$ are finite; see
\cite[Cor.~IX.4.2]{EdmundsEvans_spectral}.

We now show that $\cM(z)$ depends on $z$ analytically as long as
$z\in \Phi_-(\minop^*)$.

\begin{theorem}
  \label{thm:analyticM_general}
  Let $\minop$ be a closed, densely defined symmetric operator on a Hilbert space $\cH$
  with equal (possibly infinite) defect numbers, and let $(\cK,\Gamma_0,\Gamma_1)$ be a boundary triplet.
  In a neighborhood of any ${z_0}\in \Phi_-(\minop^*)$ there exists an analytic
  family of invertible operators $G_{z} \in \mathcal{B}(\cK\oplus\cK)$
  such that $G_{z_0}=I_{\cK \oplus \cK}$ and $G_{z} \cM({z_0}) = \cM(z)$.
\end{theorem}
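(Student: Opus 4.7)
My plan is to find a self-adjoint extension $H$ of $\minop$ whose Lagrangian plane $\cL$ is transversal to $\cM(z_0)$, and then analytically continue $\cN_{z_0}$ to nearby $\cN_z$ via a Krein-type resolvent formula. Composing with the trace map and extending by the identity on a complement of $\cM(z_0)$ will produce the desired $G_z$.

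First I would establish the existence of such $H$. For non-real $z_0$, every self-adjoint extension has $z_0\in \rho(H)$, so any $H$ works. For real $z_0$, I claim $\cM(z_0)\subset \cK\oplus\cK$ is Lagrangian: isotropy follows from Green's identity applied to $f,g\in\cN_{z_0}$ (one computes $\omega(\tr f,\tr g)=(z_0-\bar z_0)\langle f,g\rangle_{\cH}$), and $\dim\cM(z_0)=n$ because $\dim\cM(z)=\dim\cN_z-\dim\ker(\minop-z)$ is locally constant by the semi-Fredholm stability of the family $\minop^*-z\colon(\dom(\minop^*),\text{graph norm})\to\cH$ near $z_0\in\Phi_-(\minop^*)$, combined with the closed-range identity $\deff(\minop^*-z)=\dim\ker(\minop-\bar z)$. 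Since every Lagrangian plane in a complex symplectic space admits a Lagrangian transversal complement, the required $H$ exists. With this choice, $\dim\ker(H-z_0)=\dim(\cL\cap\cM(z_0))+\dim\ker(\minop-z_0)=k$, where $k:=\dim\ker(\minop-z_0)<\infty$ by the closed-range theorem and the hypothesis on $z_0$.

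Let $R_H(z):=(H-z)^{-1}$ if $k=0$, and the reduced resolvent $(H-z)^{-1}(I-P_0)$ otherwise (where $P_0$ is the spectral projection of $H$ onto $\ker(H-z_0)$); in either case $R_H(z)$ is bounded and analytic near $z_0$. A direct computation using $\minop^*|_{\dom(H)}=H$ shows
\[
    \minop^* U_z f - z U_z f = -(z-z_0)P_0 f \quad\text{for } U_z f := f + (z-z_0)R_H(z)f,
\]
so $U_z f\in\cN_z$ exactly when $P_0 f=0$. Restricting to $\cN_{z_0}^{\mathrm{reg}} := \cN_{z_0}\ominus\ker(H-z_0)$, which has dimension $(n+k)-k=n$, and observing that $\tr|_{\cN_{z_0}^{\mathrm{reg}}}\colon\cN_{z_0}^{\mathrm{reg}}\to\cM(z_0)$ is an isomorphism (its kernel would lie in $\ker(\minop-z_0)\subset\ker(H-z_0)$ and hence be zero), I obtain an analytic family of isomorphisms
\[
    \tilde U_z \ :=\ \tr\circ U_z\circ\bigl(\tr|_{\cN_{z_0}^{\mathrm{reg}}}\bigr)^{-1}\colon\cM(z_0)\to\cM(z),\qquad \tilde U_{z_0}=I.
\]
Fixing a closed complement $\cQ$ of $\cM(z_0)$ in $\cK\oplus\cK$, I then represent $\cM(z)$ as the graph of an analytic operator $A_z\colon\cM(z_0)\to\cQ$ with $A_{z_0}=0$ (extracted from $\tilde U_z$ by projecting onto the two summands), and define
\[
    G_z(u+q) := (u+A_z u) + q,\qquad u\in\cM(z_0),\ q\in\cQ.
\]
This operator is block-lower-triangular with identity diagonal in $\cK\oplus\cK=\cM(z_0)\oplus\cQ$, hence invertible, analytic in $z$, equal to $I$ at $z_0$, and satisfies $G_z\cM(z_0)=\cM(z)$ by construction.

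The main technical obstacle is the real case when $k>0$ (i.e., $z_0$ is an eigenvalue of $\minop$ itself). Then $z_0\in\spec(H)$ for every self-adjoint extension $H$, so no ordinary resolvent is analytic at $z_0$, and one is forced to work with the reduced resolvent $R_H(z)=(H-z)^{-1}(I-P_0)$. The crux is verifying that the Krein formula lands in $\cN_z$ on the correctly identified subspace $\cN_{z_0}^{\mathrm{reg}}$ and that this subspace still maps isomorphically onto $\cM(z_0)$ under $\tr$; both hinge on the precise matching between $\ker(H-z_0)$, $\ker(\minop-z_0)$, and the jump $\dim\cN_{z_0}-n=k$ in the kernel of $\minop^*-z$ at $z_0$, which is ultimately controlled by the semi-Fredholm index formula.
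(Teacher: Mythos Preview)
Your approach is genuinely different from the paper's and is essentially correct when the defect numbers are finite, but it has gaps in the infinite-defect case that the statement explicitly allows.

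\textbf{Comparison of approaches.} The paper does not pick a self-adjoint extension at all. Instead it splits off the maximal self-adjoint part $\minop_{sa}$ of $\minop$ (the Krein decomposition $\minop=\minop_{sim}\oplus\minop_{sa}$), observes that $\cM(z)=\tr\ker(\minop_{sim}^*-z)$, and then works with the simple symmetric operator $\minop_{sim}$, for which $\ker(\minop_{sim}-z)=0$ identically. This makes $\minop_{sim}^*-z$ surjective on all of $\Phi_-(\minop^*)$, so one can build an analytic right inverse $B_z$, projectors $P_z=I-B_z(\minop_{sim}^*-z)$ onto $\cN_z$, and Kato--Daletskii--Krein transformation functions $F_z$ with $F_z\cN_{z_0}=\cN_z$; finally $G_z=\tr F_z\tr^R$ with a carefully chosen right inverse $\tr^R$. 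This bypasses both the reduced resolvent and all dimension counting, and works uniformly in finite and infinite $\dim\cK$.

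\textbf{The gap in infinite dimensions.} Your argument that $\cM(z_0)$ is Lagrangian for real $z_0$ uses ``isotropy plus $\dim\cM(z_0)=n$''. When $n=\infty$ this is not sufficient: one must show $\cM(z_0)^\omega\subseteq\cM(z_0)$ directly (as the paper does in its Proposition on $\cM(\bar z)=\cM(z)^\omega$, which is independent of the analyticity theorem). A second issue is that ``every Lagrangian plane admits a Lagrangian transversal'' does not automatically give $\cL\oplus\cM(z_0)=\cK\oplus\cK$ as a \emph{topological} direct sum in infinite dimensions; you need this to conclude that $\ran(H-z_0)$ is closed and hence that $z_0$ is an isolated eigenvalue of $H$, so that the reduced resolvent is analytic. (Choosing $\cL=J\cM(z_0)=\cM(z_0)^\perp$ fixes this.) Finally, your claim that $\tilde U_z\colon\cM(z_0)\to\cM(z)$ is an isomorphism, and hence that the graph of $A_z$ equals $\cM(z)$ rather than merely being contained in it, again relies on dimension matching; in infinite dimensions surjectivity needs a separate argument (e.g., running your Krein formula in the reverse direction from $\cN_z$ back to $\cN_{z_0}^{\mathrm{reg}}$). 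These points are all repairable, but as written the proof only covers the finite-defect case.
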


An equivalent formulation of the theorem is that $\cM(z)$ is an \emph{analytic Banach bundle} over $\Phi_-(\minop^*)$; see \cite{ZaiKreKucPan_umn75} for definitions. To compare this to previous results in the literature, we first recall that $\minop$ has the \emph{unique continuation property} (or, equivalently, has no \emph{inner solutions})
if $\ker (\minop^*-z)\cap \ker(\Gamma_0)\cap\ker(\Gamma_1)=0$ 
for all $z \in \C$. Since $\minop\subset \minop^*$, the identity
\begin{equation}\lb{c.8}
	\ker(\tr)=\ker(\Gamma_0)\cap\ker(\Gamma_1)=\dom(\minop),
\end{equation}
see, e.g., \cite[Lem.~14.6 (iv)]{Schmudgen_unboundedSAO}, 
 implies that this is equivalent to
	\begin{equation}\lb{c12}
	\ker(\minop-z)= 0.
	\end{equation}
Nontrivial elements of $\ker(\minop-z)$, if they exist, are called \emph{inner solutions}.

%
  In general $\ker(\minop^*-z)$ is not analytic or even continuous
  on $\Phi_-(\minop^*)$, since the dimension of $\ker(\minop^*-z)$ will
  jump at points $z$ which are eigenvalues of $\minop$.  
   For this reason, similar 
  results in the literature have assumed one of the following:
  \begin{enumerate}
  	\item $\minop$ has no inner solutions; see \cite[Thm.~3.8]{BooFur_tjm98} and \cite[\S6]{Fur_jgp04}.
	\item $z \in \rho(H_0)$, where $H_0$ is the Dirichlet-type extension of
  $\minop$; see \cite[Thm.~5.5.1]{BehHasDeS_boundarytriples} and \cite[Prop.~14.15]{Schmudgen_unboundedSAO}. 
  Note that this is stronger than assuming $\ker(\minop - z) = 0$.
  \end{enumerate} 
  \Cref{thm:analyticM_general}, on the other hand, requires no such assumptions on $\minop$ or $z$. 
The reason is that when passing to the Cauchy data $\cM(z)$, the jump in the dimension  disappears, leaving
  only the ``analytic component'' of $\ker(\minop^*-z)$.
  To make this
  intuition rigorous, we use the observation of M.~Krein
  \cite{Kre_umz49} (see also
  \cite[Rem.~2.3.10]{BehHasDeS_boundarytriples}) that one can split
  off a maximal self-adjoint part of $\minop$\,---\,which is responsible for
  the inner solutions\,---\,and hence consider only simple symmetric
  operators. 
  
  We recall from \cite[\S3.4]{BehHasDeS_boundarytriples}
that a closed, symmetric operator is \emph{simple} if $0$ is the largest reducing subspace 
on which it is self-adjoint. Simple symmetric operators have no 
eigenvalues, by \cite[Lem.~3.4.7]{BehHasDeS_boundarytriples}, and hence satisfy \eqref{c12} for all $z\in\C$.  Since $\minop$ is closed and symmetric, there is a 
splitting $\cH=\cH_{sim}\oplus \cH_{sa}$ with respect to which $\minop$ is diagonal and
  \begin{equation}
  \label{symdecomp}
  	\minop_{sim}:=\minop\big|_{\cH_{sim}}, \qquad \minop_{sa}:=\minop\big|_{\cH_{sa}}
  \end{equation}
  are simple symmetric and self-adjoint in $\cH_{sim}$ and $\cH_{sa}$, respectively; see \cite[\S3.4]{BehHasDeS_boundarytriples} for details.

\begin{proof}[Proof of \Cref{thm:analyticM_general}]
  \emph{Step one: reducing to the simple symmetric case.}
  Decomposing $\cH$ as in \eqref{symdecomp}, we have $\minop=\minop_{sim}\oplus \minop_{sa}$ and hence
  \begin{equation}
    \label{eq:decomposition_simple_symm}
    \minop^*-z=(\minop^*_{sim}-z)\oplus (\minop_{sa}-z)
  \end{equation}
  for all $z \in \C$. Since
  $\dom(\minop_{sa})\subset \dom(\minop) =\ker\tr$, we
  have $\tr \big(\!\ker(\minop_{sa}-z) \big) = 0$ and thus
  \begin{equation}
    \label{eq:cMsimple}
    \cM(z) = \tr \big(\!\ker(\minop^*-z) \big)
    = \tr \big(\!\ker(\minop^*_{sim}-z)\big).
  \end{equation}
  It therefore suffices to prove the result for $\minop_{sim}$, so we 
  will assume for the rest of the proof that $\minop$ is simple.
 
  \emph{Step two: $\minop^* - z$ is onto for $z\in\Phi_-(\minop^*)$.}
  By the definition of $\Phi_-$, the range of $\minop^*-z$ has finite
  codimension and hence is closed.  On the other hand, $\minop$ being
  simple implies $\ker(\minop-z)=0$, therefore $\Ran(\minop^*-z)$ is
  dense.
  
  \emph{Step three: $\ker(\minop^*-z)$ is analytic in $z$ when $\minop^*-z$ is onto.}
  Let $\cH_+:=\dom(\minop^*)$, equipped with the graph scalar product of
  $\minop^*$, so $\cH_+$ is a Hilbert space and
  $\minop^*\in\mathcal{B}(\cH_+, \cH)$. Since $\minop^*-z$ is surjective 
and its kernel (a closed subspace of a Hilbert space) is complemented, it
  has a bounded right inverse \cite[Thm.~2.12]{Brezis_FA}, i.e.,
  $B_z \in \cB(\cH,\cH_+)$ such that $(\minop^*-z) B_z = I_{\cH}$.  In a
  neighborhood of any $z_0\in \Phi_-(\minop^*)$, $B_z$ can be chosen to be
  analytic using the formula \[B_z := B_{z_0} \big( (\minop^*-z) B_{z_0}\big)^{-1} =
  B_{z_0} \big( I_{\cH} + ({z_0}-z) B_{z_0}\big)^{-1}.\]  
  Now
  \begin{equation}
    \label{eq:Pz_def}
    P_z \colon \cH_+ \to \cH_+,
    \qquad
    P_z := I_{\cH_+} - B_z (\minop^*-z),
  \end{equation}
  defines an analytic family of projectors (in
  general not orthogonal) onto $\cN_z = \ker(\minop^*-z)$.
  
  \emph{Step four: transformation functions (see
  \cite[\S II.4.2]{Kato_perturbation},
  \cite[\S IV.1.1]{DaletskiiKrein}) for $\cN_z$.}
  Fixing ${z_0}\in\Phi_-(\minop^*)$, define the operator family $F_z\colon \cH_+ \to \cH_+$ by
  \begin{equation}
    \label{eq:Fs_definition}
    F_z := (I - P_z)(I- P_{z_0}) + P_z P_{z_0} = I + (2 P_z - I)(P_{z_0}-P_z).
  \end{equation}
  The latter expression and $F_{z_0}=I$ show that $F_z$ is invertible for $z$ close
  to ${z_0}$; from the former expression we immediately get $F_zP_{z_0} = P_z
  F_z$ and $F_z^{-1}P_z = P_{z_0} F_z^{-1}$, therefore
  \begin{equation}
    \label{eq:Fs_property}
    F_z \cN_{z_0} = \cN_z.
  \end{equation}

  \emph{Step five: a right inverse for the boundary trace.} The trace
  operator $\tr \colon u\mapsto(\Gamma_0u,\Gamma_1u)$ is a bounded surjection
  from $\cH_+$ onto $\cK\oplus\cK$ with $\ker\tr=\dom(\minop)$, as in  
  \eqref{c.8}, so it has a bounded right inverse $\tr^R$, which can be
  chosen to satisfy
  \begin{equation}
    \label{eq:trR_condition}
    \tr^R\, \tr \big|_{\cN_{z_0}} = I_{\cN_{z_0}}.
  \end{equation}
  More explicitly, since $\cN_{z_0} \cap \ker\tr = 0$, the operator
  $\tr \colon \left(\dom(\minop) \oplus \cN_{z_0}\right)^\perp \oplus
  \cN_{z_0} \to \cK\oplus\cK$ is an isomorphism and $\tr^R$ is the
  corresponding inverse.
  
  \emph{Step six: transformation functions for $\cM(z)$.}
  Finally, the analytic family of operators
  \begin{equation}
    \label{eq:Gz_definition}
    G_{z} \colon \cK\oplus\cK \to\cK\oplus\cK,
    \qquad
    G_{z} := \tr F_z \tr^R
  \end{equation}
  satisfies
  \begin{equation}
    \label{eq:GMz_computation}
    G_{z} \cM({z_0}) = \tr F_z \tr^R \cM({z_0}) = \tr F_z \tr^R\, \tr \cN_{z_0}
    = \tr F_z \cN_{z_0} = \tr \cN_z = \cM(z)
  \end{equation}
  and is invertible for $z$ close to ${z_0}$ because $G_{{z_0}} = \tr \tr^R = I_{\cK\oplus\cK}$.
\end{proof}

\begin{remark}
  \label{rem:smooth_projector_family}
  From $G_z$
  one can define an analytic family of oblique projectors onto $\cM(z)$ by
  \[P_z=\begin{pmatrix}I&0\\G_z^{21}(G_z^{11})^{-1}&0\end{pmatrix},\quad
  \text{ where } \ G_z=\begin{pmatrix} G_z^{11}&G_z^{12}\\G_z^{21}&G_z^{22}\end{pmatrix} \]
  is the block decomposition of the operator $G_z$ in the direct sum decomposition $\cM(z_0)\oplus\cM(z_0)^\bot$. 
 Using \cite[Lem.~12.8]{BW93}, we see that the corresponding family of \emph{orthogonal} projections, 
$P_z P_z^* ( P_z P_z^* + (I-P_z^*)(I - P_z) )^{-1}$, 
is smooth. It is not analytic, however, since a family of orthogonal projections is analytic only if it is constant.
\hfill$\Diamond$\end{remark}

\newcommand{\uh}{\hat{u}}

The existence of the family $G_z$ allows us to define the crossing form
as in \eqref{Q;Gt} and thereby extend
the notion of monotonicity, \Cref{def:increasing_path}, to our present
setting of possibly infinite dimensional $\cK$.  In fact, the crossing form for
$\cM(z)$ has a beautiful explicit form.

\begin{corollary}
  \label{cor:Mdot}
  Fix $z_0 \in \Phi_-(\minop^*)$ and 
  let $G_z$ be the operator family from \Cref{thm:analyticM_general}.   
  Then, for any $v\in \cM(z_0)$,
  \begin{align}
  \begin{split}
    \form[v] := \frac{d}{dz} \omega(v, G_zv) \Big|_{z=z_0}
    &= \min\big\{
      \|g\|^2_\cH : (\minop^*-z_0)g=0,\ \tr g = v\big\}  \\
    \label{eq:form_Mdot}
    &= \|f\|_{\cH_{sim}}^2,
    \end{split}
  \end{align}
  where $\omega$ is the symplectic form \eqref{eq:sympl_form},
  $\cH_{sim}$ and $\minop_{sim}$ are defined in \eqref{symdecomp} and
  $f$ is the unique vector in $\cH_{sim}$ with
  $\minop_{sim}^*f = z_0 f$ and $\tr f = v$.
\end{corollary}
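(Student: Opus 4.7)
The plan is to establish the two equalities in \eqref{eq:form_Mdot} in sequence, starting with the minimum characterization (which is a purely Hilbert-space statement) and then using the minimizer $f$ to set up the analytic lift that computes the crossing form.

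For the minimum characterization, the key is the splitting $\cH = \cH_{sim} \oplus \cH_{sa}$ from Step~1 of the proof of \Cref{thm:analyticM_general}. Any $g \in \ker(\minop^* - z_0)$ decomposes as $g = f + h$ with $f \in \ker(\minop^*_{sim} - z_0) \subset \cH_{sim}$ and $h \in \ker(\minop_{sa} - z_0) \subset \dom(\minop_{sa}) \subset \ker \tr$, where the inclusion $\dom(\minop_{sa}) \subset \ker \tr$ follows from \eqref{c.8}. The constraint $\tr g = v$ therefore forces $\tr f = v$, and the Pythagorean identity $\|g\|^2_\cH = \|f\|^2_\cH + \|h\|^2_\cH$ shows that the minimum is attained precisely when $h = 0$. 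The minimizer $f$ is unique in $\cH_{sim}$ because simplicity of $\minop_{sim}$ gives $\ker(\minop^*_{sim} - z_0) \cap \ker \tr = \ker(\minop_{sim} - z_0) = 0$ via \eqref{c12}.

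For the first equality, the plan is to construct an explicit analytic lift of the curve $z \mapsto G_z v$ into $\ker(\minop^* - z)$ and apply Green's identity. Take $F_z$ to be the transformation function from Step~4 of the proof of \Cref{thm:analyticM_general}, built inside $\cH_{sim}$, so that $F_{z_0} = I$ and $F_z \ker(\minop^*_{sim} - z_0) = \ker(\minop^*_{sim} - z)$. Setting $g(z) := F_z f$, the identity $G_z = \tr F_z \tr^R$ together with the normalization $\tr^R \tr|_{\cN_{z_0}} = I_{\cN_{z_0}}$ from Step~5 gives $\tr^R v = f$ and hence $\tr g(z) = G_z v$. Applying \eqref{eq:Greens_identity} and using that the inner product is antilinear in its first argument,
\begin{equation*}
  \omega(v, G_z v)
  \ =\ \left<g(z_0), \minop^* g(z)\right>_\cH - \left<\minop^* g(z_0), g(z)\right>_\cH
  \ =\ (z - \overline{z_0}) \left<f, g(z)\right>_\cH.
\end{equation*}
For real $z_0$, which is the regime where $\cM(z_0)$ is genuinely Lagrangian and the crossing form agrees with the notion from \Cref{sec:monotonicity}, the prefactor $(z - \overline{z_0})$ vanishes at $z = z_0$, so differentiating collapses the expression to $\left<f, f\right>_\cH = \|f\|^2_{\cH_{sim}}$.

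The main technical hurdle is not the final computation but the bookkeeping required to justify $\tr g(z) = G_z v$ after the reduction to the simple component: one must check that the $F_z$ produced inside $\cH_{sim}$ is compatible with the $G_z$ that transports $\cM(z_0) = \tr(\cN_{z_0}^{sim})$ to $\cM(z)$, and that the right inverse $\tr^R$ can be chosen so as to send $v$ precisely to the minimizer $f$. Once that identification is in place, Green's identity together with the $\cH = \cH_{sim} \oplus \cH_{sa}$ split does essentially all of the remaining work.
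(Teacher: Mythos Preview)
Your proposal is correct and follows essentially the same route as the paper: reduce to the simple part via the $\cH_{sim}\oplus\cH_{sa}$ splitting to obtain the minimum characterization, then lift $v$ to $f$ via the normalized right inverse $\tr^R$, push forward by $F_z$, and apply Green's identity to evaluate $\omega(v,G_zv)$. Your observation that the computation yields $(z-\overline{z_0})\langle f,g(z)\rangle$ and hence collapses cleanly only for real $z_0$ is in fact slightly more careful than the paper's own write-up, which tacitly uses $z_0\in\bbR$ when rewriting $\langle f,\minop^* f_z\rangle - \langle \minop^* f, f_z\rangle$ as $\langle f,(\minop^*-z_0)f_z\rangle - \langle(\minop^*-z_0)f,f_z\rangle$.
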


\begin{remark}
  Equation \eqref{eq:form_Mdot} generalizes known formulas:
  for the derivative of the Dirichlet-to-Neumann map 
  in the resolvent set of the ``Dirichlet'' extension
  \cite[Prop.~14.15(iv)]{Schmudgen_unboundedSAO}; and for the crossing form when $\minop$ satisfies the unique continuation condition, for instance 
  \cite[Thm.~5.1]{BooFur_tjm98} or \cite[Thm.~5.10]{LS1}.  Under such conditions, the
  solution $f$ to $\minop^* f = z_0 f$, $\tr f = v$ is unique and
  the operator that maps the first component of the vector $v\in\cM(z)\subset\cK\oplus\cK$ into $f$ is known as the \term{$\gamma$-field}.
\hfill$\Diamond$\end{remark}

\begin{proof}[Proof of \Cref{cor:Mdot}] 
  In view of the decomposition~\eqref{eq:decomposition_simple_symm} and
  its properties, the general solution of $\minop^* g = z_0 g$, $\tr g =
  v$ has the form
  \begin{equation}
    \label{eq:gamma_field_gen}
    g = f + \ker(\minop_{sa}-z_0),
  \end{equation}
  with $\|g\|^2 \geq \|f\|^2$, where $f\in\ker(\minop^*_{sim}-z_0)\subset\cH_{sim}$ is 
  unique by \eqref{c12}. Therefore, we can restrict ourselves to the
  case when $\minop$ is simple symmetric. In this case, 
  there exists a unique $f \in \ker(\minop^*-z_0)$ with $v = \tr f$.
  From \eqref{eq:Fs_property}, \eqref{eq:trR_condition} and
  \eqref{eq:Gz_definition} we have
  \begin{equation}
    \label{eq:Gzv}
    G_zv = \tr F_z \tr^R v = \tr F_z \tr^R \tr f = \tr F_z f
    = \tr f_z,
  \end{equation}
  where $f_z := F_z f \in \ker(\minop^*-z)$.  By Green's identity~\eqref{eq:Greens_identity}, we get
  \begin{align}
    \nonumber
    \omega(v, G_zv) = \omega(\tr f, \tr f_z)
    &= \left<f, (\minop^*-z_0) f_z\right>_\cH - \left<(\minop^*-z_0)f, f_z\right>_\cH \\
    \label{eq:Greens_form}
    &= \left<f, (z-z_0) f_z \right>_\cH = \left<f, (z-z_0) F_z f \right>_\cH.
  \end{align}
  Equation~\eqref{eq:form_Mdot} follows from $F_{z_0}=I$ and the continuity of $F_z$.
\end{proof}

We now state some further properties of the Cauchy data space, 
recalling $\cF=\tr\big(\!\dom (H_F) \big)$ for the Friedrichs extension $H_F$ of $S$.

\begin{proposition}
  \label{prop:cM_real_s} Under the assumptions in \Cref{thm:analyticM_general}
  the Cauchy data space $\cM(\cdot)$ has the following properties:
  \begin{enumerate}
  \item \label{item:complement} For all $z\in \Phi_-(\minop^*)$,
    \begin{equation}
      \label{eq:sympl_complement}
      \cM(\zbar) = \cM(z)^\omega
      \ :=\  \big\{u \in \cK \oplus \cK \colon \omega(u, v) = 0\
        \text{for all } v \in \cM(z) \big\}.
    \end{equation}
    In particular, $\cM(s)$ is Lagrangian and increasing for
    $s\in \Phi_-(\minop^*) \cap \R$.
  \item \label{item:inf_limit} If $\minop$ is bounded from below, with lower bound $\gamma$,
     then $\cM(s)$ has limits (over real $s$)
    \begin{equation}
      \label{eq:Mlimit}
      \lim_{s\to-\infty} \cM(s) = \cF, \qquad 
      \lim_{s\to \gamma-} \cM(s) =: \cM(\gamma-),
    \end{equation}
    in the strong graph sense, and the limiting subspace $\cM(\gamma-)$
    is Lagrangian.
  \end{enumerate}
\end{proposition}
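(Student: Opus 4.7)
The plan is to deduce the symplectic complement identity in part (1) from Green's identity combined with the closed-range property provided by $z \in \Phi_-(\minop^*)$, and to obtain the Lagrangian and monotonicity statements as direct corollaries. Part (2) will then follow from monotonicity, with the identification of the $-\infty$ limit as $\cF$ requiring a separate Weyl-function argument.

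For the inclusion $\cM(\zbar) \subseteq \cM(z)^\omega$, I would just invoke Green's identity~\eqref{eq:Greens_identity}: for $f \in \ker(\minop^* - \zbar)$ and $g \in \ker(\minop^* - z)$,
\[
\omega(\tr f, \tr g) = \langle f, \minop^* g\rangle_\cH - \langle \minop^* f, g\rangle_\cH = z\langle f, g\rangle_\cH - z\langle f, g\rangle_\cH = 0.
\]
The reverse inclusion is the delicate step. Given $u \in \cM(z)^\omega$, I would lift it to some $h \in \dom(\minop^*)$ with $\tr h = u$ by surjectivity of $\tr$; a second application of Green's identity then shows $(\minop^* - \zbar)h$ is orthogonal to $\ker(\minop^* - z) = \Ran(\minop - \zbar)^\perp$. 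Since $z \in \Phi_-(\minop^*)$ makes $\Ran(\minop^* - z)$ closed, the closed range theorem yields closedness of $\Ran(\minop - \zbar)$, so one can select $\phi \in \dom(\minop) \subseteq \ker \tr$ with $(\minop - \zbar)\phi = (\minop^* - \zbar)h$. Then $h - \phi \in \ker(\minop^* - \zbar)$ has boundary trace $u$, giving $u \in \cM(\zbar)$. Specializing to $z = s \in \R$ yields $\cM(s) = \cM(s)^\omega$, hence Lagrangian. Monotonicity in the sense of \Cref{def:increasing_path} is immediate from \Cref{cor:Mdot}: any nonzero $v \in \cM(s)$ can be written as $\tr g_{sim}$ with $g_{sim} \in \cH_{sim}$ nonzero (since the self-adjoint piece $g_{sa}$ lies in $\dom(\minop_{sa}) \subseteq \dom(\minop) = \ker\tr$), hence $\form[v] = \|g_{sim}\|_{\cH_{sim}}^2 > 0$.

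For part (2), existence of the one-sided limits at $-\infty$ and $\gamma-$ is a consequence of monotonicity: in finite-dimensional $\cK$ this follows from compactness of $\Lagr$, and in general from monotone convergence of the associated family of self-adjoint linear relations (cf.\ \cite[\S 5.2]{BehHasDeS_boundarytriples}). To identify $\lim_{s\to-\infty}\cM(s) = \cF$, I would reduce to the case $\cF = \cV = 0 \oplus \cK$ via a symplectic transformation, so that $\cM(s)$ becomes the graph of the Weyl function $M(s)$. The Friedrichs extension being bounded below by $\gamma$ implies $M(s)$ is an operator Herglotz function whose values tend to $-\infty$ as quadratic forms as $s \to -\infty$; geometrically, its graph approaches $\cV$. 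The subspace $\cM(\gamma-)$ is then Lagrangian as a monotone strong-graph limit of Lagrangian planes: isotropy passes to limits by continuity of $\omega$, while maximality follows from the monotone behavior of the underlying quadratic forms.

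The main obstacle I anticipate is identifying the $-\infty$ limit as $\cF$ in the general case $\cF \neq \cV$, which requires classical asymptotics of the Weyl function associated with the Friedrichs extension (cf.\ \cite[\S 14]{Schmudgen_unboundedSAO}) and careful handling of the strong-graph topology when $\cK$ is infinite-dimensional. The arguments for part (1) and the bare existence of the limits, by contrast, are essentially formal consequences of Green's identity and the monotonicity established in part (1).
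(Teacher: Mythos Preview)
Your proof of part (1) is essentially identical to the paper's: the same use of Green's identity for the easy inclusion, the same lift-and-correct argument via closedness of $\Ran(\minop-\zbar)$ for the reverse inclusion, and the same appeal to \Cref{cor:Mdot} for monotonicity.

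For part (2) the strategies are close in spirit but differ in organization. You propose reducing to $\cF=\cV$ by a symplectic transformation and then invoking Weyl-function asymptotics to identify the $-\infty$ limit; the paper instead cites \cite[Thm.~5.5.1]{BehHasDeS_boundarytriples} directly for $\cM(-\infty)=\cF$ in the original triplet, and reserves the change-of-triplet trick (via \cite[Cor.~5.5.5]{BehHasDeS_boundarytriples}) for the $\gamma-$ limit, where it is genuinely needed because the general-triplet statement of \cite[Cor.~5.2.14]{BehHasDeS_boundarytriples} requires $\dom(H_F)=\ker\Gamma_0'$. Your argument that $\cM(\gamma-)$ is Lagrangian (``isotropy passes to limits, maximality from monotone forms'') is morally right but would need the same care in infinite-dimensional $\cK$; the paper sidesteps this by noting that strong resolvent limits of self-adjoint relations are self-adjoint and that strong resolvent and strong graph convergence coincide for Lagrangian planes \cite[Cor.~1.9.6]{BehHasDeS_boundarytriples}. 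Both routes work; the paper's is a bit more economical by leaning harder on the cited monograph.
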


We recall from \cite[Def.~1.9.1]{BehHasDeS_boundarytriples} that the strong
graph limit of $\cM(s)$ consists of all $u \in \cK\oplus\cK$ for which
there exists a sequence $u_s \in \cM(s)$ with $u_s \to u$.

\begin{remark}
  \label{rem:cM0}
  In general $\cM(\gamma-)\neq \cM(\gamma)$. For the example of $S = -d^2/dx^2$ 
  on the half-line (with the standard Dirichlet and Neumann traces), we have 
  $\gamma=0\in\spess(S)=\C\setminus \Phi_-(\minop^*)$ and
  $\cM(0)$, when computed from the definition \eqref{eq:CDS_recall},
  is equal to the zero subspace of $\C\oplus\C$ (and, in particular,
is not Lagrangian).  On the other hand, the
  limit $\cM(0-) $ is the Lagrangian plane $\C \oplus 0$. 
  \hfill$\Diamond$
\end{remark}

\begin{proof}[Proof of \Cref{prop:cM_real_s}]
  \eqref{item:complement}\,
  It follows from Green's identity~\eqref{eq:Greens_identity} that $\omega(u,v)=0$ 
  for all $u\in\cM(\zbar)$ and $v\in\cM(z)$,
  therefore $ \cM(\zbar) \subset \cM(z)^\omega$. To prove the other inclusion, 
  suppose $u\in \cM(z)^\omega$, so $\omega( u, \tr f) = 0$ for all $f\in\ker(\minop^*-z)$.
  %
  Since $\tr$ is surjective, there exists $g\in\dom(\minop^*)$ such that
  $\tr g= u$. Using Green's identity and $(\minop^*-z)f=0$, we get
  \begin{align}
  \begin{split}
    0 
    = \omega\big(\tr g, \tr f\big)
    &= \langle \minop^* g, f\rangle_{\cH}
    -\langle g, \minop^* f\rangle_{\cH} \\
    &= \big\langle (\minop^*-\zbar)g, f\big\rangle_{\cH}
        -\big\langle g, (\minop^*-z)f\big\rangle_{\cH}
    = \big\langle (\minop^*-\zbar)g, f\big\rangle_{\cH}.
  \end{split}
  \end{align}
This means
  $(\minop^*-\zbar)g\in \ker(\minop^*-z)^\bot=\Ran(\minop-\zbar)$, where the
  last equality holds 
  because $z\in\Phi_-(\minop^*)$ 
  implies $\Ran(\minop-\zbar)$ is closed, thus 
  $(\minop^*-\zbar)g=(\minop-\zbar)h$ for some  $h\in\dom(\minop)=\ker\tr$.
  Since $S^*$ is an extension of $S$, this implies
  $g-h\in\ker(\minop^*-\zbar)$ and so $u=\tr g=\tr (g-h) \in\cM(\zbar)$,
  as required.

When $s$ is real \eqref{eq:sympl_complement} gives $\cM(s)^\omega = \cM(s)$, so $\cM(s)$ is
  Lagrangian.  It is increasing because the crossing form $\form[v]$
  in \eqref{eq:form_Mdot} is positive definite ($f\neq 0$ for nonzero
  $v$ in \Cref{cor:Mdot}).
  

  \eqref{item:inf_limit}\, Because the lower
  bounds of $\minop$ and its Friedrichs extension $H_F$ coincide,
 we have  $(-\infty, \gamma) \subset \rho (H_F) \subset \Phi_-(\minop^*)$ by
  \eqref{eq:Phi_minus_ess}, therefore $\cM(s)$ is continuous on 
  $(-\infty,\gamma)$ by \Cref{thm:analyticM_general}. From \cite[Cor.~5.2.14]{BehHasDeS_boundarytriples} we have that the
  $s \downarrow -\infty$ limit of $\cM(s)$ exists in the strong
  resolvent sense, and \cite[Thm.~5.5.1]{BehHasDeS_boundarytriples} gives
  $\cM(-\infty) = \cF$.  
  
  For the limit $s \uparrow \gamma$, we first use 
   \cite[Cor.~5.5.5]{BehHasDeS_boundarytriples} to find a boundary triplet 
   $(\cK', \Gamma_0', \Gamma_1')$ such that $\dom(H_F) = \ker \Gamma_0'$.
   It then follows from \cite[Cor.~5.2.14]{BehHasDeS_boundarytriples} that the 
   corresponding Cauchy data space $\cM'(z)$ has a left-hand limit 
   $\cM'(\gamma-)$ in the strong resolvent sense, and this limit is Lagrangian. 
  By
  \cite[Thm.~2.5.1]{BehHasDeS_boundarytriples}, the triplets $(\cK, \Gamma_0, \Gamma_1)$ 
  and $(\cK', \Gamma_0', \Gamma_1')$ are
  related by a bounded symplectic transformation, thus the Cauchy data
  spaces $\cM(z)$ and $\cM'(z)$ are related by a M\"obius
  transform \cite[Eq.~(2.5.4)]{BehHasDeS_boundarytriples}, which
  preserves convergence and the Lagrangian property.
  To complete the proof we note that for Lagrangian subspaces, strong resolvent
  convergence is equivalent to strong graph convergence, by
  \cite[Cor.~1.9.6]{BehHasDeS_boundarytriples}.
%
\end{proof}

\section{First proof of main theorems}
\label{sec:proof1}

We are now ready to prove our main results, namely \Cref{thm:main,thm:shift_Hormander}, 
using the Maslov index. 
We are thus back to the assumption that $\cK$ is finite dimensional. There are three key ingredients in the proof, two of which have already been established:

\begin{enumerate}
	\item A formula for the difference of counting functions in terms of the difference of Maslov indices (\Cref{lem:Maslov_counting});
	\item The identity of Zhou--Wu--Zhu relating the difference of Maslov indices to the 
	difference of Duistermaat indices (formula \eqref{eq:Hor_ZWZ0});
	\item A one-sided continuity result for the Duistermaat index (\Cref{prop:continuous}).
\end{enumerate}

Working towards the counting formula in \Cref{lem:Maslov_counting}, we first relate the eigenvalues 
of a self-adjoint extension to the 
intersections of the corresponding Lagrangian plane with the Cauchy data space
 $\cM(\cdot)$ defined in \eqref{eq:CDS_def}.

\begin{lemma}
  \label{lem:intersection_dim} 
  Under the assumptions of \Cref{thm:main}, let $H_1$ and $H_2$ be self-adjoint
  extensions of $\minop$ corresponding to Lagrangian planes $\cL_1$ and $\cL_2$. If $z \in \bbC \backslash \spess(\minop)$, then
    \begin{equation}
      \label{eq:intersection_ineq}
      \dim \ker(H_j - z) = \dim\!\big(\cM(z) \cap \cL_j\big) + \dim \ker (S-z)
    \end{equation}
    for $j=1,2$, therefore
    \begin{equation}
      \label{eq:intersection_dim}
      \dim \ker(H_1 - z) - \dim \ker(H_2 - z)
      = \dim\!\big(\cM(z) \cap \cL_1\big) - \dim\!\big(\cM(z) \cap \cL_2\big).
    \end{equation}
\end{lemma}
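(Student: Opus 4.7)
The plan is to identify the kernel of $H_j - z$ directly using the description of the extension's domain, then apply rank-nullity to the trace operator restricted to $\ker(\minop^* - z)$.

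First, I would note that by \eqref{eq:domain_extension}, a vector $f$ lies in $\ker(H_j - z)$ if and only if $f \in \dom(\minop^*)$, $(\minop^* - z) f = 0$, and $\tr f \in \cL_j$. Hence
\[
\ker(H_j - z) = \bigl\{ f \in \ker(\minop^* - z) : \tr f \in \cL_j \bigr\}.
\]
Consider the restriction
\[
\tr \big|_{\ker(\minop^* - z)} \colon \ker(\minop^* - z) \to \cK \oplus \cK.
\]
Its image is exactly $\cM(z)$ by the definition \eqref{eq:CDS_def}, and by \eqref{c.8} its kernel equals $\ker(\minop^* - z) \cap \dom(\minop)$, which coincides with $\ker(\minop - z)$ since $\minop \subset \minop^*$.

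Restricting further to the preimage of $\cL_j$, the map
\[
\tr \colon \bigl\{ f \in \ker(\minop^* - z) : \tr f \in \cL_j \bigr\} \longrightarrow \cM(z) \cap \cL_j
\]
is surjective (because any element of $\cM(z) \cap \cL_j$ is $\tr f$ for some $f \in \ker(\minop^* - z)$, which automatically lies in the domain on the left) with kernel $\ker(\minop - z)$. The hypothesis $z \in \bbC \setminus \spess(\minop)$ ensures that $H_j - z$ is Fredholm, so $\ker(H_j - z)$ is finite dimensional; in particular all three dimensions appearing below are finite. Applying rank-nullity yields
\[
\dim \ker(H_j - z) = \dim\bigl(\cM(z) \cap \cL_j\bigr) + \dim \ker(\minop - z),
\]
which is \eqref{eq:intersection_ineq}. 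Subtracting the $j=2$ identity from the $j=1$ identity eliminates the $\dim \ker(\minop - z)$ term and gives \eqref{eq:intersection_dim}.

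I do not foresee any substantive obstacle: the argument is essentially a bookkeeping step using the boundary triplet machinery already set up in \eqref{eq:domain_extension}, \eqref{eq:CDS_def}, and \eqref{c.8}. The only point that requires even a moment of care is the justification that all the dimensions are finite, which follows from the Fredholm property ensured by $z \notin \spess(\minop)$.
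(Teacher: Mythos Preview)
Your proof is correct and essentially identical to the paper's: both identify $\ker(H_j-z)$ as the preimage of $\cL_j$ under $\tr$ restricted to $\ker(\minop^*-z)$, then apply rank-nullity, using $\ker(\tr)=\dom(\minop)$ to identify the kernel as $\ker(\minop-z)$. Your extra remark on finiteness of dimensions is a welcome clarification.
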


\begin{remark}
  \label{rem:intersectionUCP}
  If $\minop$ has the unique continuation property, so that
  \eqref{c12} holds for all $z \in \C$, then
  \eqref{eq:intersection_ineq} implies
  $\dim \ker(H_j - z) = \dim(\cM(z) \cap \cL_j)$ for all
  $z \in \bbC \backslash \spess(\minop)$. This is no longer true if
  $\minop$ does not have the unique continuation property (see
  \Cref{sec:noUCP} for an elementary example), but
  \eqref{eq:intersection_dim} holds regardless.
  \hfill$\Diamond$
\end{remark}

\begin{proof}
  From the definition of the extension $H_j$ using the Lagrangian
  plane $\cL_j$ in \eqref{eq:domain_extension}, and the definition of
  $\cM(z)$ in \eqref{eq:CDS_recall}, we have
  \begin{equation}
    \label{eq:TrE}
    \tr \big(\! \ker(H_j - z) \big) = \cM(z) \cap \cL_j.
  \end{equation}
  By the rank-nullity theorem,
  \begin{equation}
    \label{eq:mismatch_dimension}
    \dim \ker(H_j - z)
    = \dim\!\big(\cM(z) \cap \cL_j\big)
    + \dim \ker\left(\tr \big|_{\ker(H_j - z)}\right).
  \end{equation}
  In turn,
  \begin{equation}
    \ker\left(\tr \big|_{\ker(H_j - z)}\right)
    = \ker(H_j - z) \cap \ker(\tr)
    = \ker(H_j - z) \cap \dom(S)
    = \ker(S-z).
  \end{equation}
  Substituting this into \eqref{eq:mismatch_dimension} gives
  \eqref{eq:intersection_ineq}.
\end{proof}

We now relate the eigenvalue counting functions and Maslov indices, recalling 
the definition of $N(H;I)$ 
from \eqref{eq:counting_def}.

\begin{proposition}
  \label{lem:Maslov_counting}
With the hypotheses and notation of \Cref{thm:main},
 for any interval $[a,b] \subset \bbR \backslash \spess(\minop)$ we have
  \begin{equation}
    \label{eq:Maslov_counting}
    N\big(H_1; (a,b] \big) - N\big(H_2; (a,b] \big)
    = \Mas_{[a, b]}\!\big(\cL_2, \cM(\cdot)\big)
    - \Mas_{[a, b]}\!\big(\cL_1, \cM(\cdot) \big).
  \end{equation}
\end{proposition}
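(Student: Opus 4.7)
The plan is to combine the monotone-path formula for the Maslov index, which is available via \eqref{eq:Maslov_intersections2}, with the intersection-dimension identity \eqref{eq:intersection_dim} from Lemma~\ref{lem:intersection_dim}. The setup is favourable because on $[a,b]\subset\bbR\setminus\spess(\minop)\subset \Phi_-(\minop^*)$ (using the inclusion \eqref{eq:Phi_minus_ess}), the Cauchy data space $\cM(\cdot)$ is an analytic path of Lagrangian planes by \Cref{thm:analyticM_general} and \Cref{prop:cM_real_s}\eqref{item:complement}, and it is strictly increasing since the crossing form given by \Cref{cor:Mdot} (equivalently, the form in \eqref{eq:form_Mdot}) is positive definite. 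In particular, crossings of $\cM(\cdot)$ with any fixed Lagrangian plane are isolated and, by compactness of $[a,b]$, there are finitely many of them.

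First, I would apply \eqref{eq:Maslov_intersections2} to each of $\cL_1$ and $\cL_2$ to obtain
\begin{equation}
\Mas_{[a,b]}\!\big(\cL_j,\cM(\cdot)\big) = -\sum_{t\in (a,b]} \dim\!\big(\cM(t)\cap \cL_j\big), \qquad j=1,2.
\end{equation}
Subtracting the $j=1$ equation from the $j=2$ equation gives
\begin{equation}
\Mas_{[a,b]}\!\big(\cL_2,\cM(\cdot)\big) - \Mas_{[a,b]}\!\big(\cL_1,\cM(\cdot)\big)
= \sum_{t\in (a,b]} \Big[\dim\!\big(\cM(t)\cap \cL_1\big) - \dim\!\big(\cM(t)\cap \cL_2\big)\Big].
\end{equation}
Then I would invoke the key cancellation identity \eqref{eq:intersection_dim} pointwise in $t\in(a,b]$ to rewrite the summand as $\dim\ker(H_1-t)-\dim\ker(H_2-t)$, yielding
\begin{equation}
\Mas_{[a,b]}\!\big(\cL_2,\cM(\cdot)\big) - \Mas_{[a,b]}\!\big(\cL_1,\cM(\cdot)\big)
= \sum_{t\in (a,b]} \big[\dim\ker(H_1-t) - \dim\ker(H_2-t)\big],
\end{equation}
which is precisely $N(H_1;(a,b]) - N(H_2;(a,b])$ by the definition \eqref{eq:counting_def}.

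I do not expect a substantive obstacle: the proposition is essentially an assembly of three ingredients already in place (monotonicity of $\cM$, the intersection formula for increasing paths, and Lemma~\ref{lem:intersection_dim}). The one conceptual point worth emphasising, and the reason the result holds \emph{without} a unique continuation assumption, is this. As discussed in \Cref{rem:intersectionUCP}, the individual quantity $\Mas_{[a,b]}(\cL_j,\cM(\cdot))$ does not in general equal the eigenvalue count $-N(H_j;(a,b])$; the discrepancy at each $t$ is $\dim\ker(\minop-t)$, the dimension of the space of inner solutions. However, these inner solutions are intrinsic to $\minop$ and are therefore identical for every self-adjoint extension, so they cancel when one takes the difference between two extensions $H_1$ and $H_2$. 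The algebraic expression of this cancellation is exactly \eqref{eq:intersection_dim}, which is why the formula \eqref{eq:Maslov_counting} requires no hypothesis on $\minop$ beyond finite equal defect numbers.
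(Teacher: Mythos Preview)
Your proof is correct and follows essentially the same approach as the paper: establish that $\cM(\cdot)$ is an increasing path of Lagrangian planes via \Cref{prop:cM_real_s}\eqref{item:complement} and \Cref{cor:Mdot}, apply \eqref{eq:Maslov_intersections2} to each $\cL_j$, subtract, and use \eqref{eq:intersection_dim} to convert intersection dimensions into eigenspace dimensions. Your additional commentary on why the unique continuation hypothesis is unnecessary is accurate and matches the paper's own remarks.
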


As mentioned in the introduction, it is only possible to express the counting functions $N\big(H_j; (a,b] \big)$ in terms of Maslov indices if one assumes the unique continuation property of $\minop$, but formula \eqref{eq:Maslov_counting} for their \emph{difference} does not require this assumption; see \Cref{rem:intersectionUCP}.

\begin{proof}
It follows from \Cref{prop:cM_real_s}\eqref{item:complement} and \Cref{cor:Mdot} that $\cM(s)$ is an increasing 
path of Lagrangian planes, so we can use \eqref{eq:Maslov_intersections2}, \eqref{eq:intersection_dim} and 
\eqref{eq:counting_def} to obtain
\begin{align*}
    \Mas_{[a,b]}\!\big(\cL_2, \cM(\cdot)\big) - \Mas_{[a,b]}\!\big(\cL_1, \cM(\cdot) \big)
    &= \sum_{t \in (a,b]} \big[ \dim\!\big(\cM(t)\cap\cL_1 \big) - \dim\!\big(\cM(t)\cap\cL_2 \big) \big] \\
    &= \sum_{\lambda \in (a,b]}  \big[ \dim \ker(H_1 - \lambda) - \dim \ker(H_2 - \lambda) \big] \\
	&= N\big(H_1; (a,b] \big) - N\big(H_2; (a,b] \big)
  \end{align*}
as claimed.
\end{proof}

We are now ready to prove our main results.

\begin{proof}[Proof of \Cref{thm:shift_Hormander}]
For $[a,b] \subset \bbR \backslash \spess(\minop)$ we combine
 \eqref{eq:Maslov_counting} and the
  Zhou--Wu--Zhu identity \eqref{eq:Hor_ZWZ0} to obtain
  \begin{equation}
    \label{eq:Hormander_counting}
    N\big(H_1; (a,b] \big) - N\big(H_2; (a,b] \big)
    = \iD\!\big(\cL_1, \cL_2, \cM(b)\big) - \iD\!\big(\cL_1, \cL_2, \cM(a)\big),
  \end{equation}
which is exactly \eqref{eq:count_triple}.

 For $\lambda$ below the essential spectrum we use \eqref{eq:Hormander_counting} 
 with $b = \lambda$ and $a < b$. \Cref{cor:Mdot} and item \eqref{item:inf_limit} of \Cref{prop:cM_real_s} 
say that $\cM(a)$ converges to $\cF$ from above as $a \to -\infty$, so we can use 
\eqref{eq:limit3} from \Cref{prop:continuous} to compute
\begin{equation}
\label{iotalimit}
	\lim_{a \to -\infty} \iD\!\big(\cL_1, \cL_2, \cM(a)\big) = \iD(\cL_1, \cL_2, \cF )
\end{equation}
and hence arrive at \eqref{eq:shift_triple}.
\end{proof}

\begin{proof}[Proof of \Cref{thm:main}]
We start with \eqref{eq:shift_triple} from \Cref{thm:shift_Hormander},
  \begin{equation}
    \label{eq:shift_triple_again}
    \sigma(H_1,H_2; \lambda)
    = \iD\!\big(\cL_1, \cL_2, \cM(\lambda)\big) - \iD(\cL_1, \cL_2, \cF),
  \end{equation}
  and use the bound \eqref{eq:Dui_estimate} to
  estimate $\iD\!\big(\cL_1, \cL_2, \cM(\lambda)\big)$, arriving at
  \begin{equation}
    \label{eq:shift_bounds}
    - \iD(\cL_1,\cL_2,\cF) 
    \leq \sigma(H_1,H_2; \lambda)
    \leq n - \dim \cL_1 \cap \cL_2 - \iD(\cL_1, \cL_2, \cF).
  \end{equation}
  We then use \eqref{eq:iDswap1} to simplify the right-hand side to $\iD(\cL_2,\cL_1,\cF)$.
\end{proof}

\section{The Duistermaat index of a self-adjoint linear relation}
\label{sec:proof_krein_resolvent}

A Lagrangian plane in 
$(\cK\oplus\cK, \omega)$ can be viewed as a self-adjoint linear
relation; see \cite[Sec.~4.2]{BooZhu_memAMS}.  In this context,
the difference of two Lagrangian planes, $\cL$ and $\cM$, is the
Lagrangian plane
\begin{equation}
	\label{eq:difference_def}
	\cL - \cM := \left\{
	\begin{pmatrix}
		u \\ v_{_\cL} - v_{_\cM}   
	\end{pmatrix}
	\in \cK \oplus \cK :
		\begin{pmatrix}
		u \\ v_{_\cL}   
	\end{pmatrix}
	\in \cL,\
	\begin{pmatrix}
		u \\ v_{_\cM}  
	\end{pmatrix}
	\in \cM \right\},
\end{equation}
the \emph{inverse} of $\cL$ is
\begin{equation}
	\label{eq:inverse_def}
	\cL^{-1} := \left\{
	\begin{pmatrix}
		v \\ u
	\end{pmatrix}
	\in \cK\oplus\cK :
	\begin{pmatrix}
		u \\ v
	\end{pmatrix}
	\in \cL \right\},
\end{equation}
and the \emph{kernel} and \emph{multivalued part} of 
$\cL$ are
\begin{equation}
	\label{eq:ker_def}
	\ker \cL := \left\{u \in \cK :
	\begin{pmatrix}
		u \\ 0
	\end{pmatrix}
	\in \cL \right\},
	\qquad
	\mul \cL := \left\{v \in \cK :
	\begin{pmatrix}
		0 \\ v
	\end{pmatrix}
	\in \cL \right\}.
\end{equation}
The dimension of $\ker \cL$ is called the \emph{nullity} and
denoted $n_0(\cL)$.  If $\mul{\cL} = 0$, then $\cL$ is the
graph of a self-adjoint linear operator on $\cK$. In this case we 
define the \emph{index} $n_-(\cL)$
to be
the number of negative eigenvalues of the corresponding operator.

\begin{proposition}
	\label{prop:diff_of_Krein_terms}
	Assume $\cL_1$, $\cL_2$ and $\cL_3$ are Lagrangian planes such that $\cL_3$ is transversal
	to $\cL_1$, $\cL_2$ and $\cV = 0\oplus\cK$.  Then the Lagrangian
	plane
	\begin{equation}
		\label{eq:diff_of_Krein_terms}
		\Delta := (\cL_1 - \cL_3)^{-1} - (\cL_2 - \cL_3)^{-1}
	\end{equation}
	is a graph of an operator on $\cK$, with nullity
	and index
	\begin{equation}
		\label{eq:index_Delta}
		n_0(\Delta) = \dim \cL_1 \cap \cL_2,
		\qquad
		n_-(\Delta) = \iD(\cL_1,\cL_2,\cL_3).
	\end{equation}
\end{proposition}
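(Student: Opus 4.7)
The plan is to realize $\Delta$ concretely as the graph of a Hermitian operator, compute the nullity directly from that realization, and obtain the negative index by a symplectic reduction to the vertical reference plane, where \eqref{eq:Dui_index_L3vertical} of \Cref{prop:Dui_index} becomes applicable.

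\emph{Setup and nullity.} Since $\cL_3 \cap \cV = 0$, I can write $\cL_3 = \{(u, R_3 u) : u \in \cK\}$ for a unique Hermitian $R_3$. Given frames $(X_j, Y_j)$ of $\cL_j$, set $A_j := Y_j - R_3 X_j$. Any $\kappa$ with $A_j \kappa = 0$ produces $(X_j \kappa, Y_j \kappa) \in \cL_j \cap \cL_3 = 0$, forcing $\kappa = 0$ since $(X_j, Y_j)$ is a frame, so $A_j$ is invertible. Unwinding \eqref{eq:difference_def}--\eqref{eq:inverse_def} yields
\[
  (\cL_j - \cL_3)^{-1} = \{(A_j \kappa, X_j \kappa) : \kappa \in \cK\} = \mathrm{graph}(T_j), \qquad T_j := X_j A_j^{-1},
\]
and the Lagrangian identity $X_j^* Y_j = Y_j^* X_j$ implies $T_j^* = T_j$ by direct calculation. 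Hence $\Delta = \mathrm{graph}(T_1 - T_2)$ is the graph of a Hermitian operator, therefore Lagrangian and transversal to $\cV$. The relation $T_j v = u \iff (u, v + R_3 u) \in \cL_j$ provides a linear bijection $v \leftrightarrow (u, v + R_3 u)$ between $\ker(T_1 - T_2)$ and $\cL_1 \cap \cL_2$, so that $n_0(\Delta) = \dim \cL_1 \cap \cL_2$.

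\emph{Negative index via symplectic reduction.} To compute $n_-(\Delta) = n_-(T_1 - T_2)$, I would apply the symplectic shear $h : (u,v) \mapsto (u, v - R_3 u)$, which carries $\cL_3$ to the horizontal plane $\cK \oplus 0$, followed by the symplectic rotation $\tilde J : (u,v) \mapsto (v, -u)$, which carries $\cK \oplus 0$ to $\cV$. A direct computation shows that $\tilde J h \cL_j$ admits the frame $(A_j, -X_j)$. By symplectic invariance of $\iD$ and \eqref{eq:Dui_index_L3vertical} applied to this triple,
\[
  \iD(\cL_1, \cL_2, \cL_3) = \iD\big(\tilde J h \cL_1, \tilde J h \cL_2, \cV\big) = n_-\big(R_2^\epsilon - R_1^\epsilon\big)
\]
for every sufficiently small $\epsilon > 0$, where $R_j^\epsilon := -X_j(A_j - \epsilon X_j)^{-1}$ is the $\epsilon$-Robin map of $\tilde J h \cL_j$. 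As $\epsilon \to 0^+$, $R_j^\epsilon \to -T_j$, so $R_2^\epsilon - R_1^\epsilon \to T_1 - T_2$.

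\emph{Closing the limit.} The main obstacle is passing from this identity, valid for small $\epsilon > 0$, to the target $n_-(T_1 - T_2) = \iD(\cL_1, \cL_2, \cL_3)$ at $\epsilon = 0$, since zero eigenvalues of $T_1 - T_2$ could a priori split under the perturbation. I would resolve this by observing that the nullity $n_0(R_2^\epsilon - R_1^\epsilon)$ equals the symplectically invariant intersection dimension $\dim\big(g_\epsilon \tilde J h \cL_1 \cap g_\epsilon \tilde J h \cL_2\big) = \dim \cL_1 \cap \cL_2 = n_0(T_1 - T_2)$ for all small $\epsilon \geq 0$. Since $R_2^\epsilon - R_1^\epsilon$ is rational in $\epsilon$ and Hermitian, its eigenvalues depend analytically on $\epsilon$; the constancy of $n_0$ then forces the zero eigenvalues at $\epsilon = 0$ to remain identically zero, so $n_-$ is also preserved across the family, yielding $n_-(\Delta) = \iD(\cL_1, \cL_2, \cL_3)$.
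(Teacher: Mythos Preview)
Your proof is correct but follows a different route from the paper's. The paper works coordinate-free: it introduces the oblique projections $P_1, P_2$ onto $\cL_1, \cL_2$ parallel to $\cL_3$, realizes $\Delta$ as the graph of $\kappa \mapsto \pi P_1(0,\kappa)^T - \pi P_2(0,\kappa)^T$, and then shows directly that the sesquilinear form $(\kappa_1,\kappa_2) \mapsto \langle \kappa_1, \Delta \kappa_2\rangle$ coincides (after an isomorphism) with the form $Q(\cL_1,\cL_3;\cL_2)$ of \eqref{eq:Qdef}, so \Cref{cor:n_minus_Q} gives $n_-(\Delta) = \iD(\cL_1,\cL_2,\cL_3)$ with no limiting procedure. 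By contrast, you work in frames, symplectically reduce $\cL_3$ to $\cV$, and then invoke \eqref{eq:Dui_index_L3vertical} from \Cref{prop:Dui_index}, which carries an $\epsilon$-regularization; closing the limit $\epsilon \to 0$ requires your constancy-of-nullity argument. That argument is sound (indeed continuity of eigenvalues already suffices once $n_0$ is known to be constant on $[0,\epsilon_0)$), but it is an extra step the paper avoids by going straight to the defining form $Q$. On the other hand, your frame computation of $(\cL_j - \cL_3)^{-1} = \mathrm{graph}(X_j A_j^{-1})$ is pleasantly explicit, and the symplectic reduction idea is natural.
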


\newcommand{\PP}[1]{P_{#1}}

\begin{figure}
  \centering
  \includegraphics[scale=0.9]{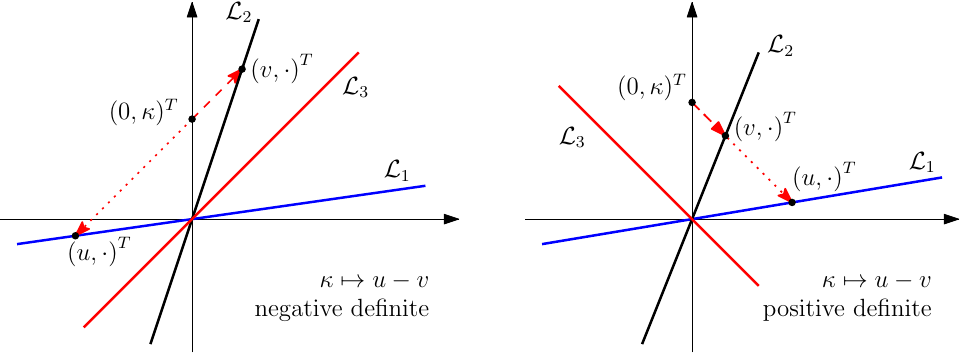}
  \caption{Examples of the action
    $\Delta \colon \kappa \mapsto u(\kappa)-v(\kappa)$ with
    $n_-(\Delta) = 1$ (left) and $n_-(\Delta)=0$ (right).  This should
    be compared with Fig.~\ref{fig:iD_examples}, top row.  Dotted
    and dashed lines illustrate the action of the projectors $\PP1$
    and $\PP2$, respectively.}
  \label{fig:Delta}
\end{figure} 

\begin{proof}
  Let $\PP1$ and $\PP2$ be the
  projections onto $\cL_1$ and $\cL_2$, respectively, parallel to $\cL_3$.
  The transversality conditions on $\cL_3$ imply that their
  restrictions $\PP1\colon \cV \to \cL_1$ and
  $\PP2 \colon \cV \to \cL_2$ are isomorphisms.  Written
  explicitly, they are
  \begin{equation}
    \label{eq:bijV1}
    \PP1 \colon
    \begin{pmatrix}
      0 \\ \kappa
    \end{pmatrix} \in \cV
    \ \mapsto\
    \begin{pmatrix}
      u \\ \alpha
    \end{pmatrix} \in \cL_1
    \quad\text{such that}\quad
    \begin{pmatrix}
      u \\ \alpha
    \end{pmatrix}
    -
    \begin{pmatrix}
      0 \\ \kappa
    \end{pmatrix}
    \in \cL_3,  
  \end{equation}
  and 
  \begin{equation}
    \label{eq:bijV2}
    \PP2 \colon
    \begin{pmatrix}
      0 \\ \kappa
    \end{pmatrix} \in \cV
    \ \mapsto\
    \begin{pmatrix}
      v \\ \beta
    \end{pmatrix} \in \cL_2
    \quad\text{such that}\quad
    \begin{pmatrix}
      v \\ \beta
    \end{pmatrix}
    -
    \begin{pmatrix}
      0 \\ \kappa
    \end{pmatrix}
    \in \cL_3.  
  \end{equation}
  For future use, we observe that
  \begin{equation}
    \label{eq:kerDelta1a}
    \PP1
    \begin{pmatrix}
      0 \\ \kappa
    \end{pmatrix}
    -
    \PP2
    \begin{pmatrix}
      0 \\ \kappa
    \end{pmatrix}
    \in \cL_3,
  \end{equation}
  and
  \begin{equation}
    \label{eq:PPequality}
    \PP1
    \begin{pmatrix}
      0 \\ \kappa
    \end{pmatrix}
    =
    \PP2
    \begin{pmatrix}
      0 \\ \kappa
    \end{pmatrix}
    \qquad\Longleftrightarrow\qquad
    \PP1
    \begin{pmatrix}
      0 \\ \kappa
    \end{pmatrix}
    \in \cL_1 \cap \cL_2.
  \end{equation}
  The non-trivial $\Leftarrow$ direction in
  \eqref{eq:PPequality} follows from the observation that if
  $(u,\alpha)^T$ in \eqref{eq:bijV1} belongs to both $\cL_1$ and
  $\cL_2$, then it satisfies the condition in \eqref{eq:bijV2} and
  therefore coincides with the unique value of $(v,\beta)^T$.
  
  We now compute the Lagrangian plane $\Delta$ from~\eqref{eq:diff_of_Krein_terms}.  Using definition
  \eqref{eq:difference_def}, we write
  \begin{align}
    \nonumber
    \cL_1-\cL_3
    &= \left\{
      \begin{pmatrix}
        u \\ \alpha-\gamma
      \end{pmatrix}
    \in \cK\oplus\cK
    \colon
    \begin{pmatrix}
      u \\ \alpha
    \end{pmatrix}
    \in \cL_1,\
    \begin{pmatrix}
      u \\ \gamma
    \end{pmatrix}
    \in\cL_3\right\}
    \\[2pt]
    &=
      \left\{
      \begin{pmatrix}
        u \\ \kappa
      \end{pmatrix}
    \colon
    \exists \alpha\in\cK \,\text{ such that }
    \begin{pmatrix}
      u \\ \alpha  
    \end{pmatrix}
    \in\cL_1,\,
    \begin{pmatrix}
      u \\ \alpha-\kappa  
    \end{pmatrix}
    \in\cL_3
    \right\},
    \label{eq:L1L3}
  \end{align}
  where the vector $(u,\alpha)^T$ is exactly the image of
  $(0,\kappa)^T$ under $\PP1$.  Defining $\pi \colon \cK\oplus\cK \to \cK$
  to be the projection onto the first component and using
  \eqref{eq:inverse_def}, we arrive at
  \begin{equation}
    \label{eq:L1L3_inv}
    (\cL_1-\cL_3)^{-1} =
    \left\{
      \begin{pmatrix}
        \kappa \\  u(\kappa)
      \end{pmatrix}\in\cK\oplus\cK
      \colon
      \ \kappa\in\cK,
    \right\},
    \qquad \text{where }
    u(\kappa) := \pi \PP1
    \begin{pmatrix}
      0 \\ \kappa
    \end{pmatrix}.
  \end{equation}  
  Analogous considerations for $\cL_2$ result in
  \begin{equation}
    \label{eq:Delta_uv}
    \Delta =
    \left\{
      \begin{pmatrix}
        \kappa \\  u(\kappa) - v(\kappa)
      \end{pmatrix}\in\cK\oplus\cK
      \colon \kappa\in\cK \right\},
    \qquad
    \text{where }
    v(\kappa) := \pi \PP2
    \begin{pmatrix}
      0 \\ \kappa
    \end{pmatrix}.
  \end{equation}

  In other words, $\Delta$ is the graph of an operator mapping
  $\kappa \in\cK$ to $u(\kappa)-v(\kappa)\in\cK$, see
  Fig.~\ref{fig:Delta} for an example.  The kernel of $\Delta$ is the
  space of $\kappa$ such that the corresponding $u(\kappa)$ and
  $v(\kappa)$ coincide.  More precisely,
  \begin{align}
    \begin{pmatrix}
      \kappa \\ 0
    \end{pmatrix}
    \in \ker\Delta
    \quad
    &\Longleftrightarrow\quad
      \pi \PP1
      \begin{pmatrix}
        0 \\ \kappa
      \end{pmatrix}
    -
    \pi \PP2
    \begin{pmatrix}
      0 \\ \kappa
    \end{pmatrix}
    =0
    \nonumber \\
    \label{eq:kerDelta1}
    &\Longleftrightarrow \quad
      \PP1
      \begin{pmatrix}
        0 \\ \kappa
      \end{pmatrix}
    -
    \PP2
    \begin{pmatrix}
      0 \\ \kappa
    \end{pmatrix}
    \in \cV.
  \end{align}
  Combining this with \eqref{eq:kerDelta1a} and the condition
  $\cV\cap\cL_3 = 0$, we obtain
  \begin{equation}
    \label{eq:kerDelta1b}
    \begin{pmatrix}
      \kappa \\ 0
    \end{pmatrix}
    \in \ker\Delta
    \quad\Longleftrightarrow\quad
    \PP1
    \begin{pmatrix}
      0 \\ \kappa
    \end{pmatrix}
    =
    \PP2
    \begin{pmatrix}
      0 \\ \kappa
    \end{pmatrix}.
  \end{equation}
  Finally, \eqref{eq:PPequality} yields that $\ker\Delta$ is
  isomorphic to $\cL_1 \cap \cL_2$.
    
  We now calculate the Duistermaat index using \Cref{cor:n_minus_Q},
  namely by evaluating the Morse index of $Q(\cL_1,\cL_3;\cL_2)$.
  The mapping $L \colon \cL_1\to\cL_3$ appearing in the definition
  \eqref{eq:Qdef} of $Q(\cL_1,\cL_3;\cL_2)$ acts as
  \begin{equation}
    \label{eq:Lmap_13}
    \cL_1 \ni
    \begin{pmatrix}
      u \\ \alpha
    \end{pmatrix}
    =
    \PP1
    \begin{pmatrix}
      0 \\ \kappa
    \end{pmatrix}
    \quad
    \mapsto
    \quad
    \PP2
    \begin{pmatrix}
      0 \\ \kappa
    \end{pmatrix}
    -
    \PP1
    \begin{pmatrix}
      0 \\ \kappa
    \end{pmatrix} \in \cL_3,
  \end{equation}
  cf.\ \eqref{eq:Qdef}, \eqref{eq:kerDelta1a} and the fact that $P_2$
  acts into $\cL_2$.
 Consequently, for any two vectors from $\cL_1$,
  \begin{equation}
    \label{eq:ujkj}
    \begin{pmatrix}
      u_1 \\ \alpha_1
    \end{pmatrix}
    =
    \PP1
    \begin{pmatrix}
      0 \\ \kappa_1
    \end{pmatrix},
    \qquad
    \begin{pmatrix}
      u_2 \\ \alpha_2
    \end{pmatrix}
    =
    \PP1
    \begin{pmatrix}
      0 \\ \kappa_2
    \end{pmatrix},
  \end{equation}
 the form $Q$ acts as
  \begin{equation}
    \label{eq:ZWZ_Q_def}
    \left(
      \begin{pmatrix}
        u_1 \\ \alpha_1
      \end{pmatrix},
      \begin{pmatrix}
        u_2 \\ \alpha_2
      \end{pmatrix}
    \right)
    \ \mapsto\
    \omega \left(
      \PP1
      \begin{pmatrix}
        0 \\ \kappa_1
      \end{pmatrix},
      \PP2
      \begin{pmatrix}
        0 \\ \kappa_2
      \end{pmatrix}
      -
      \PP1
      \begin{pmatrix}
        0 \\ \kappa_2
      \end{pmatrix}
    \right).
  \end{equation}

  We now note that for the purpose of computing the index, the form
  $Q$ can be viewed as a Hermitian form on $\cV$. Observing that
  \begin{equation}
    \label{eq:Qexpand1}
    \omega\left(
      \PP1
      \begin{pmatrix}
        0 \\ \kappa_1
      \end{pmatrix}
      -
      \begin{pmatrix}
        0 \\ \kappa_1
      \end{pmatrix},\ 
      \PP2
      \begin{pmatrix}
        0 \\ \kappa_2
      \end{pmatrix}
      - \PP1
      \begin{pmatrix}
        0 \\ \kappa_2
      \end{pmatrix}
    \right) = 0,
  \end{equation}
  which holds because both arguments are in the Lagrangian plane
  $\cL_3$, we have for the value of $Q$,
  \begin{align}
    \label{eq:Qalt}
    \omega \left(
      \PP1
      \begin{pmatrix}
        0 \\ \kappa_1
      \end{pmatrix},
      \PP2
      \begin{pmatrix}
        0 \\ \kappa_2
      \end{pmatrix}
      -
      \PP1
      \begin{pmatrix}
        0 \\ \kappa_2
      \end{pmatrix}
    \right)
    &= \omega \left(
      \begin{pmatrix}
        0 \\ \kappa_1
      \end{pmatrix},\ 
      \PP2
      \begin{pmatrix}
        0 \\ \kappa_2
      \end{pmatrix}
      - \PP1
      \begin{pmatrix}
        0 \\ \kappa_2
      \end{pmatrix}
    \right)
    \\ \nonumber
    &= \left<\kappa_1, u(\kappa_2) - v(\kappa_2) \right>_\cK,
  \end{align}
  which is exactly the sesquilinear form corresponding to
  $\Delta\colon \kappa \mapsto u(\kappa) - v(\kappa)$, see
  \eqref{eq:Delta_uv}.  By \Cref{cor:n_minus_Q}, we conclude that
  \begin{equation}
    \label{eq:iDQh}
    \iD(\cL_1,\cL_2,\cL_3) = n_-\!\left(\Delta\right),
  \end{equation}
  which establishes the desired result.
\end{proof}

Next we provide a proof of \Cref{thm:resolvent_diff} by virtue of a slightly more general result. 

\begin{theorem}
  \label{thm:resolvent_diff_new}
  Assume the setting of Theorem \ref{thm:main}. For
  $\lambda\in \rho(H_1)\cap\rho(H_2) \cap \bbR$, the operator
  $D(\lambda)$, defined in \eqref{eq:resolvent_diff}, is reduced by
  the decomposition $\cH=\cN_{\lambda}\oplus \cN_{\lambda}^{\perp}$,
  where $\cN_{\lambda}:=\ker(\minop^*-\lambda)$.  The block form of
  $D(\lambda)$ with respect to this decomposition is
  \begin{equation}
    \label{6.22}
    D(\lambda)=\begin{pmatrix}
      K(\lambda) 
      &0 \\
      0 &0
    \end{pmatrix},
  \end{equation}
  where $K(\lambda) \colon \cN_{\lambda} \to \cN_{\lambda}$ is the
  restriction of $D(\lambda)$ to $\cN_{\lambda}$. For an arbitrary
  interval $I\subset \rho(H_1) \cap \rho(H_2) \cap \mathbb R$, the
  eigenvalues of $K(\lambda)$ depend continuously on $\lambda\in I$
  and one has
  \begin{align}
    \label{eq:index_K0}
    n_0\big(K(\lambda)\big) &= \dim\cL_1\cap \cL_2,
    \\
    \label{eq:index_K-}
    n_-\big(K(\lambda)\big) &= n_-\big(D(\lambda)\big)
                              = \iD\!\big(\cL_1, \cL_2, \cM(\lambda)\big),
    \\
    \label{eq:index_K+}
    n_+\big(K(\lambda)\big) &= n_+\big(D(\lambda)\big)
                              = \iD\!\big(\cL_2, \cL_1, \cM(\lambda)\big).
  \end{align}
  In particular, $K(\lambda)$ has constant rank
  \begin{equation}\label{1.27}
    r=n-\dim \cL_1\cap \cL_2=\iD\!\big(\cL_1, \cL_2, \cM(\lambda)\big)
    +\iD\!\big(\cL_2, \cL_1, \cM(\lambda)\big)
  \end{equation} 
  and the functions $\lambda\mapsto n_{\pm}(K (\lambda))$ are constant
  on $I$.
\end{theorem}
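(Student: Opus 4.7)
The plan is to reduce the theorem to the purely algebraic identity in \Cref{prop:diff_of_Krein_terms}, transported through the Krein resolvent formula. I would establish the block decomposition \eqref{6.22} first: since both $H_1$ and $H_2$ extend $\minop$, for any $h\in\dom(\minop)$ one has $(H_j-\lambda)^{-1}(\minop-\lambda)h = h$, so $D(\lambda)$ annihilates $\Ran(\minop-\lambda)$ and by continuity the whole of $\cN_\lambda^\perp$. Self-adjointness of $D(\lambda)$, as a difference of bounded self-adjoint operators, then forces $\Ran D(\lambda) \subset \cN_\lambda$, producing the diagonal block form and identifying $K(\lambda) = D(\lambda)\big|_{\cN_\lambda}$.

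Next I would invoke the Krein resolvent formula in the boundary-triplet form of \cite[Ch.~2]{BehHasDeS_boundarytriples} to write
\[
K(\lambda) = \gamma(\lambda)\,\Delta(\lambda)\,\gamma(\lambda)^*, \qquad \Delta(\lambda) := (\cL_1 - \cM(\lambda))^{-1} - (\cL_2 - \cM(\lambda))^{-1},
\]
where $\gamma(\lambda) \colon \cK \to \cN_\lambda$ is the $\gamma$-field, a bounded bijection for $\lambda\in\rho(H_1)\cap\rho(H_2)$. The object $\Delta(\lambda)$ is precisely the linear relation of \Cref{prop:diff_of_Krein_terms} with $\cL_3 = \cM(\lambda)$: the required transversalities $\cM(\lambda)\cap\cL_j = 0$ follow from \Cref{lem:intersection_dim} together with $\lambda\in\rho(H_j)$ (which also forces $\ker(\minop-\lambda) = 0$), while transversality of $\cM(\lambda)$ to $\cV$ can be enforced by the $\epsilon$-shift trick from the proof of \Cref{prop:Dui_index} or by switching to an equivalent boundary triplet whose vertical plane equals $\cF$. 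Applying \Cref{prop:diff_of_Krein_terms} yields $n_0(\Delta(\lambda)) = \dim\cL_1\cap\cL_2$ and $n_-(\Delta(\lambda)) = \iD(\cL_1,\cL_2,\cM(\lambda))$, and then $n_+(\Delta(\lambda)) = \iD(\cL_2,\cL_1,\cM(\lambda))$ follows from $n_-(\Delta) + n_+(\Delta) = n - n_0(\Delta)$ combined with \eqref{eq:iDswap1}. Because $\gamma(\lambda)$ is an isomorphism, Sylvester's law of inertia transfers these three indices verbatim to $K(\lambda)$, establishing \eqref{eq:index_K0}--\eqref{eq:index_K+}; the block form \eqref{6.22} then gives $n_\pm(D(\lambda)) = n_\pm(K(\lambda))$ and the rank identity \eqref{1.27}.

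For the continuity and constancy claims, \Cref{thm:analyticM_general} supplies an analytic trivialization of $\cM(\lambda)$ on $I \subset \Phi_-(\minop^*)$, turning $\Delta(\lambda)$ into an analytic family of self-adjoint operators on the fixed space $\cK$; its eigenvalues depend analytically on $\lambda$, and Sylvester transports this continuous dependence to the eigenvalues of $K(\lambda)$. Since $n_0(K(\lambda)) = \dim\cL_1\cap\cL_2$ is independent of $\lambda$, no eigenvalue of $K(\lambda)$ can cross zero as $\lambda$ varies over the connected interval $I$, so $n_\pm(K(\lambda))$ and $\rank K(\lambda)$ are constant on $I$.

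The main obstacle is securing the Krein resolvent formula in the requisite generality: classical statements typically require $\lambda$ to lie in the resolvent set of a distinguished extension (so that $\cM(\lambda)$ is transversal to $\cV$ and the Weyl function is an ordinary bounded operator), or assume unique continuation for $\minop$. Here neither is available, so the formula must be read with $\cM(\lambda)$ as a Lagrangian plane and $\Delta(\lambda)$ as a self-adjoint linear relation in the sense of Section~\ref{sec:proof_krein_resolvent}. Once that reading is justified — which is the essential technical input drawn from the theory of boundary triplets — everything else reduces to a direct application of \Cref{prop:diff_of_Krein_terms} combined with the analytic dependence of $\cM(\lambda)$ supplied by \Cref{thm:analyticM_general}.
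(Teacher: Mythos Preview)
Your overall strategy matches the paper's: block decomposition via $\Ran D(\lambda)\subset\cN_\lambda$, then Krein's formula together with \Cref{prop:diff_of_Krein_terms}, then Sylvester. The substantive difference lies in how the transversality condition $\cM(\lambda)\cap\cV=0$ (equivalently $\lambda\in\rho(H_0)$) is handled. You try to secure it for every $\lambda\in I$ via an $\epsilon$-shift or change of triplet, whereas the paper sidesteps the issue with a two-stage argument: it first proves \eqref{eq:index_K0} for \emph{all} $\lambda\in I$ using a different resolvent-difference formula (from \cite{LatSuk_prep20}, namely $K(\lambda)=(\tr(H_1-\lambda)^{-1})^*P_1JP_2\,\tr(H_2-\lambda)^{-1}|_{\cN_\lambda}$, which needs no transversality to $\cV$), establishes continuity of eigenvalues by trivializing $\cN_\lambda$ with the transformation functions $F_\lambda$ from the proof of \Cref{thm:analyticM_general}, deduces that $n_\pm$ are constant on $I$, and only \emph{then} applies the classical Krein--Naimark formula at a single $\lambda\in I\setminus\spec(H_0)$ (such points exist because $I$ avoids the essential spectrum). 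This lets the paper use the most standard form of Krein's formula without appealing to any relation-theoretic generality.

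There is a genuine gap in your continuity argument. You write that Sylvester ``transports this continuous dependence to the eigenvalues of $K(\lambda)$,'' but Sylvester's law of inertia transfers only the signature $(n_-,n_0,n_+)$, not individual eigenvalues; since $\gamma(\lambda)$ is not unitary, the eigenvalues of $K(\lambda)=\gamma(\lambda)\Delta(\lambda)\gamma(\lambda)^*$ are not those of $\Delta(\lambda)$. Moreover, $\gamma(\lambda)$ itself is only defined for $\lambda\in\rho(H_0)$, so you cannot invoke its continuity across points of $\spec(H_0)\cap I$. The paper's fix---conjugating $K(\lambda)$ by the bijections $F_\lambda\colon\cN_{\lambda_0}\to\cN_\lambda$ to obtain a continuous family on the fixed space $\cN_{\lambda_0}$---avoids this entirely. (A workable alternative would be to note that $D(\lambda)$ itself is norm-continuous on $I$ and has finite rank, hence its nonzero eigenvalues vary continuously; combined with $\dim\cN_\lambda=n$ being constant on $I$, this recovers the claim.)
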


\begin{proof}
  Since $H_j \subset \minop^*$, we have $(\minop^*-\lambda)
  (H_j-\lambda)^{-1} =I_\cH$ for $j=1,2$, therefore
  $(\minop^*-\lambda)D(\lambda)=0$, i.e., $\Ran D(\lambda) \subset
  \cN_{\lambda}$.  This establishes the second row of \eqref{6.22}
  and, by self-adjointness, the whole of \eqref{6.22}.

To prove the continuity of the eigenvalues of $K(\lambda)$ (acting on the $\lambda$-dependent spaces $\cN_\lambda$) we first note that $\minop^*-\lambda$ is onto in $\cH$ as an extension of a surjective operator $H_1-\lambda$ with $\lambda\in\rho(H_1)$. Therefore, by steps three and four in the proof of \Cref{thm:analyticM_general}, there exists a continuous family of bijections $F_{\lambda}$ mapping $\cN_{\lambda_0}$ onto $\cN_{\lambda}$. The family of operators $\lambda\mapsto F_{\lambda}^{-1}K(\lambda)F_{\lambda}$ acting on the $\lambda$-independent Hilbert space $\cN_{\lambda_0}$ is continuous and for each $\lambda\in I$ the eigenvalues of $K(\lambda)$ and $F_{\lambda}K(\lambda)F_{\lambda}^{-1}$ coincide. These facts yield the desired continuity assertion.

We next show that \eqref{eq:index_K0} holds for all $\lambda \in \rho(H_1) \cap \rho(H_2) \cap \mathbb R$. For this we use the resolvent difference formula  from \cite[Thm.~2.5]{LatSuk_prep20},
	\begin{equation}\lb{newKrein}
		K(\lambda)= \big(\tr (H_1-\lambda)^{-1} \big)^* P_1 J P_2 \tr (H_2-\lambda)^{-1}\big|_{\cN_{\lambda}},
	\end{equation}
	where 
	$P_j$ denotes the orthogonal projection onto $\cL_j$ in $\cK\oplus\cK$. 
	By \Cref{propositionc.16} the maps 
	\begin{equation}
	\tr (H_2-\lambda)^{-1}\colon \cN_{\lambda}\rightarrow \cL_2,\qquad \big(\tr (H_1-\lambda)^{-1}\big)^*\colon \cL_1\rightarrow\cN_{\lambda},
	\end{equation}
are bijective. 
 Using \eqref{newKrein} and the identity $P_1 J = J(I-P_1)$ from \eqref{PJ}, 
we get
	\begin{equation}
		\dim\ker K(\lambda)=\dim\ker \big(P_1 J |_{\cL_2} \big) 
		= \ker \big((I-P_1) |_{\cL_2} \big)=\cL_1\cap \cL_2,
	\end{equation}
	proving \eqref{eq:index_K0}.
	
It follows that $n_0\big(K(\lambda)\big)$ is constant on the interval $I$, so the same is true of 
$n_\pm\big(K(\lambda)\big)$, since the eigenvalues are continuous. It thus suffices to prove 
\eqref{eq:index_K-} and \eqref{eq:index_K+} for a single $\lambda \in I$. We will choose 
$\lambda\in I \backslash \spec(H_0)$, where $H_0$ is the extension of $\minop$ with 
 $\tr(\dom(H_0))=\ker\Gamma_0$. Such a $\lambda$ always exists
 because $I\subset\rho(H_1)\cap\rho(H_2)$ does not intersect $\spess(H_j) = \spess(\minop)$, 
and therefore $I$ can only contain isolated eigenvalues of $H_0$.

For $\lambda\in I\backslash \spec(H_0)$ we may apply the classical Krein--Naimark formula,  
\begin{equation}
	(H_j - \lambda)^{-1} - (H_0-\lambda)^{-1} = \gamma(\lambda) \big(\cL_j - \cM(\lambda) \big)^{-1}\gamma({\lambda})^*,
\end{equation}
where $\gamma(\lambda):=(\Gamma_0|_{\cN_{\lambda}})^{-1}\in \mathcal B (\cK, \cH)$ is the $\gamma$-field; see, for example, \cite[Thm.~14.18]{Schmudgen_unboundedSAO}.
Applying this to $D(\lambda)=(H_1-\lambda)^{-1}-(H_2-\lambda)^{-1}$ yields
\begin{equation}\lb{5.2new}
	D(\lambda) = \gamma(\lambda)\left((\cL_1-\cM(\lambda))^{-1}-(\cL_2-\cM(\lambda))^{-1}\right)\gamma({\lambda})^*.
\end{equation}
Employing \Cref{prop:diff_of_Krein_terms} with $\cL_3=\cM(\lambda)$ (the transversality conditions are satisfied since $\lambda$ is not an eigenvalue of $H_1$, $H_2$ or $H_0$) together with the fact that $\gamma(\lambda) \colon \cK\rightarrow \cN_{\lambda}$ is a bijection (see \cite[Lem.~14.13(ii)]{Schmudgen_unboundedSAO}), we get
\begin{align}
	\begin{split}\label{eq:i12new}
		\iD\!\big(\cL_1, \cL_2, \cM(\lambda)\big)&=n_-\!\left(\big(\cL_1-\cM(\lambda)\big)^{-1} - \big(\cL_2-\cM(\lambda) \big)^{-1}\right) 
		=n_-(D(\lambda)), 
	\end{split}
\end{align}
which is exactly \eqref{eq:index_K-}.
To complete the proof we note that \eqref{1.27} follows from \eqref{eq:index_K0} and the identity \eqref{eq:iDswap1} for the Duistermaat index. Combining this with \eqref{eq:index_K-} yields \eqref{eq:index_K+}.
\end{proof}

\begin{lemma}\label{propositionc.16}Assume that $H$ is a self-adjoint
  extension of $\minop$ corresponding to the Lagrangian plane
  $\cL\subset\cK\oplus \cK$. For $\lambda \in \rho(H)$, we let
  $R_\lambda := (H-\lambda)^{-1}$ and consider $\tr R_{\lambda}\colon
  \cH\rightarrow \cK\oplus \cK$.  Then
	\begin{equation}\lb{c.21}
	\ker(\tr R_{\lambda})=\overline{\ran(\minop-\lambda)},\qquad \ran(\tr R_{\lambda})=\cL.
	\end{equation}
In particular, $\tr R_{\lambda}$ is a bijection between $\ker(\minop^*-\overline{\lambda})$ and $\cL$.
\end{lemma}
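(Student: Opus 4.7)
The plan is to prove the two identities in \eqref{c.21} by direct manipulation using the defining relations $S \subset H \subset S^*$ together with $\dom(H)=\{f\in\dom(S^*):\tr f\in\cL\}$ and $\ker\tr=\dom(S)$ from \eqref{c.8}; the ``in particular'' statement then follows from elementary Hilbert-space orthogonality.

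For the first identity, I would begin by showing the inclusion $\ran(S-\lambda)\subset\ker(\tr R_\lambda)$: given $g=(S-\lambda)f$ with $f\in\dom(S)$, the fact that $H$ extends $S$ yields $g=(H-\lambda)f$, so $R_\lambda g=f\in\dom(S)=\ker\tr$, whence $\tr R_\lambda g=0$. Conversely, if $\tr R_\lambda g=0$, set $f:=R_\lambda g\in\dom(H)$; then $\tr f=0$ places $f$ in $\ker\tr=\dom(S)$, so $g=(H-\lambda)f=(S-\lambda)f\in\ran(S-\lambda)$. Thus $\ker(\tr R_\lambda)=\ran(S-\lambda)$. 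Since $\tr\colon\dom(S^*)\to\cK\oplus\cK$ and $R_\lambda\colon\cH\to\dom(H)\subset\dom(S^*)$ are both bounded (where $\dom(S^*)$ is equipped with the graph norm), the composition $\tr R_\lambda$ is bounded, so its kernel is closed. Consequently $\ran(S-\lambda)$ is already closed and coincides with $\overline{\ran(S-\lambda)}$, matching the statement in \eqref{c.21}.

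For the range identity, the inclusion $\ran(\tr R_\lambda)\subset\cL$ is immediate: for any $g\in\cH$, the element $f:=R_\lambda g$ lies in $\dom(H)$, so $\tr f\in\cL$ by \eqref{eq:domain_extension}. For the reverse inclusion, given $v\in\cL$, I would use surjectivity of $\tr$ (one of the defining properties of a boundary triplet) to produce $f\in\dom(S^*)$ with $\tr f=v$; since $\tr f\in\cL$, automatically $f\in\dom(H)$. Setting $g:=(H-\lambda)f\in\cH$ yields $R_\lambda g=f$, so $\tr R_\lambda g=v$. Hence $\ran(\tr R_\lambda)=\cL$.

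Finally, for the bijection statement, I would invoke the standard orthogonal decomposition $\cH=\overline{\ran(S-\lambda)}\oplus\ker(S^*-\overline{\lambda})$. By the two identities just proved, $\ker(\tr R_\lambda)=\overline{\ran(S-\lambda)}$, so the restriction $\tr R_\lambda|_{\ker(S^*-\overline{\lambda})}$ has trivial kernel. Its range equals the range of $\tr R_\lambda$ on all of $\cH$, namely $\cL$, because the orthogonal complement maps to zero. I do not anticipate any serious obstacle: the only subtle point is verifying that $\ker(\tr R_\lambda)$ is closed to reconcile $\ran(S-\lambda)$ with $\overline{\ran(S-\lambda)}$, and this follows immediately from boundedness of $R_\lambda$ into $(\dom(S^*),\|\cdot\|_{\mathrm{graph}})$ combined with boundedness of $\tr$ on that space.
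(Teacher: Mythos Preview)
Your proof is correct and follows essentially the same route as the paper: both arguments verify the kernel identity by showing $\ker(\tr R_\lambda)=\ran(S-\lambda)$ via $\ker\tr=\dom(S)$, and both obtain the range identity from $\ran(\tr R_\lambda)=\tr(\dom(H))=\cL$. Your version is in fact more complete, since you explicitly justify why $\ran(S-\lambda)$ is closed (via boundedness of $\tr R_\lambda$) and spell out the bijection onto $\cL$ from the orthogonal decomposition $\cH=\overline{\ran(S-\lambda)}\oplus\ker(S^*-\overline{\lambda})$, both of which the paper leaves implicit.
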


\begin{proof}
	To show the inclusion $\ker(\tr R_{\lambda})\subset{\ran(\minop-\lambda)}$, suppose that $\tr R_{\lambda}u=0$. Then by \eqref{c.8} one has $R_{\lambda}u=v$ for some $v\in\dom(S)$, hence $u=(S-\lambda)v$ as required. To prove $\ker(\tr R_{\lambda})\supset{\ran(\minop-\lambda)}$, we note that $R_{\lambda}(\minop-\lambda)v=v$ and use \eqref{c.8}. For the second identity we note that $\Ran(\tr R_{\lambda})=\tr(\dom(H))=\cL$. 
\end{proof}

Finally, we explain how this implies the  interlacing formulas in \Cref{thm:main}.

\begin{proof}[Proof of \Cref{thm:main} using \Cref{thm:resolvent_diff}]
Using \eqref{iotalimit}, we can choose a large negative number $\lambda_*$ that satisfies 
\begin{equation}
  \label{eq:indices_well_below}
  \iD\!\big(\cL_1, \cL_2, \cM(\lambda_*)\big)
  = \iD(\cL_1, \cL_2, \cF ) = \smin,
  \qquad 
  \iD\!\big(\cL_2, \cL_1, \cM(\lambda_*)\big)
  = \iD(\cL_2, \cL_1, \cF ) = \smax
\end{equation}
and is below the spectra of $H_1$ and $H_2$. For such $\lambda_*$ the
eigenvalues of $R_1 := (\lambda_*-H_1)^{-1}$ are negative and bounded;
we label them in increasing order as $\mu_k(R_1)$, and likewise
for $R_2 := (\lambda_*-H_2)^{-1}$.  Recalling the definition of
$D(\lambda)$ in \eqref{eq:resolvent_diff}, we write
$R_2 = R_1 + D(\lambda_*)$ and note that $n_-\big(D(\lambda_*)\big) =
\smin$ and $n_+\big(D(\lambda_*)\big) = \smax$
by \Cref{thm:resolvent_diff}. Applying Weyl interlacing for additive
finite-rank perturbations, we get
\begin{equation}
  \label{muinterlacing}
  \mu_{k - \smin}(R_1) \leq \mu_k(R_2) \leq \mu_{k + \smax}(R_1).
\end{equation}
The corresponding eigenvalues of $H_{j}$ are computed by the
monotone increasing trasformation
$\lambda_k(H_j) = \lambda_* - 1/\mu_k(R_j)$, yielding
\eqref{eq:mainWeylInterlacing}.
\end{proof}

\section{Examples and applications}
\label{sec:examples}

Having proved \Cref{thm:main}, we now discuss some of its consequences. In particular, 
we compare a variety of boundary conditions for a Schr\"odinger operator on the interval 
$(0,1)$ and derive a counting formula of Behrndt and Luger for the Laplacian on quantum 
graphs. We also demonstrate that the upper and lower bounds in \eqref{eq:spec_shift_bound} 
are optimal. Finally, we give an example of an operator not satisfying the UCP, 
where we see that the Maslov index undercounts the eigenvalues even though our theorem 
remains valid.

\subsection{Examples with $n=2$}
\label{sec:examples_n2}

\begin{table}[t]
  \centering
  \footnotesize
  \bgroup
  \def\arraystretch{1.2}
  \begin{tabular}{|c|c|c|c|}
    \hline
    Name
    & Conditions
    & Frame
    & Notation \\ \hline
    Periodic
    & 
    $\begin{array}{l}
        f(0)=f(1),\\  f'(0) = f'(1)
      \end{array}$
    & $X = \begin{pmatrix}
      1 & 0 \\ 1 & 0
    \end{pmatrix}, \ 
                   Y =
                   \begin{pmatrix}
                     0 & 1 \\ 0 & -1
                   \end{pmatrix}$
    & $\cL_{\mathrm{per}}$ \\ \hline
    $\delta$-type
    &
      $\begin{array}{l}
        f(0)=f(1),\\ f'(0)-f'(1) = s f(0)
      \end{array}$
    & $X = \begin{pmatrix}
      1 & 0 \\
      1 & 0
    \end{pmatrix},\ 
          Y =
          \begin{pmatrix}
            s & 1 \\ 0 & -1
          \end{pmatrix}$
    & $\cL_\delta(s)$
    \\ \hline
    Antiperiodic
    & $
      \begin{array}{l}
        f(0)=-f(1),\\ f'(0) = -f'(1)
      \end{array}$
    & $X =
      \begin{pmatrix}
        1 & 0 \\
        -1 & 0
      \end{pmatrix},\
             Y =
             \begin{pmatrix}
               0 & 1 \\ 0 & 1
             \end{pmatrix}$
    & $\cL_{\mathrm{aper}}$  \\ \hline
    $\delta'$-type
    & $
      \begin{array}{l}
        f(0)+f(1) = s f'(0),\\  f'(0) = -f'(1)
      \end{array}$
    & $X =
      \begin{pmatrix}
        1 & s \\
        -1 & 0
      \end{pmatrix},\
             Y =
             \begin{pmatrix}
               0 & 1 \\ 0 & 1
             \end{pmatrix}$
    & $\cL_{\delta'}(s)$  \\ \hline
  \end{tabular}
  \egroup
  \caption{Some commonly encountered boundary conditions and the
    corresponding Lagrangian frames. Note that periodic conditions are
    also called \emph{Neumann--Kirchhoff} or \emph{standard}
    conditions in the context of quantum graphs.}
  \label{tab:BCs}
\end{table}

The prototypical example with $n=2$ is an interval $(0,1)$ with the
Schr\"odinger operator $\minop = -\frac{d^2}{dx^2} + q(x)$. 
However, the results below apply equally
well to a compact quantum graph \cite{BerKuc_graphs} with self-adjoint
conditions imposed everywhere except for two vertices of degree one.  More sophisticated contexts 
 include manifolds with conical singularities \cite{GilKraMen_cjm07}  and \v{S}eba
billiards
\cite{Seb_prl90,BogGirSch_pre02,RahFis_n02,KeaMarWin_jmp10,KurUeb_gafa14,KurUeb_imrn23}
with two or more delta potentials.

For $\minop = -\frac{d^2}{dx^2} + q(x)$, with potential $q \in L^\infty(0,1)$, we have 
$\cH = L^2(0,1)$, $\dom(\minop) = H^2_0(0,1)$ and $\dom(\minop^*) = H^2(0,1)$. 
Both defect numbers are 2, so the self-adjoint extensions are parameterized by Lagrangian planes 
in $\bbC^4$, see \cite[Ex.~14.2 and~14.10]{Schmudgen_unboundedSAO}. The traditional choice
of traces is
\begin{equation}
  \label{eq:traces_interval}
  \Gamma_0 f =
  \begin{pmatrix}
    f(0) \\ f(1)
  \end{pmatrix},
  \qquad
  \Gamma_1 f =
  \begin{pmatrix}
    f'(0) \\ -f'(1)
  \end{pmatrix}.
\end{equation}
With this choice, the Friedrichs extension corresponds to the
vertical plane, $\cF = \cV$. 

We now use  \Cref{thm:main} to compare 
the boundary conditions listed in Table~\ref{tab:BCs}.

\subsubsection{Periodic vs $\delta$-type conditions}
At $s=0$ the $\delta$-type condition
reduces to the periodic condition, $\cL_\delta(0) = \cL_{\mathrm{per}}$, so 
it suffices to consider $s \neq 0$. In this case the rank of the perturbation is 
$2-\dim \big(\cL_{\mathrm{per}}\cap\cL_\delta(s)\big) = 1$. 
The
corresponding $\epsilon$-Robin maps are 
\begin{equation}
  \label{eq:Rperiodic}
  R_\mathrm{per}^\epsilon =
  \frac{1}{2\epsilon}
  \begin{pmatrix}
    1 & -1 \\ -1 & 1
  \end{pmatrix},
  \qquad
  R^\epsilon_\delta = \frac{1}{\epsilon(2 + \epsilon s)}
  \begin{pmatrix}
    1 +  \epsilon s  & -1 \\
    -1 & 1 + \epsilon s
  \end{pmatrix},
\end{equation}
therefore 
\begin{equation}
  \label{eq:R21}
  R^\epsilon_\delta - R^\epsilon_\mathrm{per}
  = \frac{s}{2(2 + \epsilon s)}
  \begin{pmatrix}
    1 & 1 \\ 1 & 1
  \end{pmatrix}.
\end{equation}
Using \Cref{prop:Dui_index}, we conclude that
\begin{equation}
  \iD\!\big(\cL_\mathrm{per},\cL_\delta(s),\cF\big) =
  \begin{cases}
    0, & s > 0, \\ 1, & s < 0,
  \end{cases}
  \qquad\text{and}\qquad
  \big(\smin, \smax\big)
  =
  \begin{cases}
    (0,1), & s > 0, \\
    (1,0), & s < 0,
  \end{cases}
\end{equation}
recovering a well-known result
\cite{BerKuc_incol12,BerKenKurMug_tams19}.

\subsubsection{Periodic vs antiperiodic}

This important case arises in the spectral analysis of Hill's operator and
certain other $\Z$-periodic quantum graphs
\cite{ExnKucWin_jpa10} (namely, those with one edge crossing the boundary of
  the fundamental domain).

For the antiperiodic conditions, we have
\begin{equation}
  \label{eq:R3}
  R^\epsilon_\mathrm{aper} =
  \frac{1}{2\epsilon}
  \begin{pmatrix}
    1 & 1 \\ 1 & 1
  \end{pmatrix},
  \qquad 
  R^\epsilon_\mathrm{aper} - R^\epsilon_\mathrm{per}
  = \frac{1}{\epsilon}
  \begin{pmatrix}
    0 & 1 \\ 1 & 0
  \end{pmatrix}.
\end{equation}
We therefore get
\begin{equation}
  \label{eq:per_aper}
  \big(\smin, \smax\big)
  = (1,1),
\end{equation}
which agrees, for instance, with the interlacing in Hill's equation
\cite[Eq.~(2.4) in Thm.~2.1]{MagnusWinkl_hills}.

\subsubsection{Antiperiodic vs $\delta'$-type conditions}

The antiperiodic conditions are a special case of the $\delta'$-type
conditions with $s=0$.  For $s\neq0$ we have
$\dim\big(\cL_\mathrm{aper} \cap \cL_{\delta'}(s)\big) = 1$, so the rank of the
perturbation is $1$.  Computing the $\epsilon$-Robin map, we obtain
\begin{equation}
  \label{eq:R4}
  R^\epsilon_{\delta'}
  = \frac{1}{s + 2\epsilon}
  \begin{pmatrix}
    1 & 1 \\ 1 & 1
  \end{pmatrix},
  \qquad
  R^\epsilon_\mathrm{aper} -   R^\epsilon_{\delta'}
  = -\frac{s}{2\epsilon(s + 2\epsilon)}
  \begin{pmatrix}
    1 & 1 \\ 1 & 1
  \end{pmatrix}.
\end{equation}
For $s \neq 0$ we see that
$-\frac{s}{2\epsilon(s + 2\epsilon)} \approx -\frac{1}{2\epsilon}$ is
negative for $0 < \epsilon \ll1 $, and hence $\smin = 1$.
Taking the rank of the perturbation into account, we get
\begin{equation}
  \label{eq:aper_deltaprime}
  \big(\smin, \smax\big)
  = (1,0),
\end{equation}
or, in terms of the eigenvalues,
\begin{equation}
  \label{eq:aper_dprime_eig}
  \lambda_{k-1}(H_\mathrm{aper}) \leq \lambda_k\big(H_{\delta'}(s)\big)
  \leq \lambda_k(H_\mathrm{aper}).
\end{equation}

This may be somewhat unexpected, since it implies that 
$\lambda_k\big(H_{\delta'}(s)\big)$ achieves its maximum at $s=0$, while the 
eigenvalue variation formulas of \cite{LatSuk_prep20} can be used to
show that
\begin{equation}
  \label{eq:HFLS}
  \frac{d\lambda}{ds} = - |f_s'(0)|^2,
\end{equation}
where $f_s$ is the normalized eigenfunction corresponding to the
eigenvalue $\lambda(s)$ (assuming it is simple). These two facts are 
reconciled by the observation that $\lambda_1(s) \to
-\infty$ as $s\to0-$, therefore
the \emph{ordered} eigenvalue curves $s \mapsto \lambda_k\big(H_{\delta'}(s)\big)$ are 
discontinuous at $s=0$, as shown in Fig.~\ref{fig:eig_deltaprime}.

\begin{figure}[t]
  \centering
  \includegraphics[scale=0.75]{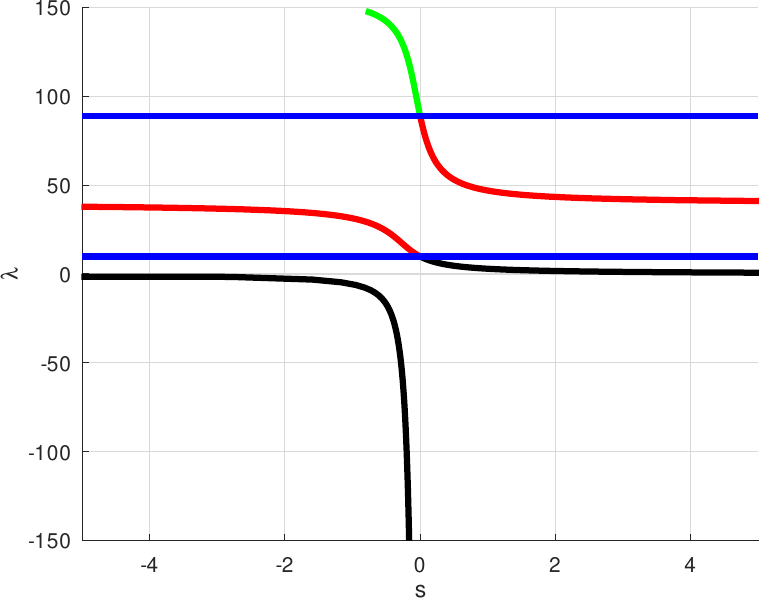}
  \caption{The first five eigenvalues of $H_{\delta'}(s)$, plotted as functions 
  of $s$.  The
    curves are colored according to the index of the eigenvalue:
    $\lambda_1$ is black, $\lambda_2$ is blue, $\lambda_3$ is red
    etc. In particular, the blue lines (from bottom to top) are 
    $\lambda_2\big(H_{\delta'}(s)\big) = \lambda_{1,2}(H_{\rm aper})$
    and $\lambda_4\big(H_{\delta'}(s)\big) = \lambda_{3,4}(H_{\rm aper})$.}
  \label{fig:eig_deltaprime}
\end{figure}

\subsection{Counting negative eigenvalues: the Behrndt--Luger formula}
\label{sec:BL}

In \cite{BehLug_jpa10}, Behrndt and Luger derived a convenient formula
for the number of negative eigenvalues of the Laplacian on a metric
graph.  Here we show how their formula can be obtained from our
results.

We first recall that any Lagrangian plane $\cL$ can be described by a
frame of the form $(P,\, P \Theta P + P - I )$, where
$P\colon \cK\to\cK$ is an orthogonal projector and
$\Theta\colon \Ran P \to \Ran P$ is a self-adjoint operator; see
\Cref{sec:param}. This corresponds to imposing the Dirichlet-type
condition $(I-P) \Gamma_0 f = 0$ and the Robin-type condition
$P \Gamma_1 f = \Theta P \Gamma_0 f$.

\begin{corollary}
  \label{thm:BL_formula}
  Within the setting of \Cref{thm:main}, assume $S$ is non-negative\footnote{Equivalently, 
  the Friedrichs extension $H_F$ is non-negative.} and
  let $H$ be an extension of $S$ specified by a Lagrangian plane
  $\cL$.  Then the Morse index of $H$ is
  \begin{equation}
    \label{eq:Morse_Duistermaat}
    n_-(H) = \iD\!\big( \cM(0-), \cL, \cF \big),
  \end{equation}
  where $\cM(0-)$ is the left-hand limit of $\cM(s)$, as in~\Cref{prop:cM_real_s}.

Furthermore, if $\dom(H_F) = \ker \Gamma_0$ and $\cM(0-)$ is a graph of an 
operator $M_0\colon \cK \to \cK$, then
  \begin{equation}
    \label{eq:BL_formula}
    n_-(H) = n_+(PM_0P - \Theta),
  \end{equation}
  where $(P,\, P \Theta P + P - I )$ is a Lagrangian frame for $\cL$.
\end{corollary}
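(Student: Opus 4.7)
The plan is to compare $H$ with the Friedrichs extension via \Cref{thm:shift_Hormander}, then pass to the limit $\lambda \to 0-$. Taking $H_1 = H$, $H_2 = H_F$, $\cL_1 = \cL$, $\cL_2 = \cF$ in \eqref{eq:shift_triple}, and using the identity $\iD(\cL, \cF, \cF) = 0$ from \eqref{eq:iD_special_cases}, I obtain
\begin{equation*}
\sigma(H, H_F; \lambda) = \iD\!\big(\cL, \cF, \cM(\lambda)\big)
\qquad \text{for every } \lambda < 0.
\end{equation*}
Since $S \geq 0$ forces $H_F \geq 0$, the essential spectrum lies in $[0,\infty)$ and $N(H_F; (-\infty, \lambda]) = 0$ for all $\lambda<0$. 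Therefore $N\!\big(H; (-\infty, \lambda]\big) = \iD\!\big(\cL, \cF, \cM(\lambda)\big)$, and the left-hand side increases to $n_-(H)$ as $\lambda \to 0-$.

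To identify the limit of the right-hand side, I invoke \Cref{prop:cM_real_s} and \Cref{cor:Mdot}: $\cM(\cdot)$ is monotone increasing on $(-\infty, 0)$ with Lagrangian left-limit $\cM(0-)$. Extending the path continuously through $0$ by setting $\cM(0) := \cM(0-)$ and choosing any increasing continuation for $t > 0$, the left-sided case of \eqref{eq:limit3alt} in \Cref{prop:continuous}, applied with $\cL_0 = \cM(0-)$, $\cL_1 = \cL$, $\cL_2 = \cF$, yields
\begin{equation*}
\lim_{\lambda \to 0-} \iD\!\big(\cL, \cF, \cM(\lambda)\big) = \iD\!\big(\cM(0-), \cL, \cF\big),
\end{equation*}
which gives \eqref{eq:Morse_Duistermaat}. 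For \eqref{eq:BL_formula}, the condition $\dom(H_F) = \ker \Gamma_0$ forces $\cF = \cV$. Since $\cM(0-)$ is the graph of $M_0$, it admits the frame $(I, M_0)$; applying \Cref{prop:BL_index_formula} with $\cL_1 = \cM(0-)$ and $\cL_2 = \cL$ (frame $(P, P\Theta P + P - I)$) then produces $\iD\!\big(\cM(0-), \cL, \cV\big) = n_-(\Theta - PM_0P) = n_+(PM_0P - \Theta)$.

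The main delicacy is the $\lambda \to 0-$ step. When $0 \in \spess(S)$ the space $\cM(0)$ need not be defined, and even when it is, it may differ from $\cM(0-)$ (cf.\ \Cref{rem:cM0}). The argument must therefore route through the left-limit together with the one-sided continuity of $\iD$ in its third argument supplied by \Cref{prop:continuous}; the right-sided formula in that proposition would produce a different expression, so the choice of $\cM(0-)$ rather than any right-hand limit or value at $0$ is precisely what ensures we count the eigenvalues strictly below $0$.
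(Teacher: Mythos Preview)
Your proof is correct and follows essentially the same approach as the paper: apply \Cref{thm:shift_Hormander} with $H_2 = H_F$, use $\iD(\cL,\cF,\cF)=0$ and $H_F\ge 0$ to reduce $\sigma(H,H_F;\lambda)$ to $N(H;(-\infty,\lambda])$, then invoke the monotone left-limit $\cM(0-)$ together with the left-sided case of \eqref{eq:limit3alt} to identify the limit, and finish with \Cref{prop:BL_index_formula}. Your explicit extension of the path through $0$ to justify applying \Cref{prop:continuous} is a detail the paper leaves implicit.
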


\begin{remark}
  \label{rem:assumptions_BL_formula}
  Equation~\eqref{eq:Morse_Duistermaat} generalizes results of
  Derkach--Malamud \cite{DerMal_jfa91} in the setting of finite defect
  numbers.  In particular, \cite[Eq.~(0.4)]{DerMal_jfa91} equates the
  Morse indices of $H$ and the operator $\Delta$ that appears in
  \Cref{prop:diff_of_Krein_terms}; thus under more
  restrictive assumptions one can see that
  \cite[Eq.~(0.4)]{DerMal_jfa91} and \eqref{eq:Morse_Duistermaat} give
  the same expressions for $n_-(H)$.
  However, the basis-independent nature of the Duistermaat index
  (manifested as symplectic invariance
  \eqref{eq:symplectic_invariant}) saves one from superfluous
  restrictions such as the form domain inclusion condition in
  \cite[Thms.~5 and 6]{DerMal_jfa91}. \hfill $\diamond$
\end{remark}

 \begin{remark}
 In the setting of \cite{BehLug_jpa10}, where $\minop^*$ is the Laplacian on a metric graph
  and $\Gamma_0$ and $\Gamma_1$ are the standard Dirichlet and
  Neumann traces, the limit $\cM(0-)$ is the graph of an operator 
  $M_0$, by  \cite[Lem.~3]{BehLug_jpa10}, thus \eqref{eq:BL_formula} holds.  To relate
  this to \cite[Thm.~1]{BehLug_jpa10} we simply take
  $\Theta = -L$ and observe that the boundary condition in \cite{BehLug_jpa10} is written in terms of a
  co-frame rather than a frame (see \Cref{sec:param}). In this setting $M_0$ is
  the Dirichlet-to-Neumann map at
  $\lambda=0$, which is explicitly computable; see also \cite[Sec
  3.5]{BerKuc_graphs}.  \hfill $\diamond$
\end{remark}

%

\begin{proof}
  Since $H_F$ is non-negative, we have $\spess(H_F) \subset [0,\infty)$.
  Therefore, we can apply \Cref{thm:shift_Hormander} with 
  $\cL_2=\cF$ and negative $\lambda$ to obtain
    \begin{equation}
    \label{eq:BL_calculation}
	N\big(H_1; (-\infty,\lambda] \big)
    = \iD\!\big(\cL_1, \cF, \cM(\lambda)\big) - \iD(\cL_1, \cF, \cF) 
    =  \iD\!\big(\cL_1, \cF, \cM(\lambda)\big),
  \end{equation}
  using \eqref{eq:iD_special_cases} to eliminate $\iD(\cL, \cF, \cF)$.
  By \Cref{prop:cM_real_s}, $\cM(\lambda)$ converges to a Lagrangian
  plane $\cM(0-)$ as $\lambda \to 0-$.  Since the convergence is
  monotone, \eqref{eq:limit3alt} gives
  $\iD\!\big(\cL_1, \cF, \cM(\lambda)\big) = \iD\!\big(\cM(0-), \cL_1,
  \cF\big)$ for sufficiently small $\lambda$. It follows from \eqref{eq:BL_calculation} that 
  $N\big(H_1; (-\infty,\lambda] \big)$ is constant for small negative $\lambda$ 
  and hence is equal to $n_-(H_1)$, proving equation~\eqref{eq:Morse_Duistermaat}.

  Combining \eqref{eq:Morse_Duistermaat} with
  \Cref{prop:BL_index_formula}, we obtain
  \begin{equation}
    n_-(H) = \iD\!\big( \cM(0-), \cL, \cF \big)
    = n_-(\Theta - PM_0P) = n_+(PM_0P - \Theta),
  \end{equation}
  since $\cM(0-)$ is the graph of $M_0$ and $\cL$ corresponds to the frame
  $(P,\, P \Theta P + P - I )$.
\end{proof}

\subsection{Comparing Dirichlet and Neumann eigenvalues}

Another easy consequence of our results is a version of Friedlander's well-known 
interlacing formula \cite{Fri_arma91}, which, in our case of finite
defect indices, takes the geometric form of a Duistermaat index.
Defining the \emph{Dirichlet and Neumann extensions} of $\minop$ to be the self-adjoint extensions 
$H_D$ and $H_N$ with
  \begin{equation}
  	\dom(H_D) = \{ f \in \dom(\minop^*) : \Gamma_0 f = 0\}, \quad 
	\dom(H_N) = \{ f \in \dom(\minop^*) : \Gamma_1 f = 0\},
  \end{equation}
  respectively, we get the following.
  
\begin{corollary}
  \label{thm:Friedlander}
  Assume, in addition to the hypotheses of \Cref{thm:main}, 
  that $H_F = H_D$. For any $\lambda \in \R$ below the essential spectrum we have
  \begin{equation}
  \label{eq:Friedlander_formula0}
  \sigma(H_N,H_D;\lambda)
  = \iD\!\big(\cK \oplus 0,\, 0\oplus\cK,\, \cM(\lambda)\big).
\end{equation}
If $\lambda$ is not a Dirichlet eigenvalue, then $\cM(\lambda)$ is the graph of an operator $M(\lambda)$ on $\cK$, and
\begin{equation}
  \label{eq:Friedlander_formula}
  \sigma(H_N,H_D;\lambda) = n_{0+}\big(M(\lambda)\big).
\end{equation}
\end{corollary}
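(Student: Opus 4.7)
The plan is to reduce the statement to a direct application of \Cref{thm:shift_Hormander} and \Cref{cor:DNmap}, after identifying the Lagrangian planes corresponding to $H_N$ and $H_D$. By the definitions of these extensions and \eqref{eq:domain_extension}, the Lagrangian planes are $\cL_N = \cK \oplus 0$ and $\cL_D = 0 \oplus \cK = \cV$. The hypothesis $H_F = H_D$ says precisely that $\cF = \cV$.

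First I would apply \Cref{thm:shift_Hormander} with $H_1 = H_N$ and $H_2 = H_D$ to get
\begin{equation}
  \sigma(H_N, H_D; \lambda) = \iD\!\big(\cK \oplus 0,\, 0 \oplus \cK,\, \cM(\lambda)\big) - \iD\!\big(\cK \oplus 0,\, 0 \oplus \cK,\, \cF\big).
\end{equation}
Since $\cF = 0 \oplus \cK$, the second term vanishes by the special case $\iD(\cL_1, \cL, \cL) = 0$ from \eqref{eq:iD_special_cases}. This immediately yields \eqref{eq:Friedlander_formula0}.

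For the second formula, I would first show that when $\lambda$ is not a Dirichlet eigenvalue, the plane $\cM(\lambda)$ is transversal to $\cV$. Indeed, by \Cref{lem:intersection_dim} applied with $H_j = H_D$,
\begin{equation}
  0 = \dim \ker(H_D - \lambda) = \dim\!\big(\cM(\lambda) \cap \cV\big) + \dim \ker(S - \lambda),
\end{equation}
so in particular $\cM(\lambda) \cap \cV = 0$. This transversality, combined with the fact that $\cM(\lambda)$ is Lagrangian (\Cref{prop:cM_real_s}\eqref{item:complement}), lets us write $\cM(\lambda) = \{(\kappa, M(\lambda)\kappa) : \kappa \in \cK\}$ for a self-adjoint operator $M(\lambda) \colon \cK \to \cK$.

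Finally, I would invoke \Cref{cor:DNmap} with $\widehat{\cK} = \cK$ and $\Theta = M(\lambda)$, so that $\cL_\Theta = \cM(\lambda)$ and $n_{0+}(\Theta) = n_{0+}(M(\lambda))$, giving
\begin{equation}
  \iD\!\big(\cK \oplus 0,\, 0 \oplus \cK,\, \cM(\lambda)\big) = n_{0+}\!\big(M(\lambda)\big).
\end{equation}
Combined with \eqref{eq:Friedlander_formula0}, this yields \eqref{eq:Friedlander_formula}. No serious obstacle arises; the only subtlety is noting that the absence of inner solutions is not required, because \Cref{lem:intersection_dim} controls $\cM(\lambda) \cap \cV$ by $\dim \ker(H_D - \lambda)$ directly.
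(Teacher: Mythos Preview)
Your proof is correct and follows essentially the same approach as the paper: apply \Cref{thm:shift_Hormander} with $\cL_1=\cK\oplus0$, $\cL_2=\cF=0\oplus\cK$, eliminate the second Duistermaat index via \eqref{eq:iD_special_cases}, and then invoke \Cref{cor:DNmap}. You supply slightly more detail than the paper does---in particular the justification via \Cref{lem:intersection_dim} that $\cM(\lambda)\cap\cV=0$ when $\lambda$ is not a Dirichlet eigenvalue---but the argument is otherwise identical.
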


\begin{proof}
Since $H_D$ and $H_N$  correspond to the Lagrangian planes $\cL_D = 0 \oplus \cK$ 
  and $\cL_N = \cK \oplus 0$, the first equation
  follows immediately from \Cref{thm:shift_Hormander}, using \eqref{eq:iD_special_cases} to eliminate
$\iota(\cL_N, \cL_D, \cF)=\iota(\cL_N, \cF, \cF)=0$, and the second follows from \Cref{cor:DNmap} .
\end{proof}

When $\lambda$ is also not a Neumann eigenvalue (and hence $\ker M(\lambda) = 0$), this
is exactly the formula of Friedlander \cite[Lem.~1]{Fri_arma91} (see
also \cite{AreMaz_cpaa12} and references therein) in the context of
finite defect numbers.  Note that the Dirichlet-to-Neumann map in \cite{Fri_arma91} is
$-M(\lambda)$ here, due to the choice of normal derivative\,---\,the abstract Green's identity 
\eqref{eq:Greens_identity} requires $\Gamma_1$ to be the \emph{inward} normal derivative.

\Cref{cor:DNmap} can also be used when $\lambda$ is
a Dirichlet eigenvalue, leading to a more general version of \eqref{eq:Friedlander_formula} in terms of a ``reduced''
Dirichlet-to-Neumann map, as in
\cite{BerCoxHelSun_jga22}.

\subsection{Sharpness of the bounds}
\label{sec:sharp_bounds}

We now prove that the bounds in \eqref{eq:spec_shift_bound} are sharp
for any symmetric operator $\minop$.

\begin{proposition}
  \label{prop:sharp_bounds}
  Let $\minop$ satisfy the assumptions of \Cref{thm:main}. For any
  numbers $\tilde \sigma_\pm \geq 0$ with
  $\tilde\sigma_- + \tilde\sigma_+ \leq n$, there exist Lagrangian
  planes $\cL_1$ and $\cL_2$ such that
  \begin{equation}
    \label{sigmatilde}
    \iD(\cL_1, \cL_2, \cF) = \tilde\sigma_-, \qquad
    \iD(\cL_2, \cL_1, \cF) = \tilde\sigma_+,
  \end{equation}
  and the bounds in \eqref{eq:spec_shift_bound} are sharp 
  for the corresponding extensions $H_1$ and $H_2$ of $\minop$.
  To be precise, there exists $\lambda_0 \in \R$ such that
  \begin{equation}
    \sigma(H_1, H_2; \lambda_0-0) = \tilde\sigma_-, \qquad
    \sigma(H_1, H_2; \lambda_0+0) = \tilde\sigma_+.
  \end{equation}
\end{proposition}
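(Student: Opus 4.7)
The plan is to exhibit, for each admissible $(\tilde\sigma_-, \tilde\sigma_+)$, an explicit pair $\cL_1, \cL_2$ tuned to the Cauchy data plane at a single point $\lambda_0$ below the essential spectrum, so that the spectral shift jumps by the full rank $r := \tilde\sigma_- + \tilde\sigma_+ = n - \dim(\cL_1 \cap \cL_2)$ at $\lambda_0$. By \Cref{thm:shift_Hormander},
\begin{equation*}
\sigma(H_1,H_2;\lambda) = \iD(\cL_1,\cL_2,\cM(\lambda)) - \iD(\cL_1,\cL_2,\cF),
\end{equation*}
so sharpness will follow once $\iD(\cL_1,\cL_2,\cM(\lambda))$ is made to attain both of its extreme values ($0$ and $r$) on the two sides of a single $\lambda_0$.

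The first step is to reduce to the case $\cF = \cV$, which is permissible by \cite[Cor.~5.5.5]{BehHasDeS_boundarytriples} since the proposition asserts only existence and $\iD$, $\sigma$ are independent of the choice of boundary triplet. Since $H_F$ is bounded from below, any $\lambda_0 < \inf\spec(H_F)$ lies in $\rho(H_F)$; for such $\lambda_0$, \Cref{lem:intersection_dim} gives $\cM(\lambda_0) \cap \cV = 0$, so $\cM(\lambda_0)$ is the graph of a Hermitian operator $M_0$ on $\cK$. The construction then proceeds as follows: fix an orthogonal splitting $\cK = W \oplus W^\perp$ with $\dim W = r$, let $P$ be the orthogonal projection onto $W^\perp$, pick a Hermitian $Q_0$ on $W$ with $n_\pm(Q_0) = \tilde\sigma_\pm$ and trivial kernel, extend $Q_0$ by zero to $Q \in \cB(\cK)$, and let $\cL_1, \cL_2$ be the graphs of $A := M_0 + P$ and $B := A + Q$, respectively (both Lagrangian because $A, B$ are Hermitian).

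Since $A$ and $B$ agree with $M_0$ on $W$, differ from $M_0$ invertibly on $W^\perp$, and satisfy $B - A = Q$ (supported on $W$), elementary kernel computations give
\begin{equation*}
\dim(\cL_1 \cap \cL_2) = n - r, \qquad \dim(\cL_1 \cap \cM(\lambda_0)) = r, \qquad \dim(\cL_2 \cap \cM(\lambda_0)) = 0.
\end{equation*}
\Cref{prop:BL_index_formula}, applied with the frame projection equal to $I$ (both planes being graphs), yields $\iD(\cL_1,\cL_2,\cF) = n_-(B - A) = \tilde\sigma_-$ and $\iD(\cL_2,\cL_1,\cF) = n_+(B - A) = \tilde\sigma_+$, confirming \eqref{sigmatilde}. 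A direct application of \Cref{prop:Dui_index} at $\epsilon = 0$ (valid since all three planes are graphs and hence transversal to $\cV$) produces $\iD(\cL_1,\cL_2,\cM(\lambda_0)) = n_-(Q) + n_-(-P-Q) - n_-(-P) = r$. Combined with the one-sided limit identities \eqref{eq:limit3} for the increasing path $\cM(\cdot)$ (\Cref{prop:cM_real_s}, \Cref{cor:Mdot}), this delivers $\iD(\cL_1,\cL_2,\cM(\lambda_0 + 0)) = r$ and $\iD(\cL_1,\cL_2,\cM(\lambda_0 - 0)) = r + 0 - r = 0$; substituting into \eqref{eq:shift_triple} then gives $\sigma(H_1,H_2;\lambda_0 - 0) = -\tilde\sigma_-$ and $\sigma(H_1,H_2;\lambda_0 + 0) = \tilde\sigma_+$, realising both bounds of \eqref{eq:spec_shift_bound} at the same $\lambda_0$.

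The main conceptual hurdle is realising the prescribed Duistermaat indices relative to $\cF$ simultaneously with the maximal-jump condition at $\lambda_0$, which forces $\dim(\cL_1 \cap \cM(\lambda_0)) = r$ while $\cL_2$ stays transversal to $\cM(\lambda_0)$. Decoupling the two requirements through the common summand $M_0 + P$ and the single active operator $Q$ on $W$ is the key trick: the signature of $Q$ controls the indices against $\cF$, while the block structure of $P + Q$ (equal to $Q_0$ on $W$ and $I$ on $W^\perp$, hence invertible) keeps $\cL_2$ transversal to $\cM(\lambda_0)$.
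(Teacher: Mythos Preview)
Your proof is correct and follows essentially the same approach as the paper: pick $\lambda_0$ below $\spess(\minop)$ where $\cM(\lambda_0)$ is a graph of $M_0$, then build $\cL_1,\cL_2$ as graphs of Hermitian perturbations of $M_0$ whose difference has the prescribed signature, and read off the one-sided limits of $\iD(\cL_1,\cL_2,\cM(\cdot))$ at $\lambda_0$ via \Cref{prop:continuous}. The paper's construction is slightly leaner: it simply takes $\cL_1 = \cM(\lambda_0)$ and $\cL_2$ the graph of $M_0 - P_- + P_+$, so that the one-sided limits reduce to the special cases $\iD(\cL_1,\cL_1,\cL_2)=0$ and $\iD(\cL_1,\cL_2,\cL_1)=n-\dim\cL_1\cap\cL_2$ from \eqref{eq:iD_special_cases}, avoiding your extra projection $P$ and the direct Robin-map computation of $\iD(\cL_1,\cL_2,\cM(\lambda_0))$; one small verbal slip in your write-up is that $B$ does \emph{not} agree with $M_0$ on $W$ (only $A$ does), though your subsequent kernel computations are unaffected.
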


\begin{proof}
  Without loss of generality we assume that our boundary triplet is
  chosen such that $\cF = \cV = 0 \oplus \cK$. Now choose $\lambda_0 \in \R$ 
  below the essential spectrum such that
  $\cM(\lambda_0)$ is transversal to $\cV$.  This is
  always possible because $\cM$ is increasing, therefore its intersections
  with any Lagrangian plane are isolated.  Letting $M_0 \colon \cK \to\cK$ be the
  operator whose graph is $\cM(\lambda_0)$, we define $\cL_1$
  and $\cL_2$ via the frames $(I, M_0)$ and $(I, M_0 - P_- + P_+)$, 
  respectively, 
  where $P_-, P_+ \colon \cK\to\cK$ are arbitrary mutually orthogonal orthogonal
  projectors of rank $\tilde\sigma_-$ and $\tilde\sigma_+$. Note that $\cL_1 = \cM(\lambda_0)$.

  We first show that this choice of $\cL_{1}$ and $\cL_2$ gives the
  desired Duistermaat indices.  Using
  \Cref{prop:BL_index_formula} with $P=I$, we get
  \begin{align}
    \label{eq:sharp_bounds}
    \iD(\cL_1, \cL_2, \cF)
    &= n_-\big((M_0 - P_- + P_+) - M_0\big) = \rank P_- = \tilde\sigma_-, \\
    \iD(\cL_2, \cL_1, \cF)
    &= n_-\big( P_- - P_+\big) = \rank P_+ = \tilde\sigma_+,
  \end{align}
which is exactly \eqref{sigmatilde}.

  Now we recall from the proof of \Cref{thm:main}, in particular 
  \eqref{eq:shift_bounds}, that the lower bound in 
  \eqref{eq:spec_shift_bound} is attained at
  some $\lambda$ if $\iD\!\big(\cL_1, \cL_2, \cM(\lambda)\big) = 0$, and 
  the upper bound is attained if $\iD\!\big(\cL_1,
  \cL_2, \cM(\lambda)\big) = n - \dim \cL_1 \cap \cL_2$.  We then use 
  \eqref{eq:iD_special_cases} and \Cref{prop:continuous} to obtain
  \begin{equation}
    \label{eq:sharp_calc1}
    \iD\!\big(\cL_1, \cL_2, \cM(\lambda_0-0)\big)
    = \iD\!\big(\cM(\lambda_0), \cL_1, \cL_2\big)
    = \iD(\cL_1, \cL_1, \cL_2) = 0
  \end{equation}
 and
  \begin{equation}
    \label{eq:sharp_cals2}
    \iD\!\big(\cL_1, \cL_2, \cM(\lambda_0+0)\big)
    = \iD(\cL_1, \cL_2, \cL_1)
    = n - \dim \cL_1 \cap \cL_2,
  \end{equation}
  completing the proof.
\end{proof}

\begin{example}
  \label{ex:non-sharp}
  It is not true that for any $\cL_1$ and $\cL_2$ the bounds of
  \Cref{thm:main} are achieved at some $\lambda$.  A simple
  example is the Neumann versus the Dirichlet Laplacian on the interval
  $(0,\pi)$.  The spectra are $\{0,1,4, 9,\ldots\}$ and
  $\{1,4,9,\ldots\}$, respectively, so the spectral shift
  takes the values $0$ and $1$ while $\smin=0$ and $\smax=2$.
\end{example}

\subsection{An operator with inner solutions}
\label{sec:noUCP}

Defining the space
\begin{equation}
  \mathcal{D} := \Big\{f = (f_1,f_2) \in
  H^2(-\pi,0) \oplus H^2(0,\pi) : f_1(-\pi) = f_2(\pi) = 0, \  f_1(0) = f_2(0) \Big\},
\end{equation}
we consider the symmetric operator $\minop$ acting as $-\frac{d^2}{dx^2}$ on
$L^2(-\pi,0) \oplus L^2(0,\pi)$, with
\begin{equation}
  \label{eq:dom_noUCP1}
  \dom(\minop) := \Big\{ (f_1,f_2) \in \mathcal D : f_1(0) = f_2(0) = 0, \   f_1'(0) = f_2'(0) \Big\}.
\end{equation}
It easily follows that $\dom(\minop^*) = \mathcal D$. As a boundary triple we can take $\cK = \C$ and
\begin{equation}
  \label{eq:triple_noUCP}
  \Gamma_0 f = \frac12 \big(f_1(0) + f_2(0)\big),
  \qquad \Gamma_1 f = f_2'(0) - f_1'(0).
\end{equation}
  
For every $z \in \C \backslash \{1, 4, 9, \ldots\}$ the kernel
\begin{equation}
  \label{eq:ker_mostly}
  \ker(\minop^*-zI) = \Span\left\{\left(
      \frac1{\sqrt{z}} \sin\sqrt{z}(\pi+x),\,
      \frac1{\sqrt{z}} \sin\sqrt{z}(\pi-x) \right) \right\}
\end{equation}
is one dimensional (with $\ker(S^*) = \Span\{(\pi+x, \pi-x)\} $ understood as 
the $z\to0$ limit).  However, when $z = k^2$ for some 
$k\in\N$ the kernel is two dimensional,
\begin{equation}
  \label{eq:ker_inner}
  \ker(\minop^*-k^2I) = \Span \left\{ \big(
      \sin k(\pi+x),\, 0\big),\ 
    \big(0,\ 
      \sin k(\pi-x) \big) \right\}.
\end{equation}
On the other hand, the Cauchy data space from \eqref{eq:CDS_def} is one dimensional
for all $z \in \C$,
\begin{equation}
  \label{eq:M_noUCP} \cM(z) = \Span \left\{ \left(\frac1{\sqrt{z}}
      \sin\pi\sqrt{z},\, {}-2\cos\pi\sqrt{z}\right) \right\},
\end{equation}
since the trace $\tr$ vanishes on the difference of the two basis
vectors in (\ref{eq:ker_inner}) when $z \in \{1,4,9,\ldots\}$.  In
other words, for any $k\in \mathbb{N}$ the function
\begin{equation}
  \label{eq:inner_solution}
  g_k = \big( \sin k(\pi+x),\, 
    - \sin k(\pi-x) \big)
\end{equation}
is an inner solution in the sense of \eqref{c12}.

Consider, for example, the Lagrangian planes
\begin{equation}
  \label{eq:planes_noUCP} \cF = \Span\left\{
    \begin{pmatrix} 0 \\ 1
    \end{pmatrix} \right\}, \qquad \cL_1 = \Span\left\{
    \begin{pmatrix} 1\\0
    \end{pmatrix} \right\},
\end{equation}
and the corresponding extensions of $\minop$,
\begin{align}
  \label{eq:domF_noUCP} \dom(H_F)
  &:= \big\{ f \in \mathcal{D} \colon
    f_1(0) = f_2(0) = 0 \big\}, \\ \dom(H_1)
  &:= \big\{ f \in \mathcal{D}
    \colon f_1'(0) = f_2'(0) \big\}.
\end{align}
As the notation suggests, $H_F$ is the Friedrichs extension of $\minop$.
The corresponding spectra are easily computed to be
\begin{equation}
  \label{eq:spec_noUCP}
  \spec(H_F) = \left\{ k^2 \text{ with
      multiplicity 2} : k\in\N\right\},
  \qquad
  \spec(H_1) = \left\{
    \left(k/2\right)^2 \colon k\in\N\right\}.
\end{equation}
We now see that if one tries to use the intersections
$\cM(z) \cap \cF$ and $\cM(z) \cap \cL_1$ to search for the
eigenvalues of $H_F$ and $H_1$, every second
eigenvalue will be missed, because these are the eigenvalues that
correspond to inner solutions.

\appendix

\section{Lagrangian preliminaries}
\label{app:Lagrangian}

\subsection{The Lagrangian Grassmannian}
Given a complex symplectic space $(\cK \oplus \cK, \omega)$, we say that a 
subspace $\cL \subset \cK \oplus \cK$ is \emph{Lagrangian} if it equals its \emph{symplectic complement}
 \begin{equation}
	\cL^\omega
      \ :=\  \big\{u \in \cK \oplus \cK \colon \omega(u, v) = 0\
        \text{for all } v \in \cL \big\}.
\end{equation}
There are several convenient reformulations of this definition. These involve the operator
\begin{equation}
	J:=\begin{pmatrix}0&I\\-I&0\end{pmatrix}
\end{equation}
on $\cK \oplus \cK$, defined so that $\omega(u,v) = \left<u, Jv\right>_{\cK \oplus \cK}$. 
It follows that $\cL^\omega = (J\cL)^\perp$, thus $\cL$ is Lagrangian if and only if $J\cL = \cL^\perp$, 
where $(\cdot)^\bot$ is the usual orthogonal complement. A useful consequence of this 
is the identity
\begin{equation}\label{PRLS}
	(\cL_1 + \cL_2)^\perp = \cL_1^\perp \cap \cL_2^\perp = J\cL_1 \cap J\cL_2 = J(\cL_1 \cap \cL_2),
\end{equation}
valid for any Lagrangian planes $\cL_1$ and $\cL_2$. This implies $\dim \cL_1 \cap \cL_2 = \codim(\cL_1 + \cL_2)$. 
In particular, $\cL_1 + \cL_2 = \cK \oplus \cK$ if and only if $\cL_1 \cap \cL_2 = 0$.
Another useful fact is that a subspace $\cL$ is Lagrangian if and only if
\begin{equation}
\label{PJ}
	J = JP_{\cL} + P_{\cL} J,
\end{equation}
where $P_{\cL}$ is the orthogonal projection onto $\cL$; see \cite[Prop.~2.11]{Fur_jgp04}.

The set of all Lagrangian subspaces in $(\cK \oplus \cK, \omega)$ is called the \emph{Lagrangian Grassmannian} and is 
denoted by $\Lagr$.

\subsection{Parameterizations of Lagrangian subspaces}
\label{sec:param}
A Lagrangian subspace $\cL \subset \cK \oplus \cK$ can be described in many different ways. For convenience we summarize the most useful ones here.

A \emph{frame} is an injective linear map $Z \colon \cK \to \cK \oplus \cK$ whose range is $\cL$. A frame is typically written in block form
\begin{equation}
\label{Zdef}
	Z = \begin{pmatrix} X \\ Y \end{pmatrix},
\end{equation}
with $X,Y \colon \cK \to \cK$. We often abbreviate this as $(X,Y)$. The range of $Z$ is Lagrangian if and only if the condition $X^*Y = Y^*X$ holds. This frame is not uniquely determined by $\cL$, since for any $C \in \GL(\cK)$, $ZC$ is also a frame for $\cL$. Any frame for $\cL$ is of this form for a suitable choice of $C$.

A closely related notion is that of a \emph{co-frame}, which is a surjective linear map $\cK \oplus \cK \to \cK$ whose kernel is $\cL$; see \cite[Thm.~1.4.4]{BerKuc_graphs}. This can be written in block form as $\big(A \ \, B \big)$, with $A,B \colon \cK \to \cK$. The Lagrangian condition is equivalent to $AB^* = BA^*$, and it is easily seen that the frame $(X,Y)$ corresponds to the co-frame $\big(Y^* \  -\!X^* \big)$. 

It was shown in 
\cite[Cor.~5]{Kuc_wrm04} (see also \cite[Lem.~2]{BehLug_jpa10}) that any Lagrangian plane $\cL$ can be represented by a co-frame with $A = I - P - P\Theta P$ and $B = P$, or equivalently by the frame
\begin{equation}
	\begin{pmatrix} B^* \\ -A^* \end{pmatrix} = \begin{pmatrix} P \\ P\Theta P + P - I \end{pmatrix},
\end{equation}
where $P \colon \cK \to \cK$ is an orthogonal projection and $\Theta$ is a self-adjoint operator on $\Ran P$. 
This parameterization arises naturally when describing boundary conditions: For $f \in \dom(\minop^*)$
we have $\tr f = (\Gamma_0f, \Gamma_1f) \in \cL$ precisely when
\begin{equation}
	(I-P) \Gamma_0 f = 0, \qquad P \Gamma_1 f = \Theta P \Gamma_0 f,
\end{equation}
so we interpret $P$ and $I - P$ as projections onto the Robin and the Dirichlet parts of $\cK$, respectively.

Another possibility is to write $\cL$ as the graph of an operator defined on a reference Lagrangian subspace. Let $\cL^\sharp$ and $\hat\cL$ be transversal Lagrangian subspaces. If $\cL$ is transversal to $\hat\cL$, then there exists an operator $L \colon \cL^\sharp \to \hat\cL$ whose graph is $\cL$, in the sense that
\begin{equation}
	\cL = \big\{ v + Lv : v \in \cL^\sharp \big\}.
\end{equation}
In particular, if $\hat\cL = (\cL^\sharp)^\perp$, we can write $L = JT$ for some $T \in \cB(\cL^\sharp)$, and the Lagrangian condition is equivalent to $T^* = T$. For instance, if $\cL^\sharp = \cK \oplus 0$ and $\hat\cL = 0 \oplus \cK$, then $\cL$ is transversal to $\hat\cL$ if and only if it has a frame $(X,Y)$ for which $X$ is invertible, and hence an equivalent frame
\[
	\begin{pmatrix} X \\ Y \end{pmatrix} \sim \begin{pmatrix} I_\cK \\ Y X^{-1} \end{pmatrix}.
\]
It therefore corresponds to the graph of $L = JT$ with $T = Y X^{-1} \in \cB(\cL^\sharp)$.

Another possibility is to fix a Lagrangian subspace $\cL_0$ and write $\cL$ as the image of an invertible operator $G_\cL \in \cB(\cK \oplus \cK)$. Such an operator can be explicitly constructed as follows. If $(X,Y)$ is a frame for $\cL$, then
\begin{equation}
\label{GXYdef}
	G^{X,Y} = \begin{pmatrix} X & -Y \\ Y & X \end{pmatrix}
\end{equation}
is invertible and maps the horizontal subspace $\cK \oplus 0$ to $\cL$; cf. \cite[Prop.~1]{Pan_rmp06}. Choosing a frame $(X_0,Y_0)$ for $\cL_0$, we see that $G^{X,Y} \big(G^{X_0,Y_0}\big)^{-1}$ is a valid choice of $G_\cL$.

Finally, we recall that $\cL$ can be written as the graph of a unitary operator 
from $\ker(J-i)$ to $\ker (J+i)$. This can be related to the frame description of $\cL$ by decomposing
\begin{equation}
	\begin{pmatrix} Xu \\ Yu \end{pmatrix} = \underbrace{\frac12 \begin{pmatrix} (X-iY)u \\ (Y + iX)u \end{pmatrix}}_{\in\, \ker(J-i)} 
	+ \underbrace{\frac12 \begin{pmatrix} (X+iY)u \\ (Y - iX)u \end{pmatrix}}_{\,\in \ker(J+i)}
\end{equation}
for arbitrary $u \in \cK$. The desired unitary map is
\begin{equation}
\label{Lunitary}
	\begin{pmatrix} a \\ ia \end{pmatrix} = \frac12 \begin{pmatrix} (X-iY)u \\ (Y + iX)u \end{pmatrix}
	\mapsto
	\frac12 \begin{pmatrix} (X+iY)u \\ (Y - iX)u \end{pmatrix} = \begin{pmatrix} b \\ -ib \end{pmatrix},
\end{equation}
therefore $b = (X+iY)(X-iY)^{-1}a$. (This agree with the formula in \cite[Prop.~1]{Sch_laa12}, since the $\mathcal J$ used there coincides with $-J$ in this paper.) This parameterization will be used in \Cref{sec:path_indices} to define the Maslov index.

\subsection{Smooth structure on the Lagrangian Grassmannian}
\label{app:smooth}
We now recall the smooth structure on $\Lambda$ and give some equivalent formulations for the differentiability of a path $\cL(\cdot) \colon (0,1) \to \Lambda$, in terms of the different parameterizations given above.

To prove that $\Lambda$ is a smooth manifold, one first shows that it is a topological manifold, and then equips it 
with a smooth atlas of coordinate charts. The topology is given by the \emph{gap metric}, $d(\cL_1,\cL_2) := \|P_{\cL_1} - P_{\cL_2} \|$, where $\cL_j \in \Lambda$ and $P_{\cL_j} \in \cB(\cK \oplus \cK)$ are the corresponding orthogonal projections.

Next, given a Lagrangian subspace $\cL$, we let $U_{\cL^\perp}$ denote the set of Lagrangian subspaces that are transversal to $\cL^\perp = J\cL$. This is an open neighborhood of $\cL$ in $\Lambda$, and is homeomorphic to the set $\cB_{\rm sa}(\cL)$ of self-adjoint operators on $\cL$, via the map
\begin{equation}
\label{coordinate}
	A \mapsto \gr_\cL(A) := \{v + JAx : v \in \cL\}.
\end{equation}
See \cite[Prop.~2.21]{Fur_jgp04} for details. It follows that $\Lambda$ is a topological manifold. Moreover, it can be shown that the ``transition functions" are smooth on any intersection $U_{\cL_1^\perp} \cap U_{\cL_2^\perp}$ of coordinate charts. That is, referring to the map $\gr_\cL \colon  \cB_{\rm sa}(\cL) \to \Lambda$ defined in \eqref{coordinate}, we have that the composition
\[
	\gr_{\cL_2}^{-1} \circ \gr_{\cL_1} \colon \ \underbrace{\gr_{\cL_1}^{-1} \big(U_{\cL_1^\perp} \cap U_{\cL_2^\perp}\big)}_{\subseteq \,\cB_{\rm sa}(\cL_1)}  \longrightarrow 
	 \underbrace{\gr_{\cL_2}^{-1} \big(U_{\cL_1^\perp} \cap U_{\cL_2^\perp}\big)}_{\subseteq \,\cB_{\rm sa}(\cL_2)}
\]
is $C^\infty$; see, for instance, \cite[Cor.~2.25]{Fur_jgp04}. This gives $\Lagr$ the structure of a smooth manifold.

Differentiability of a path $\cL(t)$ of Lagrangian subspaces is
defined with respect to this manifold structure.  However, it is often
easier to work with the following equivalent conditions, which are in
fact taken as definitions in some papers.

\begin{theorem}
Given a path $\cL(\cdot): (0,1) \to \Lambda$, the following are equivalent:
\begin{enumerate}
	\item $\cL(t)$ is differentiable;
	\item the family of orthogonal projections $P_{\cL(t)}$ is differentiable;
	\item there exists a differentiable family of invertible
          operators $G_t$ on $\cK \oplus \cK$ such that, for some
          $\cL_0$, $G_t \cL_0 = \cL(t)$;
        \item there exists a differentiable frame $Z(t)$ for $\cL(t)$;
	\item there exists a differentiable family of unitary operators $U_t \colon \ker(J-i) \to \ker(J+i)$ such that 
	$\cL(t) = \{v + U_t v : v \in \ker(J-i)\}$.
\end{enumerate}
\end{theorem}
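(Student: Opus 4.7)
The plan is to establish the equivalences via the cycle $(2) \Rightarrow (3) \Rightarrow (4) \Rightarrow (2)$, together with the separate equivalences $(1) \Leftrightarrow (2)$ and $(4) \Leftrightarrow (5)$. All statements are local in $t$, so it suffices to work in a neighborhood of an arbitrary fixed $t_0 \in (0,1)$.

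For $(2) \Rightarrow (3)$, I would mimic the Kato transformation function construction used in Step four of the proof of \Cref{thm:analyticM_general}. Writing $P_t := P_{\cL(t)}$ and fixing $t_0$, set
\begin{equation}
  G_t := (I - P_t)(I - P_{t_0}) + P_t P_{t_0} = I + (2P_t - I)(P_{t_0} - P_t).
\end{equation}
The right-hand expression shows $G_{t_0} = I$ and that $G_t$ is invertible for $t$ near $t_0$, while the left-hand expression gives $G_t P_{t_0} = P_t G_t$, and therefore $G_t \cL(t_0) = \cL(t)$. Differentiability of $G_t$ follows from that of $P_t$. For $(3) \Rightarrow (4)$, simply pick any frame $Z_0$ for $\cL_0$ and put $Z(t) := G_t Z_0$; this is a differentiable frame for $\cL(t)$. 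For $(4) \Rightarrow (2)$, use the standard formula $P_{\cL(t)} = Z(t)\bigl(Z(t)^* Z(t)\bigr)^{-1} Z(t)^*$, which is differentiable whenever $Z(t)$ is.

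For $(1) \Leftrightarrow (2)$, work in the chart $\gr_{\cL_0}$ around any point $\cL_0 = \cL(t_0)$. By definition $(1)$ says that the self-adjoint operator $A(t) \in \cB_{\mathrm{sa}}(\cL_0)$ with $\cL(t) = \gr_{\cL_0}\bigl(A(t)\bigr)$ is differentiable, and then $\cL(t)$ admits the differentiable frame $v \mapsto v + JA(t)v$, from which $(2)$ follows by the argument above. Conversely, given a differentiable family of projections $P_t$, one recovers $A(t)$ by the algebraic formula $A(t)v = J^{-1}(I - P_{\cL_0})(2P_t - I)v$ for $v \in \cL_0$, which is differentiable; hence $(1)$ holds.

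For $(4) \Leftrightarrow (5)$, I would use the correspondence \eqref{Lunitary} from \Cref{sec:param}. Given a differentiable frame $(X(t), Y(t))$, the formula
\begin{equation}
  U_t = (X(t) + iY(t))(X(t) - iY(t))^{-1}
\end{equation}
defines the required unitary, and differentiability is preserved since $X(t) - iY(t)$ remains invertible (indeed $(X,Y)$ having rank $n$ is equivalent to invertibility of $X \pm iY$). Conversely, given a differentiable $U_t$, the map $v \mapsto v + U_tv$ on $\ker(J-i) \cong \cK$ is a differentiable frame for $\cL(t)$.

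The main obstacle is the passage $(1) \Leftrightarrow (2)$, since $(1)$ is defined abstractly via the coordinate atlas $\{\gr_{\cL_0}\}$ on $\Lambda$ while $(2)$ concerns the ambient operator norm on $\cB(\cK \oplus \cK)$; here one must verify that the chart map $A \mapsto P_{\gr_{\cL_0}(A)}$ and its (locally defined) inverse are both smooth. The closed-form expressions above make this routine, but care is needed to check that the formula for $A$ in terms of $P$ lands in $\cB_{\mathrm{sa}}(\cL_0)$ and matches the original chart parameterization. Everything else is a straightforward application of the product and inverse rules for differentiable families of bounded operators.
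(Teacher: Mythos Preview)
Your overall architecture is sound and close to the paper's: the paper runs the cycle $(1)\Rightarrow(2)\Rightarrow(3)\Rightarrow(4)\Rightarrow(1)$ and handles $(4)\Leftrightarrow(5)$ separately, whereas you close the loop $(2)\Rightarrow(3)\Rightarrow(4)\Rightarrow(2)$ and treat $(1)\Leftrightarrow(2)$ on its own. Two minor differences worth noting: for $(2)\Rightarrow(3)$ the paper solves the ODE $G_t' = P_t'(2P_t - I)G_t$ (giving a global unitary family), while your Kato-type formula $G_t = (I-P_t)(I-P_{t_0}) + P_tP_{t_0}$ is simpler and perfectly adequate locally; and the paper avoids your $(4)\Rightarrow(2)$ projection formula by going directly $(4)\Rightarrow(1)$, extracting $A(t)$ from the frame via $A(t) = -J(I-P_0)Z(t)\bigl(P_0 Z(t)\bigr)^{-1}$.

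There is, however, a genuine error in your $(2)\Rightarrow(1)$ step: the formula $A(t)v = J^{-1}(I-P_{\cL_0})(2P_t - I)v$ is wrong. Test it in $\mathbb{C}^2$ with $\cL_0 = \mathbb{C}\oplus 0$ and $\cL(t) = \{(x,tx)\}$: the correct $A(t)$ sends $(x,0)$ to $(-tx,0)$, but your formula gives $\bigl(-\tfrac{2t}{1+t^2}x,\,0\bigr)$. The problem is that recovering $A(t)$ from $P_t$ requires an inversion: writing $B = JA(t)\colon \cL_0 \to \cL_0^\perp$, one has
\[
  B = \bigl((I-P_{\cL_0})P_t\big|_{\cL_0}\bigr)\bigl(P_{\cL_0}P_t\big|_{\cL_0}\bigr)^{-1},
\]
and the second factor (which is $I$ at $t=t_0$) cannot be dropped. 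This corrected formula is still visibly smooth in $P_t$, so your implication survives; but the ``algebraic formula'' you wrote down does not match the chart, which is exactly the verification you flagged as needing care. Alternatively, you can bypass this entirely by following the paper and going $(4)\Rightarrow(1)$ directly from the frame.
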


\begin{proof}
$(1) \Rightarrow (2)$. Suppose $\cL(t)$ is differentiable. In a neighborhood of an arbitrary point $t_0$, $\cL(t)$  is given by $\cL(t) = \{x + J A(t) x : x \in \cL(t_0)\}$ for some differentiable family $A(t)$. Using \cite[Eq.~(2.16)]{Fur_jgp04}, we see that the orthogonal projection $P_{\cL(t)}$ is differentiable.

$(2) \Rightarrow (3)$. We fix an arbitrary $t_0 \in (0,1)$ and then, following \cite[Thm.~IV.1.1]{DaletskiiKrein},
define $G_t$ as the solution to the differential equation
\begin{equation}
  \label{eq:Gt_diffeq}
  \frac{d}{dt} G_t = P_{\cL(t)}'\left(2P_{\cL(t)} - I\right) G_t,
  \qquad G_{t_0}=I.
\end{equation}
Choosing $\cL_0 = \cL(t_0)$, we find that $G_t$ has the desired property (and in addition is unitary).
%


$(3) \Rightarrow (4)$. Defining $G^{X_0,Y_0}$ as in \eqref{GXYdef}, we see that $G_t  G^{X_0,Y_0}$ is differentiable and maps $\cK \oplus 0$ onto $\cL(t)$. Writing this in block form
\[
	G_t  G^{X_0,Y_0} = \begin{pmatrix} X(t) & \ast \\ Y(t) & \ast \end{pmatrix}
\]
in the decomposition $\cK \oplus \cK$, it follows that $\big(X(t),Y(t) \big)$ is a differentiable frame for $\cL(t)$.

$(4) \Rightarrow (1)$. Let $t_0 \in (0,1)$. Using the frame $Z(t)$, any point $v \in \cL(t)$ can be written as
\[
	v = Z(t)u = P_0 Z(t)u + (I - P_0)Z(t)u
\]
for some $u \in \cK$, where $P_0 = P_{\cL(t_0)}$. Since the frame $Z(t_0) \colon \cK \to \cK \oplus \cK$ is injective and has range $\cL(t_0)$, the map $C(t) := P_0 Z(t) \colon \cK \to \cL(t_0)$ is invertible at $t = t_0$, and therefore is invertible for $t$ sufficiently close to $t_0$. It follows that any $v \in \cL(t)$ can be written as
\[
	v = w + (I - P_0) Z(t) C(t)^{-1} w
\]
for some $w \in \cL(t_0)$.
This show that $\cL(t)$ is the graph of the differentiable family $A(t) = -J (I - P_0) Z(t) C(t)^{-1} \colon \cL(t_0) \to \cL(t_0)$, and hence is differentiable at $t_0$.

$(4) \Leftrightarrow (5)$ This is an immediate consequence of \eqref{Lunitary}.
\end{proof}

\subsection{Maslov indices for Lagrangian paths}
\label{sec:path_indices}

For a continuous family $\cM(\cdot):[a,b] \to \Lambda$ of Lagrangian planes, we can represent each  
$\cM(t)$ as the graph of a unitary operator $U_\cM(t) \colon \ker(J - i) \to \ker(J + i)$. 
Doing the same for $\cL(t)$, we obtain a unitary family
\begin{equation}
	W(t) := U_\cM(t) \big(U_\cL(t)\big)^{-1}
\end{equation}
on $\ker(J + i)$ such that $\dim\ker \!\big(W(t) - 1\big) = \dim \!\big(\cM(t) \cap \cL(t)\big)$; see 
\cite[Lem.~2]{BoossZhu2013}. The \emph{Maslov index of $\cM(\cdot)$ with respect to $\cL(\cdot)$} is defined to be the spectral flow of $W(t)$ 
through the point 1 on the unit circle, in the counterclockwise direction. More precisely,
\begin{equation}
	\Mas_{[a, b]}\!\big(\cM(\cdot), \cL(\cdot) \big) := \sum_{j=1}^n \left( \ceil*{\frac{\theta_j(b)}{2\pi}} - \ceil*{\frac{\theta_j(a)}{2\pi}}  \right),
\end{equation}
where $\lceil \cdot \rceil$ denotes the ceiling and $\theta_1, \ldots, \theta_n \colon [a,b] \to \R$ 
are continuous functions such that  $e^{i \theta_1(t)}, \ldots, e^{i \theta_n(t)}$ are the eigenvalues of $W(t)$.
See \cite[\S2.2]{BooZhu_memAMS} or \cite[\S2]{ZhoWuZhu_fmc18} for details.

\begin{remark}
We follow the conventions 
and notation of \cite{ZhoWuZhu_fmc18} so we can directly use their 
formula \eqref{eq:Hor_ZWZ0} relating the Duistermaat and Maslov indices.
Compared to the Maslov index defined by Cappell, Lee and Miller in 
\cite{CapLeeMil_cpam94}, we have
\begin{equation}
\label{Mas_CLM}
	\Mas^{\rm CLM}_{[a, b]}\!\big(\cM(\cdot), \cL(\cdot) \big) = \Mas_{[a, b]}\!\big(\cL(\cdot), \cM(\cdot) \big),
\end{equation}
see the remark in \cite[Def.~A.9]{BarOffPorWu_mz21}. On the other hand, this is related to the Maslov index defined by Robbin and Salamon in \cite{RobSal_t93} by 
\begin{equation}
	\Mas^{\mathrm{RS}}_{[a,b]}\!\big(\cM(\cdot),\cL(\cdot)\big) = \Mas_{[a,b]}\!\big(\cM(\cdot),\cL(\cdot)\big)  + \tfrac12 h(b) - \tfrac12 h(a),
\end{equation}
where we have abbreviated $h(t) := \dim\! \big(\cL(t)\cap\cM(t)\big)$; 
see \cite[Eq.~(A.7)]{BarOffPorWu_mz21}. \hfill $\diamond$
\end{remark}

Comparing \eqref{Mas_CLM} with \cite[Eq.~(1.12)]{CapLeeMil_cpam94}, 
we obtain
\begin{equation}
  \label{eq:CLM_antisymmetric}
  \Mas_{[a,b]}\!\big(\cL(\cdot),\cM(\cdot)\big)
  = -\Mas_{[a,b]}\!\big(\cM(\cdot),\cL(\cdot)\big)  + h(a) - h(b).
\end{equation}
That is, the Maslov index is antisymmetric up to boundary terms.

We now explain how to compute the Maslov index, assuming for the rest of the section that $\cL(t) = \cL$ is constant 
and $\cM(t)$ is differentiable. We say $t_0$ is a \emph{crossing} if $\cM(t_0) \cap \cL \neq 0$. The associated \emph{crossing form} is $\mathfrak m_{t_0} := \mathfrak q|_{\cM(t_0)\cap\cL}$, where $\mathfrak q$ is the form on $\cM(t_0)$ defined by \eqref{Q:def}. A crossing $t_0$ is \emph{regular} if the form $\mathfrak m_{t_0}$ is non-degenerate. For a $C^1$ path $\cM(\cdot)$ with only regular crossings on $[a,b]$, the Maslov index with respect to $\cL$ is then given by
\begin{equation}\label{MIforms}
	\Mas_{[a,b]}\!\big(\cM(\cdot),\cL\big) = n_+(\mathfrak{m}_{a})+\sum_{t_0\in(a,b)}\big(n_+(\mathfrak{m}_{t_0})-n_-(\mathfrak{m}_{t_0})\big)-n_-(\mathfrak{m}_{b}).
\end{equation}
Regular crossings of a $C^1$ path are isolated, so the sum over $t_0$ is finite. 
For complex symplectic spaces \eqref{MIforms} was proved in \cite[Prop.~3.27]{BooZhu_memAMS}. For real spaces this method of computing the Maslov index first appeared in \cite{RobSal_t93}. In this paper we only require the special case when $\cM(\cdot)$ is an increasing path, hence all crossing forms are positive definite; see \eqref{eq:Maslov_intersections}.


\bibliography{bk_bibl, mybib_analysis_prop}

 \bibliographystyle{siam}


\end{document}